\newcommand{\bydef}{:=}
\newcommand{\bg}{{\overline{g}}}
\newcommand{\bt}{{\overline{t}}}
\newcommand{\bG}{{\overline{G}}}
\newcommand{\tV}{{\widetilde{V}}}
\newcommand{\tW}{{\widetilde{W}}}
\newcommand{\tB}{{\widetilde{B}}}
\newcommand{\bT}{{\overline{T}}}
\newcommand{\ov}{\overline}
\newcommand{\wh}[1]{\widehat{#1}}
\newcommand{\wt}[1]{\widetilde{#1}}
\newcommand{\vphi}{\varphi}
\newcommand{\veps}{\varepsilon}
\newcommand{\ve}{\varepsilon}
\newcommand{\vp}{\varphi}
\newcommand{\ld}{\ldots}
\newcommand{\sgn}{\mathrm{sgn}}
\DeclareMathOperator*{\ot}{\otimes}
\newcommand{\ontop}[2]{\genfrac{}{}{0pt}{}{#1}{#2}}
\newcommand{\sks}{\mathcal{K}}
\newcommand{\id}{\mathrm{id}}
\newcommand{\cA}{\mathcal{A}}
\newcommand{\cC}{\mathcal{C}}
\newcommand{\cD}{\mathcal{D}}
\newcommand{\cH}{\mathcal{H}}
\newcommand{\cL}{\mathcal{L}}
\newcommand{\cM}{\mathcal{M}}
\newcommand{\cQ}{\mathcal{Q}}
\newcommand{\cR}{\mathcal{R}}
\newcommand{\cS}{\mathcal{S}}
\newcommand{\cU}{\mathcal{U}}
\newcommand{\frA}{\mathfrak{A}}
\newcommand{\frB}{\mathfrak{B}}
\newcommand{\frC}{\mathfrak{C}}
\newcommand{\frF}{\mathfrak{F}}
\newcommand{\NN}{\mathbb{N}}
\newcommand{\RR}{\mathbb{R}}
\newcommand{\CC}{\mathbb{C}}
\newcommand{\FF}{\mathbb{F}}
\newcommand{\chr}[1]{\mathrm{char}\,#1}
\newcommand{\Homgr}{\mathrm{Hom}^\mathrm{gr}}
\DeclareMathOperator{\End}{\mathrm{End}}
\newcommand{\Endgr}{\mathrm{End}^\mathrm{gr}}
\DeclareMathOperator{\Aut}{\mathrm{Aut}}
\newcommand{\brac}[1]{{#1}^{(-)}}
\newcommand{\Sl}{\mathfrak{sl}}
\newcommand{\fsl}{\mathfrak{fsl}}
\newcommand{\So}{\mathfrak{so}}
\newcommand{\fso}{\mathfrak{fso}}
\newcommand{\Sp}{\mathfrak{sp}}
\newcommand{\fsp}{\mathfrak{fsp}}
\newtheorem{theorem}{Theorem}[section]
\newtheorem{lemma}[theorem]{Lemma}
\newtheorem{corollary}[theorem]{Corollary}
\newtheorem{proposition}[theorem]{Proposition}
\theoremstyle{definition}
\newtheorem{remark}[theorem]{Remark}
\newtheorem{df}[theorem]{Definition}
\newcommand{\gr}{\mathrm{gr}}
\newcommand{\CF}[2]{\mathfrak{F}_{#2}^\gr(#1)}
\newcommand{\CL}[2]{\mathfrak{L}_{#2}^\gr(#1)}
\newcommand{\fvw}{\CF{V}{W}}
\newcommand{\lvw}{\CL{V}{W}}
\newcommand{\fpu}{\mathfrak{F}_\Pi(U)}
\newcommand{\Q}{\mathfrak{Q}}
\newcommand{\R}{\mathfrak{R}}
\begin{document}

\title{Group gradings on finitary simple Lie algebras}

\author[Bahturin]{Yuri Bahturin}
\address{Department of Mathematics and Statistics, Memorial
University of Newfoundland, St. John's, NL, A1C5S7, Canada}
\email{bahturin@mun.ca}

\author[Bre\v{s}ar]{Matej Bre\v{s}ar}
\address{Faculty of Mathematics and Physics, University of Ljubljana, Slovenia, and 
Faculty of Natural Sciences and Mathematics, University of Maribor,  Slovenia}
\email{matej.bresar@fmf.uni-lj.si}

\author[Kochetov]{Mikhail Kochetov}
\address{Department of Mathematics and Statistics, Memorial
University of Newfoundland, St. John's, NL, A1C5S7, Canada}
\email{mikhail@mun.ca}

\thanks{{\em Keywords:} graded algebra, simple Lie algebra, grading, primitive algebra, functional identity}
\thanks{{\em 2000 Mathematics Subject Classification:} Primary 17B70; Secondary 16R60, 16W50, 17B60.}
\thanks{The first author acknowledges support by by NSERC grant \# 227060-09. The second author acknowledges support by ARRS grant \#  P1-0288. The third author acknowledges support by NSERC grant \# 341792-07. We all are thankful to the referee for sharing with us some interesting ideas.}

\begin{abstract}
We classify, up to isomorphism, all gradings by an arbitrary abelian group on simple finitary Lie algebras of linear transformations (special linear, orthogonal and symplectic) on infinite-dimensional vector spaces over an algebraically closed field of characteristic different from $2$.
\end{abstract}

\maketitle


\section{Introduction}\label{intro}

The paper \cite{BZ10}, which appeared several years after it was written, describes all gradings by a finite abelian group $G$  on the associative algebra $\cR=M_\infty(\FF)$ of (countably) infinite matrices with finitely many nonzero entries from an algebraically closed field $\FF$ of characteristic zero. The ultimate goal was to apply these results in a later paper in order to describe all gradings by finite abelian groups on the Lie algebra $\cL$ isomorphic to one of $\Sl(\infty)$, $\So(\infty)$ and $\Sp(\infty)$. 

The tool that, in principle, allowed a reduction of $G$-gradings on $\cL$ to $G$-gradings on $\cR$ was developed in \cite{BB} and involved the so called functional identities (see \cite{FIbook}). However, for an arbitrary abelian group $G$, the results in \cite{BB} allowed such reduction only for unital associative algebras, and $\cR$ is not such. Hence, a considerable portion of this paper (Section \ref{sFILH}) is devoted to extending the results of \cite{BB} to the non-unital case, thus opening the road to the gradings on infinite-dimensional simple Lie algebras.

At the same time, using Hopf algebra approaches, \cite{BKM} and \cite{BK} classified, up to isomorphism, all gradings by an abelian group $G$ on finite-dimensional simple Lie algebras of types $A$, $B$, $C$ and $D$ (except $D_4$) over algebraically closed fields of characteristic different from $2$. Also, a closer inspection of the older sources like \cite{JSR}, \cite{NVO82} and \cite{NVO} made it possible to find a smooth and conceptual approach to gradings on matrix algebras and classical Lie algebras of the said types (to appear in the monograph \cite{EK}).

Based on all these results and ideas, in this paper we not only achieve the goal stated at the beginning, but rather obtain the complete classification, up to isomorphism, of gradings by an arbitrary abelian group $G$ on simple Lie algebras of linear operators of finite rank (special linear, orthogonal and symplectic) on an infinite-dimensional vector space over an algebraically closed field of characteristic different from $2$.

An infinite-dimensional simple Lie algebra over a field $\FF$ is called {\em finitary} if it can be represented by linear operators of finite rank. A complete classification of finitary simple Lie algebras was given in \cite{Bar} over any $\FF$ with $\chr{\FF}=0$ and in \cite{BaSt} over algebraically closed $\FF$ with $\chr{\FF}\ne 2,3$. Under the latter assumption, finitary simple Lie algebras over $\FF$ can be described in the following way. Let $U$ be an infinite-dimensional vector space over $\FF$. Let $\Pi\subset U^\ast$ be a {\em total} subspace, i.e., for any $v\ne 0$ in $U$ there is $f\in \Pi$ such that $f(v)\ne 0$. Let $\mathfrak{F}_\Pi(U)$ be the space spanned by the linear operators of the form $v\ot f$, $v\in U$ and $f\in \Pi$, defined by $(v\ot f)(u)=f(u)v$ for all $u\in U$. It is known from \cite{JSR} that $\cR\bydef\mathfrak{F}_\Pi(U)$ is a (non-unital) simple associative algebra. The commutator $[\cR,\cR]$ is a simple Lie algebra, which is denoted by $\fsl(U,\Pi)$. The algebra $\cR$ admits an ($\FF$-linear) involution if and only if there is a nondegenerate bilinear form $\Phi\colon U\times U\to\FF$ that identifies $U$ with $\Pi$ and that is either symmetric or skew-symmetric. The set of skew-symmetric elements with respect to this involution, i.e., the set of $r\in\cR$ satisfying $\Phi(ru,v)+\Phi(u,rv)=0$ for all $u,v\in U$, is a simple Lie algebra, which is denoted by $\fso(U,\Phi)$ if $\Phi$ is symmetric and $\fsp(U,\Phi)$ if $\Phi$ is skew-symmetric. In \cite{BaSt} it is shown that if $\FF$ is algebraically closed and $\chr{\FF}\ne 2,3$, then any finitary simple Lie algebra over $\FF$ is isomorphic to one of $\fsl(U,\Pi)$, $\fso(U,\Phi)$ or $\fsp(U,\Phi)$. The most important special case is that of countable (infinite) dimension. Then $U$ has countable dimension and the isomorphism class of the Lie algebra does not depend on $\Pi$ or $\Phi$. Hence, there are exactly three finitary simple Lie algebras of countable dimension: $\Sl(\infty)$, $\So(\infty)$ and $\Sp(\infty)$.

As we already stated, our goal is to classify all $G$-gradings on finitary simple Lie algebras. Following a number of previous papers, including \cite{BShZ, BZ06, BZ07, BKM, E, BK}, we try to obtain all $G$-gradings on a simple Lie algebra $\cL$ from $G$-gradings on a related associative algebra $\cR$. Here $\cL$ will be one of the algebras $\fsl(U,\Pi)$, $\fso(U,\Phi)$ or $\fsp(U,\Phi)$ described above, and the related associative algebra will be $\cR=\mathfrak{F}_\Pi(U)$ (with $\Pi=U$ in the last two cases). It is convenient to start with a more general setting --- namely, where $\cR$ is a primitive algebra with minimal one-sided ideals (see \cite[Chapter IV]{JSR}). In the case of finite-dimensional algebras over an arbitrary field $\FF$, these are just matrix algebras with coefficients in a finite-dimensional division algebra $\Delta$ (in particular, $\Delta=\FF$ if $\FF$ is algebraically closed). In general, there are more possibilities for $\cR$. As shown in \cite[IV.9]{JSR}, $\cR$ is a primitive algebra with minimal one-sided ideals if and only if it is isomorphic to a subalgebra of $\mathfrak{L}_\Pi(U)$ containing the ideal $\mathfrak{F}_\Pi(U)$ where $U$ is a right vector space over a division algebra $\Delta$, $\Pi$ is a total subspace of the left vector space $U^\ast$ over $\Delta$, $\mathfrak{L}_\Pi(U)$ is the algebra of all continuous $\Delta$-linear operators on $U$, and  $\mathfrak{F}_\Pi(U)$ is the set of all operators in $\mathfrak{L}_\Pi(U)$ whose image has finite dimension over $\Delta$. The term {\em continuous} refers here to the topology on $U$ with a neighbourhood basis at $0$ consisting of the sets of the form $\ker f_1\cap\ldots\cap\ker f_k$ where $f_1,\ldots,f_k\in \Pi$ and $k\in \NN$. A linear operator $A\colon U\to U$ is continuous with respect to this topology if and only if the adjoint operator $A^\ast\colon U^\ast\to U^\ast$ leaves the subspace $\Pi$ invariant. (In particular, if $\Delta$ is $\RR$ or $\CC$, $U$ is a Banach space and $\Pi$ consists of all bounded linear functionals on $U$, then a linear operator on $U$ is continuous in our sense if and only if it is bounded.)

In Section \ref{sSTGPA}, we will show that if an algebra $\cR$ as above is given a grading by a group $G$, then it becomes a graded primitive algebra with minimal one-sided graded ideals and hence there exists a graded division algebra $\cD$, a graded right vector space $V$ over $\cD$ and a total graded subspace $W$ of the graded dual $V^{\gr\ast}$ such that $\fvw\subset\cR\subset\lvw$ (Theorem \ref{theorem_graded_primitive}). Here, a {\em graded division algebra} is a $G$-graded unital associative algebra whose nonzero homogeneous elements are invertible, a {\em graded vector space} is a graded module over a graded division algebra (since such a module is necessarily free), $\lvw$ is the $\FF$-span of all homogeneous $\cD$-linear operators $A\colon V\to V$ such that $A^\ast(W)\subset W$, and $\fvw$ is the subset of operators whose image has finite dimension over $\cD$. Assuming $\FF$ algebraically closed, this result will lead us to a classification of $G$-gradings on $\fpu$ --- see Theorem \ref{lfd} and, for abelian $G$, Corollary \ref{lfd_abelian_G}. In the case of $M_\infty(\FF)$, the classification can be expressed in combinatorial terms --- see Corollary \ref{gradings_on_M_infinity}, which generalizes Theorems 5 and 6 from \cite{BZ10}.

In the study of gradings on $\cL=\fso(U,\Phi)$ or $\fsp(U,\Phi)$, we will need to classify involutions of the algebra $\fpu$ that preserve a grading. Also, in the study of what we call ``Type II'' gradings on $\cL=\fsl(U,\Pi)$, we will need to classify more general antiautomorphisms of $\fpu$. In these cases, $\fpu$ can be written as $\fvw$ where $W$ is identified with $V$ (up to a shift of grading) by means of a homogeneous sesquilinear form $B\colon V\times V\to\cD$. The classification of antiautomorphisms that we need is obtained by classifying such forms (Theorem \ref{lfd_2}).  

In Section \ref{sFILH}, we will use functional identities to describe any surjective Lie homomorphism $\rho$ from a Lie ideal of an associative algebra to $\cL\ot\cH$ (Theorem \ref{T1}) where $\cL$ is a noncentral ideal of $\brac{\cA}$, $\cA$ is an algebra of linear operators containing $\fpu$, and $\cH$ is a unital commutative associative algebra. Here we have used this  standard notation: given an associative algebra $\cU$, we denote by $\brac{\cU}$ the Lie algebra structure on $\cU$ defined by the commutator $[a,b]=ab-ba$. 

In the same section, we will also describe any surjective Lie homomorphism $\rho$ from an ideal of the Lie algebra of skew-symmetric elements in an associative algebra with involution to $\cL\ot\cH$ (Theorem \ref{T2}) where $\cL$ is a noncentral ideal of $\sks(\cA,\vphi)$, $\cH$ and $\cA$ are as above, $\vphi$ is an involution on $\cA$, and $\sks(\cA,\vphi)$ is the subalgebra of $\brac{\cA}$ consisting of all skew-symmetric elements with respect to $\vphi$. 

In Section \ref{sLie}, we will take $\cH$ to be the group algebra of the grading group $G$ and $\rho$ to be the extension $\cL\ot\cH\to\cL\ot\cH$ of a comodule algebra structure $\cL\to\cL\ot\cH$ by $\cH$-linearity. This will allow us to establish a link between $G$-gradings on the Lie algebra $\cL$ and those on the associative algebra $\cR$ where $\cL$ is one of $\fsl(U,\Pi)$, $\fso(U,\Phi)$ and $\fsp(U,\Phi)$ and $\cR=\fpu$. The classification of $G$-gradings on $\fsl(U,\Pi)$ is given by Theorem \ref{tmsla} and those on $\fso(U,\Phi)$ and $\fsp(U,\Phi)$ by Theorem \ref{tsosp}. In the case where $\cL$ has countable dimension, the classification can be stated in purely combinatorial terms --- see Corollary \ref{gradings_on_sl_infinity} for $\Sl(\infty)$ and Corollary \ref{gradings_on_sop_infinity} for $\So(\infty)$ and $\Sp(\infty)$.


\section{Generalities on gradings}\label{generalities}

Let $G$ be a group, written multiplicatively, with the identity element denoted by $e$. An abelian group $M$ (respectively, vector space over a field $\FF$) is said to be {\em $G$-graded} if $M$ has an abelian group (respectively, $\FF$-vector space) decomposition 
\[
M=\bigoplus_{g\in G} M_g.
\] 
Any such decomposition is called a {\em $G$-grading on $M$} or a {\em grading on $M$ by $G$}. The grading is {\em trivial} if $M=M_e$. $M_g$ is called the {\em homogeneous component} of degree $g$. For any $m\in M_g$, we will say that $m$ is {\em homogeneous of degree $g$} and write $\deg v=g$. The {\em support} of the $G$-grading is the set
\[
\{g\in G\;|\;M_g\neq 0\}.
\]

For a fixed $g\in G$, we define the left and right {\em shifts} ${}^{[g]}M$ and $M^{[g]}$ by setting ${}^{[g]}M_h\bydef M_{g^{-1}h}$ and $M^{[g]}_h\bydef M_{hg^{-1}}$ for all $h\in G$. In other words, if $m\in M$ is homogeneous of degree $h$, then $m$ will have degree $gh$ in ${}^{[g]}M$ and degree $hg$ in $M^{[g]}$.

A ring $\cU$ (respectively, algebra over a field $\FF$) is said to be $G$-graded if it is $G$-graded as an abelian group (respectively, vector space) such that, for all $g,h\in G$, we have $\cU_g \cU_h\subset \cU_{gh}$. We say that $\cU$ is {\em graded simple} if $\cU^2\ne 0$ and $\cU$ has no nonzero proper graded ideals.

If a $G$-graded abelian group (respectively, vector space) $M$ is a left module over a $G$-graded associative ring (respectively, algebra) $\cR$, then we say that $M$ is a graded module if, for all $g,h\in G$, we have $\cR_g M_h\subset M_{gh}$. Note that if $M$ is a graded left $\cR$-module and $g\in G$, then the right shift $M^{[g]}$ is again a graded left $\cR$-module, with the same action of $\cR$.

We say that a graded left $\cR$-module $M$ is {\em graded simple} if $\cR M\ne 0$ and $M$ has no nonzero proper graded submodules. The {\em graded socle} of $M$ is the sum of all graded simple submodules of $M$. $\cR$ is (left) {\em graded primitive} if it has a graded left module that is faithful and graded simple. $\cR$ is {\em graded prime} if, for any nonzero graded ideals $I$ and $J$ of $\cR$, we have $IJ\ne 0$. (It does not matter here if the ideals are left, right or two-sided.) 
It is easy to see that if $\cR$ is graded primitive, then it is graded prime.

A homomorphism (respectively, linear transformation) $f\colon M\to N$ of $G$-graded abelian groups (respectively, vector spaces) is said to be {\em homogeneous of degree $h$} if $f(M_g)\subset N_{hg}$, for all $g\in G$. Thus, the set $\Homgr(M,N)$ of finite sums of homogeneous maps from $M$ to $N$ is a $G$-graded abelian group (respectively, vector space). A homomorphism of graded abelian groups (respectively, vector spaces) is a homogeneous map of degree $e$.

We say that two $G$-gradings $\cU=\bigoplus_{g\in G} \cU_g$ and $\cU=\bigoplus_{g\in G} \cU'_g$ on a ring (respectively, algebra) $\cU$ are {\em isomorphic} if there exists an automorphism $\psi:\cU\to\cU$ such that
\[
\psi(\cU_g)=\cU'_{g}\;\mbox{ for all }\;g\in G,
\]
i.e., $\bigoplus_{g\in G} \cU_g$ and $\bigoplus_{g\in G} \cU'_g$ are isomorphic as $G$-graded rings (respectively, algebras).


\section{Graded primitive algebras with minimal graded left ideals}\label{sSTGPA}

As mentioned in the Introduction, we want to study gradings on the associative algebras of the form $\mathfrak{F}_\Pi(U)$ where $U$ is a vector space over $\FF$ and $\Pi\subset U^\ast$ is a total subspace. A natural class to which these algebras belong is the primitive algebras with minimal left ideals. In fact, the simple algebras in this class are precisely the algebras $\mathfrak{F}_\Pi(U)$ where $U$ is a right vector space over a division algebra and $\Pi\subset U^\ast$ is a total subspace. We are now going to develop a graded version of the theory in \cite[Chapter IV]{JSR}, which will apply to the setting we are interested in --- thanks to the following lemma. All algebras in this section will be associative, but not necessarily unital. Unless indicated otherwise, the ground field $\FF$ is arbitrary. (In fact, some results will also apply to rings.)

\begin{lemma}\label{lFPGGP} Let $\cR$ be a primitive algebra (or ring) with minimal left ideals. Suppose that $\cR$ is given a grading by a group $G$. Then $\cR$ is graded primitive with minimal graded left ideals.
\end{lemma}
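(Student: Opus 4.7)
The strategy is to pass through the socle $\cS$ of $\cR$. By the classical structure theory (see \cite[Chapter~IV]{JSR}), $\cS$ is a nonzero simple ring, and I first observe that it is actually the unique minimal nonzero two-sided ideal of $\cR$: for any nonzero two-sided ideal $I$, the intersection $\cS\cap I$ is a two-sided ideal of the simple ring $\cS$, hence equals $0$ or $\cS$. The first case is excluded because $\cS\cdot I\subseteq\cS\cap I=0$ would force $I=0$, using that the right annihilator of $\cS$ in $\cR$ vanishes (since $\cS$ acts on the underlying faithful simple module in such a way that $\bigcap_{s\in\cS}\ker s=0$). My plan is then to first show that $\cS$ is itself a graded ideal of $\cR$, and then to produce a minimal graded left ideal inside $\cS$ that is also faithful as an $\cR$-module.

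For the graded-ness of $\cS$, consider the graded core $\cS^{\gr}:=\bigoplus_{g\in G}(\cS\cap\cR_g)$. This is a graded subspace, and is also a two-sided ideal of $\cR$: if $s\in\cS\cap\cR_g$ and $r\in\cR_h$, then $rs\in\cR_{hg}$ also lies in $\cS$, hence in $\cS^{\gr}$. Since $\cS^{\gr}\subseteq\cS$ and $\cS$ is the unique minimal nonzero two-sided ideal of $\cR$, we have $\cS^{\gr}\in\{0,\cS\}$, and the problem reduces to producing a single nonzero homogeneous element in $\cS$.

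This is the main obstacle, which I would address as follows. Pick a primitive idempotent $e\in\cS$, with homogeneous decomposition $e=\sum_g e_g$ in $\cR$. Then $e\cdot e_g\in\cS$ (as $\cS$ is a right ideal), and since $e$ is a rank-one operator on the faithful simple module, the image of $ee_g$ is contained in $eU$, a one-dimensional space over the division ring $\Delta=e\cR e$. Hence $ee_g$ is a rank-at-most-one element of $\cS$. Its homogeneous decomposition in $\cR$ reads $ee_g=\sum_h e_he_g$, with each $e_he_g\in\cR_{hg}$. Using the identities $\sum_{hk=g}e_he_k=e_g$ obtained from $e^2=e$, together with the rank-at-most-one constraints on the elements $ee_g$ and (symmetrically) $e_ge$, I aim to force some homogeneous product $e_he_k$ to be a nonzero finite-rank operator, yielding a nonzero element of $\cS\cap\cR_{hk}$.

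Once $\cS$ is established to be a graded ideal, it inherits the structure of a graded simple ring with minimal left ideals. A graded version of the classical argument then produces a homogeneous idempotent $f\in\cS$ for which $f\cR f$ is a graded division ring. The left ideal $\cR f$ coincides with $\cS f$ (since $rf=(rf)f\in\cS f$ for any $r\in\cR$, using $rf\in\cR\cdot\cS\subseteq\cS$), and is therefore a graded left ideal of $\cR$. It is already simple as an ungraded $\cR$-module, being a minimal left ideal of the primitive ring $\cR$, hence in particular graded simple; and it is faithful as an $\cR$-module, since otherwise its annihilator would be a nonzero two-sided ideal containing $\cS$, yielding the contradiction $\cS\cdot\cR f=0$. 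This establishes that $\cR$ is graded primitive with minimal graded left ideals.
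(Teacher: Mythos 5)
Your strategy differs substantively from the paper's. The paper's proof is a two-line reduction to a cited result: by \cite[Proposition 2.7.3]{NVO}, the graded socle of a graded module always contains the ordinary socle; since $\cR$ has minimal left ideals, the ordinary socle is nonzero, hence so is the graded socle, so there is a minimal graded left ideal $I$, and $I$ is faithful because $\cR$ is prime. You instead attempt a self-contained argument through the socle $\cS$ and its ``graded core'' $\cS^{\gr}=\bigoplus_g(\cS\cap\cR_g)$. The observations that $\cS$ is the unique minimal nonzero two-sided ideal and that $\cS^{\gr}$ is a graded two-sided ideal contained in $\cS$ are correct, and together they correctly reduce the whole lemma to a single point: produce one nonzero homogeneous element of $\cS$.

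That reduction is, however, exactly where the proof stops being a proof. Your sketch --- decomposing a rank-one idempotent $e=\sum_g e_g$, noting that $ee_g$ has rank $\le 1$, and hoping to force some $e_he_k$ to be a nonzero finite-rank operator --- is explicitly flagged as a plan (``I aim to force\ldots'') rather than carried out, and it faces a structural obstruction: the terms $e_he_g$ are precisely the homogeneous components of the socle element $ee_g$, and showing that these are again finite rank is the very statement (``homogeneous components of socle elements lie in the socle'') that you are trying to prove. The rank-$\le 1$ bound on $ee_g$ and $e_ge$ does not obviously pass to their individual homogeneous components; in fact this is the content of the cited NVO proposition, so the attempted workaround is circling back to the unproved claim. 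Until this step is actually established, the proof is incomplete at its crux. The concluding portion is also somewhat telegraphic, though it can be repaired once a nonzero homogeneous $x\in\cS$ is in hand: $\cR x$ is then a graded left ideal of finite length as an ordinary module (being contained in the socle and cyclic on a finite-rank generator), so it has a minimal graded submodule, which is a minimal graded left ideal; faithfulness follows from primeness as you say. But the existence of such an $x$ is the missing ingredient, and it is not minor.
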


\begin{proof}  
Any nonzero left ideal of $\cR$ is a faithful module, because $\cR$ is prime. Now consider $\cR$ as a graded left $\cR$-module. According to \cite[Proposition 2.7.3]{NVO}, the graded socle $S^\gr(\cR)$ always contains the ordinary socle $S(\cR)$. Hence $\cR$ must contain a minimal graded left ideal $I$. Since $I$ is a faithful graded simple $\cR$-module, $\cR$ is graded primitive. 
\end{proof}

\subsection{A structure theorem}

Let $\cR$ be a $G$-graded algebra (or ring) and let $V$ be a graded simple faithful left module over $\cR$. Let $\cD=\Endgr_\cR(V)$. We will follow the standard convention of writing the elements of $\cD$ on the right. By a version of Schur's Lemma (see e.g. \cite[Proposition 2.7.1]{NVO}), $\cD$ is a graded division algebra, and thus $V$ is a graded right vector space over $\cD$. Hence $\cR$ is isomorphic to a graded subalgebra of $\Endgr_\cD(V)$.

The {\em graded dual} $V^{\gr\ast}$ is defined as $\Homgr_\cD(V,\cD)$. Note that $V^{\gr\ast}$ is a graded left vector space over $\cD$, with the action of $\cD$ defined by $(df)(v)=df(v)$ for all $d\in\cD$, $v\in V$ and $f\in V^{\gr\ast}$. We will often use the symmetric notation $(f,v)$ for $f(v)$. 

Let $W\subset V^{\gr\ast}$ be a total graded subspace, i.e., the restriction of the mapping $(\,,\,)$ is a nondegenerate $\cD$-bilinear form $W\times V\to\cD$. Recall the graded subalgebras $\lvw$ and $\fvw$ of $\Endgr_\cD(V)$ defined in the Introduction: $\lvw$ is the set of all operators in $\Endgr_\cD(V)$ that are continuous with respect to the topology induced by $W$ and the subalgebra $\fvw\subset\lvw$ consists of all finite sums of operators of the form $v\ot w$ where $v\in V$ and $w\in W$ are homogeneous and $v\ot w$ acts on $V$ as follows:
\[
(v\ot w)(u)\bydef v(w,u)\quad\mbox{for all}\quad u,v\in V\;\mbox{ and }\;w\in W.
\]
It is easy to see that $V$ is a graded simple module for $\fvw$ and hence for $\lvw$ or any graded subalgebra of $\Endgr_\cD(V)$ containing $\fvw$.

Suppose that $\cR$ has a minimal graded left ideal $I$. Then either $I^2=0$ or $I=\cR\ve$ where $\ve$ is a homogeneous idempotent (hence of degree $e$). Indeed, if $I^2\ne 0$ then there is a homogeneous $x\in I$ such that $Ix\ne 0$ and hence $Ix=I$. The left annihilator $L$ of $x$ in $\cR$ is a graded left ideal of $\cR$ such that $I\not\subset L$ and hence $L\cap I=0$. Let $\ve$ be an element of $I$ such that $\ve x=x$. Since $x$ is homogeneous and every homogeneous component of $\ve$ is in $I$, we may assume that $\ve$ is homogeneous of degree $e$. Now $\ve^2 x=\ve x=x$ and hence $\ve^2-\ve\in L$. Since $I\cap L=0$, we conclude that $\ve^2=\ve$. Since $Ix\ne 0$, it follows that $I\ve\ne 0$ and hence $I\ve=I$. Since we are assuming $\cR$ graded primitive and hence graded prime, the case $I^2=0$ is not possible. Hence $I=\cR\ve$ is a graded simple $\cR$-module. 

The following is a graded version of a result in \cite[III.5]{JSR}.

\begin{lemma}\label{unique_graded_module}
Let $\cR$ be a graded primitive algebra (or ring) with a minimal graded left ideal $I$. Let $V$ be a faithful graded simple left $\cR$-module. Then there exists $g\in G$ such that $V$ is isomorphic to $I^{[g]}$ as a graded $\cR$-module. Hence, all faithful graded simple left $\cR$-modules are isomorphic up to a right shift of grading.
\end{lemma}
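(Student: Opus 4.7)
The plan is to mimic the classical proof for primitive rings with minimal left ideals given in \cite[III.5]{JSR}, keeping careful track of the grading. From the paragraph preceding the lemma, the hypothesis that $\cR$ is graded primitive (hence graded prime) rules out $I^2=0$, so $I=\cR\ve$ for a homogeneous idempotent $\ve$ of degree $e$. The target of the proof is to produce a nonzero homogeneous $v\in V$ with $\ve v\ne 0$, set $g\bydef\deg v$, and show that right multiplication by $v$ yields a graded $\cR$-module isomorphism $\phi\colon I^{[g]}\to V$, $r\mapsto rv$.

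For the existence of $v$: since $V$ is faithful and $I\ne 0$, we have $IV\ne 0$, hence $\ve V\ne 0$. Choose any $w\in V$ with $\ve w\ne 0$ and decompose $w=\sum_h w_h$ into homogeneous components; because $\deg\ve=e$, the sum $\ve w=\sum_h \ve w_h$ is again the homogeneous decomposition, so $\ve w_h\ne 0$ for some $h$, and we take $v\bydef w_h$ and $g\bydef h$. The map $r\mapsto rv$ then sends $I_h$ into $V_{hg}$, so under the shift convention $(I^{[g]})_k=I_{kg^{-1}}$ it becomes a degree-$e$ homomorphism of graded left $\cR$-modules $\phi\colon I^{[g]}\to V$.

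For bijectivity, the image $\phi(I)=\cR(\ve v)$ is a graded submodule of $V$ containing the nonzero homogeneous element $\ve v\in V_g$, so $\phi(I)=V$ by graded simplicity of $V$. The kernel of $\phi$ corresponds to a graded left ideal of $\cR$ contained in $I$; since $\phi(\ve)=\ve v\ne 0$, this kernel is not all of $I$, and minimality of $I$ forces it to be zero. Hence $\phi$ is a graded isomorphism and $V\cong I^{[g]}$. The final sentence of the lemma is then immediate: any two faithful graded simple left $\cR$-modules are each isomorphic to a right shift of $I$, so they are isomorphic to each other up to a right shift. There is no substantial obstacle; the only point requiring care is matching the shift of grading to the fact that right multiplication by a homogeneous element of degree $g$ raises degrees on the left by $g$.
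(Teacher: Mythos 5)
Your proof is correct and takes essentially the same approach as the paper's: use faithfulness to find a nonzero homogeneous $v\in V$ with $Iv\ne 0$, form the map $r\mapsto rv$, and invoke graded simplicity of both $I$ and $V$ to conclude it is an isomorphism. You spell out the injectivity and surjectivity arguments somewhat more explicitly via the idempotent $\ve$ and the kernel/image being graded submodules, but there is no substantive difference.
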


\begin{proof}
Since $IV$ is a graded submodule of $V$, we have either $IV=0$ or $IV=V$. But $V$ is faithful, so $IV=V$. Pick a homogeneous $v\in V$ such that $Iv\neq 0$ and let $g=\deg v$. Then the map $I\to V$ given by $r\mapsto rv$ is a homomorphism of $\cR$-modules and sends $I_h$ to $V_{hg}$, $h\in G$. By graded simplicity of $I$ and $V$, this map is an isomorphism of $\cR$-modules. Hence $I^{[g]}$ is isomorphic to $V$ as a graded $\cR$-module.
\end{proof}

Now we will obtain a graded version of the structure theorem in \cite[IV.9]{JSR}.

\begin{theorem}\label{theorem_graded_primitive} 
Let $\cR$ be a $G$-graded algebra (or ring). Then $\cR$ is graded primitive with minimal graded left ideals if and only if there exists a $G$-graded division algebra $\cD$, a graded right vector space $V$ over $\cD$ and a total graded subspace $W\subset V^{\gr\ast}$ such that $\cR$ is isomorphic to a graded subalgebra (subring) of $\lvw$ containing $\fvw$. Moreover, $\fvw$ is the only such subalgebra (subring) that is graded simple. 
\end{theorem}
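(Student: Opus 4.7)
The plan is to adapt the classical Jacobson structure theorem for primitive rings with minimal one-sided ideals (as in \cite[IV.9]{JSR}) to the graded setting, using the graded Schur lemma, Lemma \ref{unique_graded_module}, and the idempotent construction performed just before that lemma.

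For the forward direction, suppose $\cR$ is graded primitive with minimal graded left ideals, and let $V$ be a faithful graded simple left $\cR$-module. Then $\cD\bydef\Endgr_\cR(V)$ is a graded division algebra by the graded Schur lemma, and $V$ becomes a graded right $\cD$-vector space with $\cR\hookrightarrow\Endgr_\cD(V)$. By Lemma \ref{unique_graded_module}, after a shift we may take $V=I=\cR\ve$ for a minimal graded left ideal $I$ of $\cR$, where $\ve$ is a homogeneous idempotent of degree $e$ (as exhibited in the discussion preceding that lemma). Under these identifications $\cD$ may be identified with $\ve\cR\ve$. I then take $W\bydef\ve\cR$, viewed inside $V^{\gr\ast}$ via the pairing $(\ve t,s\ve)\bydef\ve ts\ve\in\cD$; this is manifestly $\cD$-bilinear and graded.

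To see $W$ is total, let $0\ne v\in V$ be homogeneous. Faithfulness and graded simplicity of $V$ force $\cR v=V$, so there is $r\in\cR$ with $rv=\ve$, and then $(\ve r,v)=\ve rv=\ve\ne 0$. To see $\cR\subset\lvw$, I note that for $r\in\cR$ and $\ve t\in W$ the adjoint satisfies $r^{\ast}(\ve t)=\ve tr\in\ve\cR=W$, so $r$ is continuous. Finally $\fvw\subset\cR$: the generator $v\ot w=s\ve\ot\ve t$ acts on $u\in V$ by $u\mapsto s\ve t\,u$, i.e.\ it is left multiplication by the element $s\ve t\in\cR$, so it lies in $\cR$.

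For the backward direction, assume $\fvw\subset\cR\subset\lvw$ with $W$ total. Then $V$ is graded simple over $\fvw$ (given a nonzero homogeneous $v_0\in V$ and any homogeneous $v\in V$, one picks $w\in W$ with $(w,v_0)$ a nonzero and hence invertible homogeneous element of $\cD$, and writes $v$ as an image of $v_0$ under a suitable $v'\ot w\in\fvw$), hence graded simple over $\cR$; it is also faithful, since $\cR\subset\Endgr_\cD(V)$. Thus $\cR$ is graded primitive. Minimal graded left ideals are supplied by fixing a homogeneous $0\ne w\in W$ and taking the graded left ideal $V\ot w\subset\fvw$: graded simplicity follows from the same transitivity argument. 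This, combined with the standard argument that $\fvw$ is a graded two-sided ideal of every subalgebra $\cR\subset\lvw$ containing $\fvw$ (using $r(v\ot w)=rv\ot w$ and $(v\ot w)r=v\ot r^{\ast}w$ with $r^{\ast}w\in W$), together with the graded simplicity of $\fvw$ itself (any nonzero graded ideal contains some rank-one $v\ot w$, and one generates all rank-one operators from it by left/right multiplication by further rank-one operators, using totality of $W$ and faithfulness of $V$), yields the uniqueness clause: if $\cR\supset\fvw$ is graded simple then $\fvw=\cR$.

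The main obstacle will be verifying graded simplicity of $\fvw$ and the transitivity-type statements underlying it; these are the places where one must be careful to stay within the homogeneous components and to use that nonzero homogeneous elements of $\cD$ are invertible. The rest is a conceptually routine, though notationally careful, graded analogue of Jacobson's arguments, once the homogeneous idempotent $\ve$ of degree $e$ has been produced as in the paragraph before Lemma \ref{unique_graded_module}.
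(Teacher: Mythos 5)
Your proof is correct and follows essentially the same route as the paper's: you realize $V$ as the minimal graded left ideal $I=\cR\veps$, identify $\cD$ with $\veps\cR\veps$ and $W$ with $\veps\cR$, verify the sandwich $\fvw\subset\cR\subset\lvw$, and then for the converse and uniqueness use graded simplicity of $V$ and the fact that $\fvw$ is a graded simple two-sided ideal of $\lvw$. The only cosmetic difference is that you check totality of $W$ directly by producing $r$ with $rv=\veps$, whereas the paper deduces it from faithfulness of $\veps\cR$ as a right $\cR$-module; both are minor variants of the same idea.
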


\begin{proof}
Suppose $\cR$ is graded primitive and $I$ is a minimal graded left ideal of $\cR$. We already know that $I=\cR\veps$ for some homogeneous idempotent $\veps$ and that $I$ is a right vector space over the graded division algebra $\cD=\Endgr_\cR(I)$. Since $I$ is a faithful $\cR$-module, we can identify $\cR$ with the graded subalgebra of $\Endgr_\cD(I)$ consisting of the left multiplications $L_a\colon x\mapsto ax$ (restricted to $I$) for all $a\in\cR$. We claim that $\cD$ can be identified with $\veps\cR\veps$ and that 
\begin{equation}\label{eq:between_F_and_L}
\CF{I}{J}\subset\cR\subset\CL{I}{J}
\end{equation}
where $J\bydef\veps\cR$ is identified with a total graded subspace of $I^{\gr\ast}$ by virtue of the $\cD$-bilinear form $J\times I\to\cD$ given by $(y,x)=yx$ for all $x\in I$ and $y\in J$.

Indeed, for any nonzero homogeneous element $d\in\veps\cR\veps$ of degree $g\in G$, the right multiplication $R_d\colon x\mapsto xd$ yields  a nonzero endomorphism of the left $\cR$-module $I$, which is also homogeneous of degree $g$. Let $\vphi$ be an element of $\End_\cR(I)$. Clearly, $\vphi$ is completely determined by the image of $\veps$, which is $a\veps$, for some $a\in\cR$. Now $\veps\vphi=\veps^2\vphi=\veps a\veps$ and, for all $x\in I$, we have $x\vphi=(x\veps)\vphi=x(\veps a\veps)$. Hence, $\vphi$ coincides with the right multiplication by the element $\veps a\veps\in\veps\cR\veps$ (restricted to $I$), so we have proved that $d\mapsto R_d$ maps $\veps\cR\veps$ onto $\End_\cR(I)$. This allows us to identify $\Endgr_\cR(I)=\End_\cR(I)$ with the graded subalgebra $\veps\cR\veps\subset\cR$.

For any homogeneous $y\in J$, consider $f_y\colon I\to\cD$ given by $f_y(x)=yx$. Clearly, $f_y\in I^{\gr\ast}$. Since $I$ is faithful, the mapping $y\mapsto f_y$ is injective and hence identifies $J$ with a graded subspace of $I^{\gr\ast}$. Since $J$ is faithful (as a right $\cR$-module), $J$ is a total graded subspace of $I^{\gr\ast}$. It remains to verify \eqref{eq:between_F_and_L}. 

For any $a\in\cR$, the adjoint operator $L_a^\ast:I^{\gr\ast}\to I^{\gr\ast}$ is given by $(L_a^\ast(f))(x)=f(ax)$, for all $x\in I$ and $f\in I^{\gr\ast}$. If $f=f_y$ for some $y\in J$, then we have $(L_a^*(f_y))(x)=f_y(ax)=yax=f_{ya}(x)$. This shows that $L_a^\ast(f_y)=f_{ya}$, where $ya\in J$, so $L_a$ leaves the subspace $J\subset I^{\gr\ast}$ invariant. We have shown that $\cR\subset\CL{I}{J}$. Now we want to show that $\CF{I}{J}\subset \cR$. It suffices to verify that, for all $x\in I$ and $y\in J$, the operator $x\ot f_y$ is in $\cR$. For any $z\in I$, we have $(x\ot f_y)(z)=x f_y(z)=xyz=L_{xy}(z)$. So $x\ot f_y=L_{xy}\in\cR$, as required. The proof of the claim is complete.

Conversely, suppose $\cR$ is a graded algebra such that $\fvw\subset\cR\subset\lvw$ for some $G$-graded division algebra $\cD$, a graded right $\cD$-vector space $V$ and a total graded subspace $W\subset V^{\gr\ast}$. Then $V$ is a faithful graded simple module for $\cR$. To produce a minimal graded left ideal in $\cR$, take any nonzero homogeneous $f\in W$ and let $I$ be the set of all operators $v\otimes f$ with $v\in V$. Clearly, $I$ is graded and $\fvw\subset\cR$ implies $I\subset\cR$. Since, for any $r\in\cR$, we have $r(v\ot f)=r(v)\ot f$, we conclude that $I$ is a left ideal of $\cR$. Now, if $v\ot f$ is any nonzero homogeneous element of $I$, then $v$ is homogeneous and hence we can choose a homogeneous $f'\in W$ such that $f'(v)=1$. If $u\ot f\in I$ is another homogeneous element, then $(u\ot f')(v\ot f)=u\ot f$. Since $\fvw\subset\cR$, it follows that $\cR(v\ot f)=I$, proving that $I$ is a minimal graded ideal of $\cR$. 

Finally, it is easy to verify that $\fvw$ is graded simple and that it is a two-sided ideal of $\lvw$. Hence, $\cR$ as above will be graded simple if and only if $\cR=\fvw$.
\end{proof}




\subsection{Isomorphisms}

We are going to investigate under what conditions two graded simple algebras described by Theorem \ref{theorem_graded_primitive} are isomorphic. By Lemma \ref{unique_graded_module}, $V$ is determined by $\cR$ up to isomorphism and shift of grading. If $\vphi\in\Endgr_\cR{V}$ is homogeneous of degree $t$, then $\vphi$ regarded as a map $V^{[g]}\to V^{[g]}$ will be homogeneous of degree $g^{-1}tg$. Indeed, $(V_h)\vphi\subset V_{ht}$, for all $h\in G$, can be rewritten as $(V_{hg}^{[g]})\vphi\subset V_{htg}^{[g]}$. Hence if $\cD=\Endgr_{\cR}(V)$, then $\Endgr_{\cR}(V^{[g]})={}^{[g^{-1}]}\cD^{[g]}$. It remains to include the total graded subspace $W\subset V^{\gr\ast}$ in the picture. It is convenient to introduce the following terminology. Fix a group $G$. 

\begin{df}
Let $\cD$ and $\cD'$ be $G$-graded division algebras (or rings) and let $V$ and $V'$ be graded right vector spaces over $\cD$ and $\cD'$, respectively. Let $\psi_0\colon\cD\to\cD'$ be an isomorphism of $G$-graded algebras (or rings). A homomorphism $\psi_1\colon V\to V'$ of $G$-graded vector spaces over $\FF$ (or $G$-graded abelian groups) is said to be {\em $\psi_0$-semilinear} if $\psi_1(vd)=\psi_1(v)\psi_0(d)$ for all $v\in V$ and $d\in\cD$. The {\em adjoint} to $\psi_1$ is the mapping $\psi_1^*:(V')^{\gr\ast}\to V^{\gr\ast}$ defined by $(\psi_1^*(f))(v)=\psi_0^{-1}(f(\psi_1(v)))$ for all $f\in(V')^{\gr\ast}$ and $v\in V$.  
\end{df}

One easily checks that $\psi_1^*$ is $\psi_0^{-1}$-semilinear.

\begin{df}
Let $\cD$ and $\cD'$ be $G$-graded division algebras (or rings), let $V$ and $V'$ be graded right vector spaces over $\cD$ and $\cD'$, respectively, and let $W$ and $W'$ be total graded subspaces of $V^{\gr\ast}$ and $(V')^{\gr\ast}$, respectively. An \emph{isomorphism of triples} from $(\cD,V,W)$ to $(\cD',V',W')$ is a triple $(\psi_0,\psi_1,\psi_2)$ where $\psi_0\colon\cD\to\cD'$ is an isomorphism of graded algebras (or rings) while $\psi_1\colon V\to V'$ and $\psi_2\colon W\to W'$ are isomorphisms of graded vector spaces over $\FF$ (or graded abelian groups) such that $(\psi_2(w),\psi_1(v))=\psi_0((w,v))$ for all $v\in V$ and $w\in W$.
\end{df}

It follows that $\psi_1$ and $\psi_2$ are $\psi_0$-semilinear. Also, for given isomorphisms $\psi_0$ and $\psi_1$ there can exist at most one $\psi_2$ such that $(\psi_0,\psi_1,\psi_2)$ is an isomorphism of triples. Such $\psi_2$ exists if and only if $\psi_1$ is $\psi_0$-semilinear and $\psi_1^*(W')= W$. Indeed, we can take $\psi_2$ to be the restriction of $(\psi_1^*)^{-1}$ to $W$. The condition $\psi_1^*(W')= W$ means that $\psi_1\colon V\to V'$ is a homeomorphism with respect to the topologies induced by $W$ and $W'$.

The following is a graded version of the isomorphism theorem in \cite[IV.11]{JSR}. 

\begin{theorem}\label{isomorphism_graded_simple} 
Let $G$ be a group. Let $\cD$ and $\cD'$ be $G$-graded division algebras (or rings), let $V$ and $V'$ be graded right vector spaces over $\cD$ and $\cD'$, respectively, and let $W$ and $W'$ be total graded subspaces of $V^{\gr\ast}$ and $(V')^{\gr\ast}$, respectively. Let $\cR$ and $\cR'$ be $G$-graded algebras (or rings) such that 
\[
\CF{V}{W}\subset\cR\subset\CL{V}{W}
\quad\mbox{and}\quad
\CF{V'}{W'}\subset\cR'\subset\CL{V'}{W'}.
\] 
If $\psi\colon\cR\to\cR'$ is an isomorphism of graded algebras, then there exist $g\in G$ and an isomorphism $(\psi_0,\psi_1,\psi_2)$ from $({}^{[g^{-1}]}\cD^{[g]},V^{[g]},{}^{[g^{-1}]}W)$ to $(\cD',V',W')$ such that 
\begin{equation}\label{eq:psi_of_r}
\psi(r)=\psi_1\circ r\circ\psi_1^{-1}\quad\mbox{for all}\quad r\in\cR.
\end{equation}
If other $g'\in G$ and isomorphism $(\psi'_0,\psi'_1,\psi'_2)$ from $({}^{[(g')^{-1}]}\cD^{[g']},V^{[g']},{}^{[(g')^{-1}]}W)$ to $(\cD',V',W')$ define $\psi$ as above, then there exists a nonzero homogeneous $d\in\cD$ such that $g'=g\deg d$, $\psi'_0(x)=d^{-1}\psi_0(x)d$ for all $x\in\cD$, $\psi'_1(v)=\psi_1(v)d$ for all $v\in V$, and $\psi'_2(w)=d^{-1}\psi_2(w)$ for all $w\in W$.

As a partial converse, if $(\psi_0,\psi_1,\psi_2)$ is an isomorphism of triples as above, then setting $\psi(r)=\psi_1\circ r\circ\psi_1^{-1}$ defines an isomorphism of $G$-graded algebras (or rings) $\psi\colon\CL{V}{W}\to\CL{V'}{W'}$ such that $\psi(\CF{V}{W})=\CF{V'}{W'}$.
\end{theorem}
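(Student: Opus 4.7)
My plan is to extract the triple $(\psi_0,\psi_1,\psi_2)$ from $\psi$ by exploiting the intrinsic data — the faithful graded simple module and its graded division endomorphism ring — supplied by $\cR$. First, view $V'$ as a faithful graded simple left $\cR$-module via $\psi$. Lemma~\ref{unique_graded_module} then produces an element $g\in G$ and an isomorphism $\psi_1\colon V^{[g]}\to V'$ of graded $\cR$-modules, which is exactly what formula \eqref{eq:psi_of_r} requires. Setting $\psi_0(d):=\psi_1\circ d\circ\psi_1^{-1}$ under the identifications $\Endgr_\cR(V^{[g]})={}^{[g^{-1}]}\cD^{[g]}$ and $\Endgr_{\cR'}(V')=\cD'$ supplied in the excerpt defines a graded algebra isomorphism ${}^{[g^{-1}]}\cD^{[g]}\to\cD'$, and $\psi_1$ is $\psi_0$-semilinear by construction.

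For $\psi_2$, I would look at the image of a rank-one operator. For homogeneous $v\in V$ and $f\in W$, a direct computation gives
\[
\psi(v\ot f)=\psi_1(v)\ot\tilde f,\qquad \tilde f:=\psi_0\circ f\circ\psi_1^{-1}\in(V')^{\gr\ast}.
\]
Since $v\ot f\in\fvw\subset\cR$, we have $\psi(v\ot f)\in\cR'\subset\CL{V'}{W'}$, so this rank-one operator is continuous with respect to $W'$. Unfolding the continuity condition through the adjoint then forces $\tilde f\in W'$, since $W'$ is a left $\cD'$-subspace of $(V')^{\gr\ast}$ and the scalar factors produced by the adjoint lie in $\cD'$. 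Setting $\psi_2(f):=\tilde f$ yields a well-defined map ${}^{[g^{-1}]}W\to W'$; applying the same argument to $\psi^{-1}$ supplies its inverse, and the pairing identity $(\psi_2(w),\psi_1(v))=\psi_0((w,v))$ is built into the definition. Matching the shifted gradings is then routine bookkeeping.

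For uniqueness, if two triples with shifts $g$ and $g'$ both realize $\psi$, then the composition $\psi_1^{-1}\circ\psi'_1\colon V^{[g']}\to V^{[g]}$ commutes with the $\cR$-action, hence lies in $\Homgr_\cR(V^{[g']},V^{[g]})$. By graded simplicity of $V$ and a graded Schur-type argument, any nonzero such $\cR$-linear map is right multiplication by a homogeneous element $d\in\cD$; matching shifted gradings then forces $g'=g\deg d$, and substituting this into the defining relations of $\psi'_0$, $\psi'_1$, $\psi'_2$ yields the three asserted formulas.

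For the converse, I would set $\psi(r):=\psi_1\circ r\circ\psi_1^{-1}$ and check: (i) $\cD'$-linearity, from $\psi_0$-semilinearity of $\psi_1$; (ii) degree preservation, by tracking the shifts; and (iii) continuity, from $\psi_1^*(W')=W$, which is precisely what $\psi_2$ encodes. A rank-one calculation then gives $\psi(v\ot w)=\psi_1(v)\ot\psi_2(w)$, from which $\psi(\fvw)=\CF{V'}{W'}$ follows. The main obstacle is the construction of $\psi_2$: concretely, one must translate the continuity of $\psi(v\ot f)\in\cR'$ into a statement that $\tilde f$ lies in $W'$, using that $W'$ is a $\cD'$-submodule closed under the scalar factors produced by the adjoint computation. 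Once this step is secured, the remainder of the argument reduces to careful but mechanical graded-algebra bookkeeping.
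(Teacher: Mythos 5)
Your proposal follows essentially the same route as the paper's own proof: obtain $g$ and $\psi_1$ from Lemma~\ref{unique_graded_module} via the transported $\cR$-module structure on $V'$, define $\psi_0$ by conjugation, extract $\psi_2$ from the continuity of $\psi(v\ot f)$ (the paper makes your "unfolding" step explicit by hitting with an $f'\in W'$ satisfying $f'(\psi_1(v))=1$, then invoking that $W'$ is a left $\cD'$-submodule), handle uniqueness via graded Schur, and verify the converse by the rank-one identity $\psi(v\ot w)=\psi_1(v)\ot\psi_2(w)$. The only spot worth tightening is that "$\tilde f\in W'$" ultimately rests on totality of $W'$ to produce a nonzero (hence invertible, $\cD'$ being graded division) scalar $f'(\psi_1(v))$, but that is implicit in your reasoning.
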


\begin{proof}
Define a left $\cR$-module structure on $V'$ by setting $rv'\bydef\psi(r)v'$ for all $r\in\cR$ and $v'\in V'$. 
Then $V'$ is a faithful graded simple $\cR$-module and hence, by Lemma \ref{unique_graded_module}, there exists an isomorphism $\psi_1\colon V^{[g]}\to V'$ for some $g\in G$. By our definition of $\cR$-module structure on $V'$, we have $\psi_1(rv)=\psi(r)\psi_1(v)$ for all $r\in\cR$ and $v\in V$, i.e., \eqref{eq:psi_of_r} holds. Since ${}^{[g^{-1}]}\cD^{[g]}=\Endgr_{\cR}(V^{[g]})$ and $\cD'=\Endgr_{\cR}(V')$, we can define $\psi_0$ by setting $(v')(\psi_0(d))\bydef \psi_1(\psi_1^{-1}(v')d)$ for $v'\in V'$ and $d\in\cD$. Then $\psi_1$ is $\psi_0$-semilinear.

Now we need to show that $\psi_1^*(W')=W$. Consider a nonzero linear operator $r=v\ot w\in\CF{V}{W}$ where $v$ is homogeneous. We have $\psi (v\ot w)\in\CL{V'}{W'}$. Therefore, $(\psi (v\ot w))^*(W')\subset W'$. For any $v'\in V'$, we have
\begin{align*}
(\psi(v\ot w))(v')&=\psi_1((v\ot w)(\psi_1^{-1}(v')))=\psi_1(v(w,\psi_1^{-1}(v')))\\
&=\psi_1(v)\psi_0((w,\psi_1^{-1}(v'))).
\end{align*} 
Hence the $\cD'$-linear map $f\colon V'\to\cD'$ sending $v'\to\psi_0((w,\psi_1^{-1}(v')))$ is the composition of $\psi(v\ot w)$ and any element $f'\in W'$ that sends $\psi_1(v)$ to $1$. In other words, $f=(\psi (v\ot w))^*(f')$ and hence $f\in W'$. On the other hand,  $f=(\psi_1^{-1})^*(w)$ by definition. We have proved that $(\psi_1^{-1})^*(W)\subset W'$. By a similar argument applied to $\psi^{-1}$ instead of $\psi$, we obtain $\psi_1^*(W')\subset W$. Therefore, $\psi_1^*(W')=W$, as required. It follows that there exists $\psi_2\colon {}^{[g^{-1}]}W\to W'$ such that $(\psi_0,\psi_1,\psi_2)$ is an isomorphism of triples. 

If we have other $g'$ and $(\psi'_0,\psi'_1,\psi'_2)$, then $\psi'_1\circ\psi_1^{-1}$ is a homogeneous automorphism of the graded $\cR$-module $V'$, hence there exists a nonzero homogeneous $d\in\cD$ such that $\psi'_1(v)=\psi_1(v)d$ for all $v\in V$. Comparing degrees, we see that $g'=g\deg d$. Since $\psi'_1(vx)=\psi_1(v)\psi_0(x)d=\psi'(v)d^{-1}\psi_0(x)d$, we get $\psi'_0(x)=d^{-1}\psi_0(x)d$ for all $x\in\cD$. Substituting our expressions for $\psi'_0$ and $\psi'_1$ into the equation $(\psi'_2(w),\psi'_1(v))=\psi'_0((w,v))$, we get $(\psi'_2(w),\psi_1(v))=d^{-1}\psi_0((w,v))$ for all $v\in V$ and $w\in W$, which implies $d\psi'_2(w)=\psi_2(w)$.

Conversely, given an isomorphism of triples $(\psi_0,\psi_1,\psi_2)$, we know that $\psi_1$ is $\psi_0$-semilinear and $\psi_1^*(W')=W$. Let $r\in\Endgr_\cD(V)$ be homogeneous of degree $h\in G$. Consider the map $\psi_1\circ r\circ\psi_1^{-1}\colon V'\to V'$. For any $k\in G$, $\psi_1^{-1}$ sends $V'_k$ to $V_{kg^{-1}}$, then $r$ sends $V_{kg^{-1}}$ to $V_{hkg^{-1}}$, and, finally, $\psi_1$ sends $V_{hkg^{-1}}$ to $V'_{hk}$. Hence $\psi_1\circ r\circ\psi_1^{-1}$ is homogeneous of degree $h$. Since $\psi_1^{-1}$ is $\psi_0^{-1}$-semilinear and $\psi_1$ is $\psi_0$-semilinear, the composition $\psi_1\circ r\circ\psi_1^{-1}$ is a linear operator on $V'$. We have proved that the map $\psi\colon r\mapsto \psi_1\circ r\circ\psi_1^{-1}$ is an isomorphism of $G$-graded algebras $\Endgr_{\cD}(V)\to\Endgr_{\cD'}(V')$. Since $\psi_1^*(W')=W$, we have the following: $r\in\CL{V}{W}$ if and only if $r^*(W)\subset W$ if and only if $(\psi_1\circ r\circ\psi_1^{-1})^*(W')\subset W'$ if and only if $\psi(r)\in\CL{V'}{W'}$. Therefore, $\psi$ restricts to an isomorphism of graded algebras $\CL{V}{W}\to\CL{V'}{W'}$. It is easy to verify that 
\begin{equation}\label{eq:psi_on_CF}
\psi(v\ot w)=\psi_1(v)\ot\psi_2(w)\quad\mbox{for all}\quad v\in V\;\mbox{ and }\;w\in W.
\end{equation} 
It follows that $\psi(\CF{V}{W})=\CF{V'}{W'}$. 
\end{proof}

Note that it follows that any isomorphism of graded algebras $\psi:\cR\to\cR'$ extends to an isomorphism  $\CL{V}{W}\to\CL{V'}{W'}$ and restricts to an isomorphism $\CF{V}{W}\to\CF{V'}{W'}$.

\subsection{Graded simple algebras with minimal graded left ideals}\label{ssGSAMI}

Fix a grading group $G$. In view of Theorems \ref{theorem_graded_primitive} and \ref{isomorphism_graded_simple}, graded simple algebras (or rings) with minimal graded left ideals are classified by the triples $(\cD,V,W)$ where $\cD$ is a graded division algebra (or ring), $V$ is a right graded vector space over $\cD$ and $W$ is a total graded subspace of $V^{\gr\ast}$. For a fixed $\cD$, the triples $(\cD,V,W)$ can be classified up to isomorphism as follows. 

Let $T$ be the support of $\cD$ and let $\Delta=\cD_e$. Clearly, $T$ is a subgroup of $G$ and $\Delta$ is a division algebra (or ring). Consider the set $G/T$ of left cosets and the set $T\backslash G$ of right cosets of $T$ in $G$. The map $A\to A^{-1}$ is a bijection between $G/T$ and $T\backslash G$. Clearly, the graded right $\cD$-modules ${}^{[g]}\cD$ and ${}^{[h]}\cD$ are isomorphic if and only if $gT=hT$ (similarly for graded left $\cD$-modules). Any graded right $\cD$-module $V$ is a direct sum of modules of this form, which can be grouped into isotypic components. Namely, $V_A\bydef\bigoplus_{a\in A}V_a$ is the isotypic component of $V$ corresponding to ${}^{[g]}\cD$ where $A=gT$. Note that $V_g$ is a right $\Delta$-module and $V_A\cong V_g\ot_\Delta {}^{[g]}\cD$ as graded right $\cD$-modules. Select a left transversal $S$ for $T$, i.e., a set of representatives for the left cosets of $T$, and let $\tV_A=V_g$ where $g$ is the unique element of $A\cap S$. Let $\tV\bydef\bigoplus_{A\in G/T}\tV_A$. Then $\tV$ is a right $\Delta$-module and $V\cong\tV\ot_\Delta\cD$ as ungraded right $\cD$-modules. We can recover the $G$-grading on $V$ if we consider $\tV$ as graded by the set $G/T$. Similarly, any graded left $\cD$-module $W$ can be encoded by a left $\Delta$-module $\tW$ with a grading by the set $T\backslash G$. 

Now observe that since $T$ is the support of $\cD$, we have $(W_B,V_A)=0$ for all $A\in G/T$ and $B\in T\backslash G$ with $BA\ne T$. It follows that $W_{A^{-1}}$ is a total graded subspace of $(V_A)^{\gr\ast}$. Selecting a left transversal $S$ and using $S^{-1}$ as a right transversal, we obtain $\tV$ and $\tW$ such that $(\tW_{A^{-1}},\tV_A)\subset\Delta$ and 
\begin{equation}\label{eq:dual_stubs}
(\tW_{B},\tV_A)=0\quad\mbox{for all}\quad A\in G/T\;\mbox{ and }\;B\in T\backslash G\;\mbox{ with }\;B\ne A^{-1}. 
\end{equation}
Hence, for each $A\in G/T$, the $\cD$-bilinear form $W\times V\to\cD$ restricts to a nondegenerate $\Delta$-bilinear form $\tW_{A^{-1}}\times\tV_A\to\Delta$, which identifies $\tW_{A^{-1}}$ with a total subspace of the $\Delta$-dual $(\tV_A)^\ast$ .

Conversely, let $\tV$ be a right $\Delta$-module that is given a grading by $G/T$ and let $\tW$ be a left $\Delta$-module that is given a grading by $T\backslash G$. Suppose $\tW_{A^{-1}}$ is identified with a total subspace of $(\tV_A)^\ast$ for each $A\in G/T$ or, equivalently, we have a nondegenerate $\Delta$-bilinear form $\tW\times\tV\to\Delta$ that satisfies \eqref{eq:dual_stubs}. For each $A\in G/T$, choose $g\in A$ and let $V_A=\tV_A\ot_\Delta{}^{[g]}\cD$. Then $V_A$ is a graded right $\cD$-module whose isomorphism class does not depend on the choice of $g$. Also, using the same $g$, let $W_{A^{-1}}=\cD^{[g]}\ot_\Delta\tW_{A^{-1}}$. Set $V=\bigoplus_{A\in G/T}V_A$ and $W=\bigoplus_{B\in T\backslash G}W_B$. Extending the $\Delta$-bilinear form $\tW\times\tV\to\Delta$ to a $\cD$-bilinear form $W\times V\to\cD$, we can identify $W$ with a total graded subspace of $V^{\gr\ast}$. We will denote the corresponding $G$-graded algebra $\fvw$ by $\frF(G,\cD,\tV,\tW)$.

\begin{df}\label{df:datum_finitary_general}
We will write $(\cD,\tV,\tW)\sim (\cD',\tV',\tW')$ if there is an element $g\in G$ and an isomorphism $\psi_0\colon {}^{[g^{-1}]}\cD^{[g]}\to\cD'$ of graded algebras such that, for any $A\in G/T$, there exists an isomorphism of triples from $(\Delta,\tV_{A},\tW_{A^{-1}})$ to $(\Delta',\tV'_{Ag},\tW'_{g^{-1}A^{-1}})$ whose component $\Delta\to\Delta'$ is the restriction of $\psi_0$. (Note that $Ag$ is a left coset for $T'=g^{-1}Tg$.) 
\end{df}

\begin{corollary}\label{classification_iso_assoc_graded_simple}
Let $G$ be a group and let $\cR$ be a $G$-graded algebra (or ring). If $\cR$ is graded simple with minimal graded left ideals, then $\cR$ is isomorphic to some $\frF(G,\cD,\tV,\tW)$. Two graded algebras $\frF(G,\cD,\tV,\tW)$ and $\frF(G,\cD',\tV',\tW')$ are isomorphic if and only if $(\cD,\tV,\tW)\sim (\cD',\tV',\tW')$.
\end{corollary}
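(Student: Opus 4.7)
The plan is to combine Theorem~\ref{theorem_graded_primitive} with the isotypic-decomposition recipe of Section~\ref{ssGSAMI} for the existence half, and to reduce the isomorphism criterion directly to Theorem~\ref{isomorphism_graded_simple}. For the existence claim, Theorem~\ref{theorem_graded_primitive} produces a graded division algebra $\cD$, a graded right $\cD$-vector space $V$, and a total graded subspace $W\subset V^{\gr\ast}$ with $\cR\cong\fvw$; fixing a left transversal $S\subset G$ for $T:=\supp\cD$ and packaging $(V,W)$ by the recipe in Section~\ref{ssGSAMI} yields the $(G/T)$-graded right $\Delta$-module $\tV$ and the $(T\backslash G)$-graded left $\Delta$-module $\tW$ with the required nondegenerate pairing, and the identification $\cR\cong\frF(G,\cD,\tV,\tW)$ is built into the construction.

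For the ``if'' direction of the isomorphism criterion, suppose $(\cD,\tV,\tW)\sim(\cD',\tV',\tW')$, witnessed by $g\in G$, a graded algebra isomorphism $\psi_0\colon{}^{[g^{-1}]}\cD^{[g]}\to\cD'$, and, for each $A\in G/T$, a $\psi_0|_\Delta$-semilinear isomorphism of triples $(\Delta,\tV_A,\tW_{A^{-1}})\to(\Delta',\tV'_{Ag},\tW'_{g^{-1}A^{-1}})$. I would reconstruct $V,W$ from $(\tV,\tW)$ and $V',W'$ from $(\tV',\tW')$ as in Section~\ref{ssGSAMI} and glue the component-wise maps into $\psi_0$-semilinear isomorphisms $\psi_1\colon V^{[g]}\to V'$ and $\psi_2\colon{}^{[g^{-1}]}W\to W'$ by tensoring over $\Delta$ with the appropriate shifts of $\cD$. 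Preservation of the $\cD$-bilinear pairing on $W\times V$ reduces to its component-wise form~\eqref{eq:dual_stubs}, so $(\psi_0,\psi_1,\psi_2)$ satisfies the hypotheses of the partial converse in Theorem~\ref{isomorphism_graded_simple}, which delivers a graded algebra isomorphism $\fvw\to\CF{V'}{W'}$.

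For the ``only if'' direction, any isomorphism $\psi\colon\frF(G,\cD,\tV,\tW)\to\frF(G,\cD',\tV',\tW')$ is, by Theorem~\ref{isomorphism_graded_simple}, induced by some $g\in G$ and an isomorphism of triples $(\psi_0,\psi_1,\psi_2)$ from $({}^{[g^{-1}]}\cD^{[g]},V^{[g]},{}^{[g^{-1}]}W)$ to $(\cD',V',W')$. Matching supports of $\psi_0$ forces $T'=g^{-1}Tg$, and a direct degree count shows that the isotypic component of $V^{[g]}$ (regarded as a graded right ${}^{[g^{-1}]}\cD^{[g]}$-module) indexed by $B\in G/T'$ coincides with $V_A$ for $A=Bg^{-1}$, so that $B=Ag$; the analogous statement holds for $W$. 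Because $\psi_1$ and $\psi_2$ are homogeneous and semilinear they must carry isotypic components to isotypic components, and restricting them to the chosen transversals (which is how $\tV$ embeds in $V$ and $\tV'$ in $V'$) produces the required ungraded isomorphisms of triples $(\Delta,\tV_A,\tW_{A^{-1}})\to(\Delta',\tV'_{Ag},\tW'_{g^{-1}A^{-1}})$, establishing $(\cD,\tV,\tW)\sim(\cD',\tV',\tW')$.

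The main obstacle is essentially bookkeeping: one must carefully track how the shift $V\rightsquigarrow V^{[g]}$ transforms the $(G/T)$-indexed isotypic decomposition of $V$ into the $(G/T')$-indexed decomposition over ${}^{[g^{-1}]}\cD^{[g]}$, and how the chosen transversal $S$ for $T$ translates into a coherent transversal for $T'$. Once these identifications are recorded, the remaining content of both directions is a formal consequence of Theorem~\ref{isomorphism_graded_simple} and the definition of $\frF$.
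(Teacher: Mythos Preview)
Your proposal is correct and follows essentially the same approach as the paper: both use Theorem~\ref{theorem_graded_primitive} together with the isotypic-decomposition construction for existence, and both reduce the isomorphism criterion to Theorem~\ref{isomorphism_graded_simple} by observing that Definition~\ref{df:datum_finitary_general} is precisely set up so that $(\cD,\tV,\tW)\sim(\cD',\tV',\tW')$ holds if and only if the triples $({}^{[g^{-1}]}\cD^{[g]},V^{[g]},{}^{[g^{-1}]}W)$ and $(\cD',V',W')$ are isomorphic for some $g\in G$. The paper states this correspondence as a single sentence, whereas you unpack it explicitly in both directions; your closing paragraph correctly identifies the only subtlety (aligning transversals under the shift) as bookkeeping.
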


\begin{proof}
The first claim is clear by Theorem \ref{theorem_graded_primitive} and the above discussion. Definition \ref{df:datum_finitary} is set up in such a way that $(\cD,\tV,\tW)\sim (\cD',\tV',\tW')$ if and only if the triples $({}^{[g^{-1}]}\cD^{[g]},V^{[g]},{}^{[g^{-1}]}W)$ and $(\cD',V',W')$ are isomorphic for some $g\in G$, so the second claim follows from Theorem \ref{isomorphism_graded_simple}.
\end{proof}
 
An important special case of Corollary \ref{classification_iso_assoc_graded_simple} is where the graded simple algebra $\cR$ satisfies the descending chain condition on graded left ideals. Then $V$ is finite-dimensional over $\cD$, so $W=V^{\gr\ast}=V^\ast$ and $\cR$ is isomorphic to the matrix algebra $M_k(\cD)$ where $k=\dim_\cD V$. Moreover, $V=V_{A_1}\oplus\cdots\oplus V_{A_s}$ for some distinct  $A_1,\ldots,A_s\in G/T$, which can be encoded by two $s$-tuples: $\kappa=(k_1,\ldots,k_s)$ and $\gamma=(g_1,\ldots,g_s)$ where $k_i\bydef\dim_\cD V_{A_i}$ are positive integers with $k_1+\cdots+k_s=k$ and $g_i$ are representatives for the cosets $A_i$. Therefore, the said algebras $\cR$ are classified by the triples $(\cD,\kappa,\gamma)$, up to an appropriate equivalence relation. Explicitly, the grading on $\cR$ can be written as follows. If $\{v_1,\ld,v_k\}$ is a homogeneous basis of $V$ over $\cD$, with $\deg v_i=h_i$, and $\{v^1,\ld,v^k\}$ is the dual basis of $V^\ast$, then $\deg v^i=h_i^{-1}$ and, for any homogeneous $d\in\cD$, the degree of the operator $v_i d\otimes v^j=v_i\otimes d v^j$ (which is represented by the matrix with $d$ in position $(i,j)$ and zeros elsewhere) equals $h_i(\deg d)h_j^{-1}$. This classification (under the assumption that $\cR$ is finite-dimensional over $\FF$) already appeared in the literature --- see \cite{BK} and references therein. The $G$-gradings on $M_k(\cD)$ defined in this way by $k$-tuples $(h_1,\ldots,h_k)$ of elements in $G$ will be called {\em elementary}.

In general, $\frF(G,\cD,\tV,\tW)$ can be written as $\tV\ot_\Delta\cD\ot_\Delta\tW$ where, for any $\tilde v\in\tV$, $d\in\cD$, $\tilde w\in\tW$, the element $\tilde v\ot d\ot\tilde w$ acts on $V=\tV\ot_\Delta\cD$ as follows: $(\tilde v\ot d\ot\tilde w)(u\ot a)=\tilde v\ot d(w,u)a$ for all $\tilde u\in\tV$ and $a\in\cD$. Clearly, the multiplication of $\frF(G,\cD,\tV,\tW)$ is given by 
\begin{equation}\label{eq:mult_frF}
(\tilde v\ot d\ot\tilde w)(\tilde v'\ot d'\ot\tilde w')=\tilde v\ot d(\tilde w,\tilde v')d'\ot\tilde w'.
\end{equation}
Fixing a left transversal $S$ for $T$, the $G$-grading on $\frF(G,\cD,\tV,\tW)$ is given by 
\begin{equation}\label{eq:grad_frF}
\deg(\tilde v\ot d\ot\tilde w)=\gamma(A)t\gamma(B)^{-1}\quad\mbox{for all}\quad \tilde v\in\tV_A,\,d\in\cD_t,\,\tilde w\in\tW_{B^{-1}},
\end{equation}
where $t\in T$, $A,B\in G/T$, and $\gamma(A)$ denotes the unique element of $A\cap S$. The isomorphism class of the grading does not depend on the choice of the transversal.

It is known that, for any finite-dimensional subspaces $\tV_1\subset\tV$ and $\tW_1\subset\tW$, there exist finite-dimensional subspaces $\tV_0\subset \tV$ and $\tW_0\subset\tW$ such that $\tV_1\subset\tV_0$, $\tW_1\subset\tW_0$, and the restriction of the bilinear form $\tW\times\tV\to\Delta$ to $\tW_0\times \tV_0$ is nondegenerate (see e.g. \cite[Lemma 5.7]{Bar}). Selecting dual bases in $\tV_0$ and $\tW_0$, we see that $\tV_0\ot_\Delta\cD\ot_\Delta\tW_0$ is a subalgebra of $\frF(G,T,\tV,\tW)$ isomorphic to $M_k(\cD)$ where $k=\dim_\Delta\tV_0=\dim_\Delta\tW_0$. Without loss of generality, we may assume that $\tV_0$ is a graded subspace of $\tV$ with respect to the grading by $G/T$ and $\tW_0$ is a graded subspace of $\tW$ with respect to the grading by $T\backslash G$. Then our subalgebra $\tV_0\ot_\Delta\cD\ot_\Delta\tW_0$ is graded. Moreover, in terms of the matrix algebra $M_k(\cD)$, this grading is elementary.  Thus we obtain the following  graded version of Litoff's Theorem \cite[IV.15]{JSR} (cf. Theorem 4 in \cite{BZ10}):

\begin{corollary}\label{graded_Litoff}
Let $G$ be a group and let $\cR$ be a $G$-graded algebra (or ring). If $\cR$ is graded simple with minimal graded left ideals, then there exists a graded division algebra $\cD$ such that $\cR$ is a direct limit of matrix algebras over $\cD$ with elementary gradings.\qed
\end{corollary}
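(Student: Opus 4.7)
The plan is to recast $\cR$ in its canonical form using Corollary~\ref{classification_iso_assoc_graded_simple} and then exhibit it as a directed union of the graded matrix subalgebras that were constructed explicitly in the paragraph preceding the statement.

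First, by Corollary~\ref{classification_iso_assoc_graded_simple} I may assume that $\cR=\frF(G,\cD,\tV,\tW)=\tV\ot_\Delta\cD\ot_\Delta\tW$, with $\tV$ graded by $G/T$, $\tW$ graded by $T\backslash G$, and a nondegenerate $\Delta$-bilinear form $\tW\times\tV\to\Delta$ satisfying \eqref{eq:dual_stubs}. Let $\cI$ be the collection of pairs $(\tV_0,\tW_0)$ where $\tV_0\subset\tV$ and $\tW_0\subset\tW$ are finite-dimensional graded $\Delta$-subspaces (with respect to the $G/T$- and $T\backslash G$-gradings, respectively) such that the form restricts to a nondegenerate pairing on $\tW_0\times\tV_0$; order $\cI$ by componentwise inclusion. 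For each $(\tV_0,\tW_0)\in\cI$, the subspace $\cR_{(\tV_0,\tW_0)}\bydef\tV_0\ot_\Delta\cD\ot_\Delta\tW_0$ is a graded $\FF$-subalgebra of $\cR$ by the multiplication formula \eqref{eq:mult_frF} and, choosing dual bases of $\tV_0$ and $\tW_0$, it is isomorphic to $M_k(\cD)$ endowed with an elementary $G$-grading, as spelled out just before the statement.

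Next, I would verify that $\cI$ is directed and exhausts $\cR$. Given $(\tV_1,\tW_1),(\tV_2,\tW_2)\in\cI$, set $\tV'=\tV_1+\tV_2$ and $\tW'=\tW_1+\tW_2$ and apply \cite[Lemma 5.7]{Bar}, cited in the discussion above, to enlarge $\tV'$ and $\tW'$ to finite-dimensional subspaces on which the form is nondegenerate; replacing each of these by the sum of its homogeneous components produces an element of $\cI$ dominating both given pairs. Any element of $\cR$ is a finite $\FF$-linear combination of elementary tensors $\tilde v\ot d\ot\tilde w$, hence involves only finitely many homogeneous vectors in $\tV$ and in $\tW$, and so lies in some $\cR_{(\tV_0,\tW_0)}$. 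Since the inclusions $\cR_{(\tV_0,\tW_0)}\hookrightarrow\cR_{(\tV'_0,\tW'_0)}$ for $(\tV_0,\tW_0)\le(\tV'_0,\tW'_0)$ are graded $\FF$-algebra monomorphisms, I conclude $\cR=\varinjlim_{\cI}\cR_{(\tV_0,\tW_0)}$.

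The only real subtlety, rather than a genuine obstacle, is preserving the graded structure when passing to a nondegenerate finite-dimensional restriction of the form: the enlargement lemma a priori produces possibly non-graded subspaces, and one must appeal to \eqref{eq:dual_stubs} in order to restrict to homogeneous components without destroying nondegeneracy (the pairing decouples across distinct isotypic components indexed by $G/T$ and $T\backslash G$). This is exactly the "without loss of generality" step in the paragraph preceding the statement, so once it is invoked the rest of the argument is routine bookkeeping.
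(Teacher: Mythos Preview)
Your approach is exactly the paper's: reduce to $\frF(G,\cD,\tV,\tW)$ via Corollary~\ref{classification_iso_assoc_graded_simple}, invoke the enlargement lemma \cite[Lemma~5.7]{Bar}, and observe that each $\tV_0\ot_\Delta\cD\ot_\Delta\tW_0$ is a graded matrix algebra with elementary grading.

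There is one technical slip in your directedness step. You apply the enlargement lemma to $\tV'=\tV_1+\tV_2$ and $\tW'=\tW_1+\tW_2$, obtaining possibly non-graded $\tV_0,\tW_0$, and then propose to ``replace each of these by the sum of its homogeneous components''. Passing to graded hulls enlarges both sides of the pairing simultaneously, and this can destroy nondegeneracy: for instance, with two cosets $A,B$ and dual bases $e_i,f_i$, the pair $\tV_0=\Delta(e_1+e_2)$, $\tW_0=\Delta f_1$ is nondegenerate, but the graded hull of $\tV_0$ is $\Delta e_1\oplus\Delta e_2$, on which $f_1$ kills $e_2$. The clean fix, which is what both you and the paper are really gesturing at with the appeal to \eqref{eq:dual_stubs}, is to apply the enlargement lemma \emph{block by block}: since $\tV'$ and $\tW'$ are already graded, only finitely many cosets $A\in G/T$ are involved, and for each one the lemma (applied inside $\tV_A$ and $\tW_{A^{-1}}$) yields $(\tV_0)_A$ and $(\tW_0)_{A^{-1}}$ with nondegenerate pairing; the direct sum is then automatically graded and nondegenerate. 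With this correction your argument is complete and coincides with the paper's.
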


\subsection{Classification of $G$-gradings on the algebras $\frF_\Pi(U)$}

In this work, we are primarily interested in the case $\cR=\frF_\Pi(U)$ where $U$ is a vector space over $\FF$ and $\Pi$ is a total subspace of $U^\ast$. We will assume that $\FF$ is {\em algebraically closed}. Then the algebras of the form $\frF_\Pi(U)$ have the following abstract characterization: they are precisely the locally finite simple algebras with minimal left ideals. Indeed, $\frF_\Pi(U)=U\otimes\Pi$ is a direct limit of matrix algebras over $\FF$ and hence is simple and locally finite. Conversely, if $\cR$ is a locally finite simple algebra with minimal left ideals, then $\cR$ is isomorphic to $\frF_\Pi(U)$ where $U$ is a right vector space over a division algebra $\Delta$ and $\Pi$ is a total subspace of $U^\ast$. But $\Delta$ is isomorphic to a subalgebra of $\cR$, hence algebraic over $\FF$. Since $\FF$ is algebraically closed, this implies $\Delta=\FF$.

If $\cR$ is given a $G$-grading, then $\cR$ is graded simple with minimal graded left ideals (Lemma \ref{lFPGGP}), so we can apply Corollary \ref{classification_iso_assoc_graded_simple}. Hence $\cR$ is isomorphic to some $\frF(G,\cD,\tV,\tW)$ as a graded algebra. We claim that, disregarding the grading, $\cD$ is isomorphic to $M_\ell(\FF)$ for some $\ell$. 

Recall from the proof of Theorem \ref{theorem_graded_primitive} that we can represent $\cR$ as $\CF{I}{J}$ where $I=\cR\veps$ is a minimal graded left ideal, $\veps$ is a homogeneous idempotent, and $J=\veps\cR$. Recall also that $\cD=\Endgr_\cR(I)$ coincides with $\End_\cR(I)$ and is isomorphic to $\veps\cR\veps$. It is known that $\cR$ is semisimple as a left or right $\cR$-module (see e.g. \cite[IV.9]{JSR}). In fact, it is easy to see that, if $\cR$ is represented as $\frF_\Pi(U)$, then the mapping $U_0\mapsto U_0\ot\Pi$ is a one-to-one correspondence between the subspaces of $U$ and the right ideals of $\cR$ whereas the mapping $\Pi_0\mapsto U\ot\Pi_0$ is a one-to-one correspondence between the subspaces of $\Pi$ and the left ideals of $\cR$. Hence we can write $I=\cR\veps_1\oplus\cdots\oplus\cR\veps_\ell$ where $\veps_i$ are orthogonal idempotents with $\veps_1+\cdots+\veps_\ell=\veps$ and $\cR\veps_i$ are minimal (ungraded) left ideals. Each of the $\cR\veps_i$ is isomorphic to $U$ as a left $\cR$-module. Since $\End_\cR(U)=\FF$, it follows that the algebra $\End_\cR(I)$ is isomorphic to $M_\ell(\FF)$, completing the proof of the claim. 

If $\cR$ is represented as $\frF_\Pi(U)$, we can construct this isomorphism explicitly. Namely, write $I=U\ot\Pi_0$ where $\Pi_0\subset\Pi$ is an $\ell$-dimensional subspace and select $U_0\subset U$ such that the restriction of the bilinear form $\Pi\times U\to\FF$ to $\Pi_0\times U_0$ is nondegenerate. Let $\{e_1,\ldots,e_\ell\}$ be a basis of $U_0$ and let $\{e^1,\ldots,e^\ell\}$ be the dual basis of $\Pi_0$. Then we can take $\veps_i=e_i\ot e^i$, so $\cR\veps_i=U\ot\FF e^i$, and the elements $e_i\ot e^j$ constitute a basis of matrix units for $\veps\cR\veps$. 

So, $\cD$ is a matrix algebra which is given a $G$-grading that makes it a graded division algebra. Such gradings are well known in the literature under the names of {\em division gradings}, {\em fine gradings} (because all homogeneous components are $1$-dimensional) and, in the case of abelian $G$, also {\em Pauli gradings} (because generalized Pauli matrices can be used to construct them). If we choose, for each $t\in T$, a nonzero homogeneous element $X_t\in\cD$ of degree $t$, then $\cD_t=\FF X_t$ and hence $X_t X_{t'}=\sigma(t,t')X_{tt'}$ for some $2$-cocycle $\sigma\colon T\times T\to\FF^\times$. This shows that the graded algebra $\cD$ is isomorphic to a {\em twisted group algebra} $\FF^\sigma T$ (with its natural $T$-grading regarded as a $G$-grading). In the case of abelian $G$, division gradings on $M_\ell(\FF)$ by $G$ are classified up to isomorphism in \cite{BK}. Namely, the isomorphism classes of such gradings are in bijection with the pairs $(T,\beta)$ where $T\subset G$ is a subgroup of order $\ell^2$ and $\beta\colon T\times T\to\FF^\times$ is a nondegenerate alternating bicharacter. Here $T$ is the support of the grading and $\beta$ is given by $\beta(t,t')=\sigma(t,t')/\sigma(t',t)$, so we get 
\[
X_t X_{t'}=\beta(t,t')X_{t'} X_t\quad\mbox{for all}\quad t,t'\in T.
\]
Note that $\chr{\FF}$ cannot divide the order of $T$.

We want to understand the relation between, on the one hand, $\tV$ and $\tW$ and, on the other hand, $U$ and $\Pi$. We will use $I$ as $V$ and $J$ as $W$. The mapping $u\mapsto u\ot e^i$ is an isomorphism of left $\cR$-modules $U\to\cR\veps_i$. Also, the mapping $f\mapsto e_i\ot f$ is an isomorphism of right  $\cR$-modules $\Pi\to\veps_i\cR$. This allows us to identify $I$ with $U^\ell$ and $J$ with $\Pi^\ell$. Recall that the $\cD$-bilinear form $J\times I\to\cD$ is just the multiplication of $\cR$. Hence, under the above identifications, this  $\cD$-bilinear form maps $(f_1,\ldots,f_\ell)\in\Pi^\ell$ and $(u_1,\ldots,u_\ell)\in U^\ell$ to the matrix $[(f_i,u_j)]_{i,j}$ in $\cD$. Let $M$ be the unique simple right $\cD$-module and let $N$ be the unique simple left $\cD$-module, i.e., $M$ is $\FF^\ell$ written as rows and $N$ is $\FF^\ell$ written as columns. Then, disregarding the $G$-gradings on $I$ and $J$, we can identify $I$ with $U\ot M$ as an  $(\cR,\cD)$-bimodule and also identify $J$ with $N\ot\Pi$ as a $(\cD,\cR)$-bimodule. Under these identifications, the $\cD$-bilinear form $J\times I\to\cD$ coincides with the extension of the $\FF$-linear form $\Pi\times U\to\FF$. Now, we have $U\cong I\ot_\cD N$ and $\Pi\cong M\ot_\cD J$ as $\cR$-modules. If we identify $U$ with $I\ot_\cD N$ and $\Pi$ with $M\ot_\cD J$, then the $\FF$-bilinear form $\Pi\ot U\to\FF$ is related to the $\cD$-bilinear form $J\times I\to\cD$ by the following formula:
\begin{equation}\label{eq:myxn}
(m\ot y,x\ot n)=m(y,x)n\quad\mbox{for all}\quad m\in M, n\in N, x\in I, y\in J,
\end{equation}
where the right-hand side is the scalar in $\FF$ obtained by multiplying a row, a matrix and a column.  
Recall that $\tV$ and $\tW$ associated to $V$ and $W$ are defined in such a way that $V=\tV\ot\cD$ and $W=\cD\ot\tW$ as ungraded $\cD$-modules, and the $\cD$-bilinear form $W\times V\to\cD$ is the extension of the $\FF$-bilinear form $\tW\times\tV\to\FF$ (recall that $\cD_e=\FF$). Hence $U=V\ot_\cD N=(\tV\ot\cD)\ot_\cD N\cong\tV\ot N$ and $\Pi=M\ot_\cD W=M\ot_\cD(\cD\ot\tW)\cong M\ot\tW$ as vector spaces over $\FF$, where the isomorphism $(\tV\ot\cD)\ot_\cD N\cong\tV\ot N$ is given by $\tilde v\ot d\ot n\mapsto \tilde v\ot dn$ and the isomorphism $M\ot_\cD(\cD\ot\tW)\to M\ot\tW$ is given by $m\ot d\ot \tilde w\mapsto md\ot \tilde w$. Substituting $x=\tilde v\ot a$ and $y=b\ot \tilde w$, for any $\tilde v\in\tV$, $\tilde w\in\tW$, $a,b\in\cD$, into \eqref{eq:myxn}, we obtain 
\[
(m\ot b\ot \tilde w,\tilde v\ot a\ot n)=m(b\ot \tilde w,\tilde v\ot a)n=mb(\tilde w,\tilde v)an.
\] 
Hence, if we identify $U$ with $\tV\ot N$ and $\Pi$ with $M\ot\tW$, then the $\FF$-bilinear forms $\Pi\times U\to\FF$ and $\tW\times\tV\to \FF$ are related by the following formula:
\begin{equation}\label{eq:pairing_U_Pi}
(m\ot\tilde w,\tilde v\ot n)=(\tilde w,\tilde v)mn\quad\mbox{for all}\quad m\in M, n\in N, \tilde v\in \tV, \tilde w\in \tW.
\end{equation}
In other words, we can identify $U$ with $\tV^\ell$ and $\Pi$ with $\tW^\ell$ so that the above $\FF$-bilinear forms are related as follows: 
\begin{equation}\label{eq:two_bilinear_forms}
((\tilde w_1,\ldots,\tilde w_\ell),(\tilde v_1,\ldots,\tilde v_\ell))=\sum_{i=1}^\ell (\tilde w_i,\tilde v_i)\quad\mbox{for all}\quad \tilde v_i\in\tV\;\mbox{ and }\;\tilde w_i\in\tW.
\end{equation}

Finally, we observe that Definition \ref{df:datum_finitary_general} simplifies because $\cD_e=\cD'_e=\FF$. For brevity, an isomorphism of triples from $(\FF,U,\Pi)$ to $(\FF,U',\Pi')$ will be called an {\em isomorphism of pairs} from $(U,\Pi)$ to $(U',\Pi')$. In other words, pairs $(U,\Pi)$ and $(U',\Pi')$ are isomorphic if and only if there exists an isomorphism $U\to U'$ of vector spaces (over $\FF$) whose adjoint $(U')^\ast\to U^\ast$ maps $\Pi'$ onto $\Pi$. 

\begin{df}\label{df:datum_finitary}
We will write $(\cD,\tV,\tW)\sim (\cD',\tV',\tW')$ if there is an element $g\in G$ such that ${}^{[g^{-1}]}\cD^{[g]}\cong\cD'$ as graded algebras and, for any $A\in G/T$, we have $(\tV_{A},\tW_{A^{-1}})\cong(\tV'_{Ag},\tW'_{g^{-1}A^{-1}})$. 
\end{df}

To summarize:

\begin{theorem}\label{lfd}
Let $\cR$ be a locally finite simple algebra with minimal left ideals over an algebraically closed field $\FF$. If $\cR$ is given a grading by a group $G$, then $\cR$ is isomorphic to some $\frF(G,\cD,\tV,\tW)$ where $\cD$ is a matrix algebra over $\FF$ equipped with a division grading. Conversely, if $\cD=M_\ell(\FF)$ with a division grading, then $\frF(G,\cD,\tV,\tW)$ is a locally finite simple algebra with minimal left ideals, which can be represented as $\frF_\Pi(U)$ where $U=\tV^\ell$, $\Pi=\tW^\ell$, and the nondegenerate bilinear form $\Pi\times U\to\FF$ is given by \eqref{eq:two_bilinear_forms}. Moreover, two such graded algebras $\frF(G,\cD,\tV,\tW)$ and $\frF(G,\cD',\tV',\tW')$ are isomorphic if and only if $(\cD,\tV,\tW)\sim (\cD',\tV',\tW')$ in the sense of Definition \ref{df:datum_finitary}.\qed
\end{theorem}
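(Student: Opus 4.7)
My plan is to assemble the theorem from the structural results established above; there is essentially nothing new to prove, only to verify that the pieces fit together correctly.

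For the first assertion, suppose $\cR$ is a locally finite simple algebra with minimal left ideals and is equipped with a $G$-grading. Since simple algebras are primitive, Lemma \ref{lFPGGP} gives that $\cR$ is graded primitive with minimal graded left ideals, and since $\cR$ is simple it is certainly graded simple. Corollary \ref{classification_iso_assoc_graded_simple} then yields an isomorphism of graded algebras $\cR\cong\frF(G,\cD,\tV,\tW)$ for some triple $(\cD,\tV,\tW)$. To identify $\cD$ as a matrix algebra I would follow the argument in the paragraph preceding the theorem: pick a minimal graded left ideal $I=\cR\veps$ with $\veps$ a homogeneous idempotent, so that $\cD\cong\Endgr_\cR(I)=\End_\cR(I)\cong\veps\cR\veps$. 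Writing $\cR=\frF_\Pi(U)$, semisimplicity of $I$ as a left $\cR$-module (equivalently, of the subspace $\Pi_0\subset\Pi$ giving $I=U\ot\Pi_0$) yields a decomposition $I=\cR\veps_1\oplus\cdots\oplus\cR\veps_\ell$ into minimal left ideals each isomorphic to $U$; since $\FF$ is algebraically closed, $\End_\cR(U)=\FF$ and hence $\cD\cong M_\ell(\FF)$.

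For the converse, start from $\cD=M_\ell(\FF)$ with a division grading and a triple $(\cD,\tV,\tW)$ as in Section \ref{ssGSAMI}. The graded algebra $\frF(G,\cD,\tV,\tW)=\fvw$ with $V=\tV\ot_\FF\cD$ and $W=\cD\ot_\FF\tW$ is graded simple with minimal graded left ideals by Theorem \ref{theorem_graded_primitive}. By Corollary \ref{graded_Litoff} it is a direct limit of matrix algebras over $\cD$, hence of matrix algebras over $\FF$ (since $\cD$ is finite-dimensional), so it is locally finite and, in particular, ungraded simple. The concrete realization as $\frF_\Pi(U)$ with $U=\tV^\ell$ and $\Pi=\tW^\ell$, together with the bilinear form \eqref{eq:two_bilinear_forms}, is exactly what the discussion culminating in \eqref{eq:pairing_U_Pi} establishes: choose simple right and left $\cD$-modules $M=\FF^\ell$ (rows) and $N=\FF^\ell$ (columns), identify $U=I\ot_\cD N\cong\tV\ot N$ and $\Pi=M\ot_\cD J\cong M\ot\tW$, and then \eqref{eq:myxn} specializes to \eqref{eq:two_bilinear_forms}.

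Finally, the isomorphism criterion is Corollary \ref{classification_iso_assoc_graded_simple} combined with the observation that, because $\Delta=\cD_e=\FF$ and $\Delta'=\cD'_e=\FF$, an isomorphism of triples from $(\FF,\tV_A,\tW_{A^{-1}})$ to $(\FF,\tV'_{Ag},\tW'_{g^{-1}A^{-1}})$ is nothing but an isomorphism of the pairs $(\tV_A,\tW_{A^{-1}})$ and $(\tV'_{Ag},\tW'_{g^{-1}A^{-1}})$ in the sense defined just before Definition \ref{df:datum_finitary}; the compatibility with $\psi_0$ on the $\FF$-component is automatic since $\psi_0$ restricts to the identity on $\FF$. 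Hence Definition \ref{df:datum_finitary} is precisely the specialization of Definition \ref{df:datum_finitary_general} to the present setting, and Corollary \ref{classification_iso_assoc_graded_simple} translates directly into the claimed ``$\sim$'' criterion. I do not expect any real obstacle: the delicate work (the graded structure theorem, the graded isomorphism theorem, and the passage between the pairings on $\tW\times\tV$ and $\Pi\times U$) has all been done; the theorem is a bookkeeping summary of those earlier results.
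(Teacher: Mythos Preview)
Your proposal is correct and follows essentially the same approach as the paper: the theorem is indeed a summary of the preceding discussion, assembled from Lemma~\ref{lFPGGP}, Corollary~\ref{classification_iso_assoc_graded_simple}, the identification $\cD\cong\End_\cR(I)\cong M_\ell(\FF)$ via the decomposition of $I$ into minimal left ideals, the computation leading to \eqref{eq:two_bilinear_forms}, and the observation that Definition~\ref{df:datum_finitary} specializes Definition~\ref{df:datum_finitary_general} when $\cD_e=\FF$. One small remark: in your converse paragraph the phrase ``and, in particular, ungraded simple'' is not a consequence of local finiteness but of the direct-limit-of-simple-algebras structure (or, more cleanly, of the representation $\frF_\Pi(U)$ you derive immediately afterward, which also gives minimal left ideals); you may wish to reorder so that the $\frF_\Pi(U)$ realization comes first and the remaining properties are read off from it.
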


If $G$ is abelian, then ${}^{[g^{-1}]}\cD^{[g]}=\cD$ and the isomorphism class of $\cD$ is determined by $(T,\beta)$. Hence we may write $\frF(G,T,\beta,\tV,\tW)$ for $\frF(G,\cD,\tV,\tW)$.

\begin{df}\label{df:datum_finitary_ab}
We will write $(\tV,\tW)\sim (\tV',\tW')$ if there is an element $g\in G$ such that, for any $A\in G/T$, we have $(\tV_{A},\tW_{A^{-1}})\cong(\tV'_{Ag},\tW'_{g^{-1}A^{-1}})$.
\end{df}

\begin{corollary}\label{lfd_abelian_G}
If in Theorem \ref{lfd} the group $G$ is abelian, then $\cR$ is isomorphic to some $\frF(G,T,\beta,\tV,\tW)$. Two such graded algebras $\frF(G,T,\beta,\tV,\tW)$ and $\frF(G,T',\beta',\tV',\tW')$ are isomorphic if and only if $T=T'$, $\beta=\beta'$ and $(\tV,\tW)\sim (\tV',\tW')$ in the sense of Definition \ref{df:datum_finitary_ab}.\qed 
\end{corollary}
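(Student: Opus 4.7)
The plan is to obtain this corollary as a direct specialization of Theorem \ref{lfd} and Definition \ref{df:datum_finitary} to the abelian case. First I would invoke Theorem \ref{lfd} to write $\cR\cong\frF(G,\cD,\tV,\tW)$ with $\cD=M_\ell(\FF)$ equipped with a division grading. Since $G$ is abelian, such gradings on $M_\ell(\FF)$ are, by the classification recalled from \cite{BK} in the discussion preceding Theorem \ref{lfd}, parametrized up to graded-algebra isomorphism by the pairs $(T,\beta)$ where $T\subset G$ is a subgroup of order $\ell^2$ and $\beta\colon T\times T\to\FF^\times$ is a nondegenerate alternating bicharacter. Picking such a representative $(T,\beta)$ for the graded isomorphism class of $\cD$ justifies the notation $\frF(G,T,\beta,\tV,\tW)$ and settles the existence part.

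For the isomorphism criterion I would unpack Definition \ref{df:datum_finitary} in the abelian setting. The key simplification is that when $G$ is abelian, the double shift ${}^{[g^{-1}]}\cD^{[g]}$ coincides with $\cD$ as a $G$-graded algebra: a homogeneous element of degree $t$ acquires new degree $g^{-1}tg=t$. Hence the existence of the graded-algebra isomorphism $\psi_0\colon {}^{[g^{-1}]}\cD^{[g]}\to\cD'$ demanded by Definition \ref{df:datum_finitary} is equivalent, via the classification of the previous paragraph, to $T=T'$ and $\beta=\beta'$.

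It remains to identify what the rest of Definition \ref{df:datum_finitary} says about $(\tV,\tW)$ and $(\tV',\tW')$. Because $\Delta=\cD_e=\FF=\cD'_e=\Delta'$ and any $\FF$-algebra automorphism of $\FF$ is $\id_{\FF}$, the restriction of $\psi_0$ to $\Delta\to\Delta'$ is automatically the identity. Therefore the clause asking, for each $A\in G/T$, for an isomorphism of triples from $(\FF,\tV_A,\tW_{A^{-1}})$ to $(\FF,\tV'_{Ag},\tW'_{g^{-1}A^{-1}})$ whose $\FF$-component is the restriction of $\psi_0$ reduces to the existence of an isomorphism of pairs $(\tV_A,\tW_{A^{-1}})\cong(\tV'_{Ag},\tW'_{g^{-1}A^{-1}})$, which is precisely Definition \ref{df:datum_finitary_ab}. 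I do not expect a genuine obstacle here: the corollary is essentially bookkeeping, the only nontrivial input being the already-quoted parametrization of division gradings on $M_\ell(\FF)$ by pairs $(T,\beta)$ and the observation that conjugation is trivial in an abelian group.
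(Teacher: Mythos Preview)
Your proposal is correct and matches the paper's approach: the corollary is stated with a \qed, the only justification being the sentence immediately preceding it, which records that for abelian $G$ one has ${}^{[g^{-1}]}\cD^{[g]}=\cD$ and that the isomorphism class of $\cD$ is determined by $(T,\beta)$. Your argument unpacks exactly this reduction of Theorem~\ref{lfd} and Definition~\ref{df:datum_finitary} to Definition~\ref{df:datum_finitary_ab}.
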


In the case where $\cR$ has countable dimension, we can classify $G$-gradings on $\cR$ in combinatorial terms. Clearly, $\frF_\Pi(U)$ has countable dimension if and only if both $\Pi$ and $U$ have countable dimension. It is known that then there exist dual bases in $U$ and $\Pi$, hence all such pairs $(U,\Pi)$ are isomorphic and there is only one such algebra $\cR$, which is denoted by $M_\infty(\FF)$. We will state the classification of $G$-gradings in a form that also applies to $M_n(\FF)$, for which this result is known under the assumption that $G$ is abelian \cite{BK}. Up to isomorphism, the pairs $(\tV_{A},\tW_{A^{-1}})$ can be encoded by the function $\kappa\colon G/T\to\{0,1,2,\ldots,\infty\}$ that sends $A$ to $\dim\tV_A$. Note that the support of the function $\kappa$ is finite or countable, so $|\kappa|\bydef\sum_{A\in G/T}\kappa(A)$ is defined as an element of $\{0,1,2,\ldots,\infty\}$. We will denote the associated graded algebra $\frF(G,\cD,\tV,\tW)$ by $\frF(G,\cD,\kappa)$. Finally, for any $g\in G$, define $\kappa^g\colon G/(g^{-1}Tg)\to\{0,1,2,\ldots,\infty\}$ by setting $\kappa^g(Ag)\bydef\kappa(A)$ for all $A\in G/T$.

\begin{corollary}\label{gradings_on_M_infinity}
Let $\FF$ be an algebraically closed field and let $\cR=M_n(\FF)$ where $n\in\NN\cup\{\infty\}$. If $\cR$ is given a grading by a group $G$, then $\cR$ is isomorphic to some $\frF(G,\cD,\kappa)$ where $\cD=M_\ell(\FF)$, with $\ell\in\NN$ and $n=|\kappa|\ell$, is equipped with a division grading. Moreover, two such graded algebras $\frF(G,\cD,\kappa)$ and $\frF(G,\cD',\kappa')$ are isomorphic if and only if there exists $g\in G$ such that ${}^{[g^{-1}]}\cD^{[g]}\cong\cD'$ as graded algebras and $\kappa^g=\kappa'$.\qed
\end{corollary}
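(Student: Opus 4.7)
The plan is to combine Theorem~\ref{lfd} with the classification of pairs $(V,W)$ of countable or finite dimension, where $W$ is a total subspace of $V^\ast$. First, I would observe that $\cR=M_n(\FF)$ is realized as $\frF_\Pi(U)$ with $\dim U=\dim\Pi=n$ (for $n$ finite, $\Pi=U^\ast$; for $n=\infty$, both have countable dimension, as noted just before the corollary). Applying Theorem~\ref{lfd}, the $G$-grading on $\cR$ yields an isomorphism $\cR\cong\frF(G,\cD,\tV,\tW)$ with $\cD=M_\ell(\FF)$ equipped with a division grading, and moreover $U\cong\tV^\ell$, $\Pi\cong\tW^\ell$ as plain $\FF$-vector spaces. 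In particular $\dim\tV=\dim\tW=n/\ell$, so $\ell$ divides $n$ (with the obvious convention when $n=\infty$).

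Next, I would analyze the pair $(\tV_A,\tW_{A^{-1}})$ for each $A\in G/T$. By~\eqref{eq:dual_stubs}, $\tW_{A^{-1}}$ sits inside $\tV_A^\ast$ as a total subspace, and both have countable (or finite) dimension. The lemma from \cite{Bar} already invoked in Section~\ref{ssGSAMI} produces, by a standard exhaustion argument, mutually dual bases of $\tV_A$ and $\tW_{A^{-1}}$; this shows that $\dim\tV_A=\dim\tW_{A^{-1}}$ and that the isomorphism class of the pair $(\tV_A,\tW_{A^{-1}})$ depends only on $\kappa(A):=\dim\tV_A\in\{0,1,2,\ldots,\infty\}$. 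Choosing, for each value of $\kappa(A)$, a canonical model pair then singles out the graded algebra $\frF(G,\cD,\kappa)$ of the statement; summing dimensions over $A$ gives $|\kappa|=\dim\tV=n/\ell$, so $n=|\kappa|\ell$.

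For the classification up to isomorphism, I would invoke the last part of Theorem~\ref{lfd} together with Definition~\ref{df:datum_finitary}. The graded algebras $\frF(G,\cD,\kappa)$ and $\frF(G,\cD',\kappa')$ are isomorphic if and only if $(\cD,\tV,\tW)\sim(\cD',\tV',\tW')$, which unpacks as follows: there exists $g\in G$ such that ${}^{[g^{-1}]}\cD^{[g]}\cong\cD'$ as graded algebras (forcing $T'=g^{-1}Tg$ and hence providing a bijection $G/T\to G/T'$ via $A\mapsto Ag$) and, for every $A\in G/T$, the pairs $(\tV_A,\tW_{A^{-1}})$ and $(\tV'_{Ag},\tW'_{g^{-1}A^{-1}})$ are isomorphic. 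By the previous paragraph, the latter condition is equivalent to $\kappa(A)=\kappa'(Ag)$ for all $A$, i.e.\ $\kappa^g=\kappa'$.

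The main obstacle is precisely the claim in the second paragraph that the pair $(\tV_A,\tW_{A^{-1}})$ is determined up to isomorphism by $\dim\tV_A$ alone: it is routine in the finite-dimensional case but in the countable-dimensional case hinges on the inductive construction of mutually dual bases. Everything else is a direct bookkeeping translation of the already-proved Theorem~\ref{lfd} into the combinatorial datum $(\cD,\kappa)$.
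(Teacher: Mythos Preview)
Your proposal is correct and follows essentially the same route as the paper: the corollary is stated with a \qed\ precisely because the preceding paragraph already records that in the countable (or finite) case the pairs $(\tV_A,\tW_{A^{-1}})$ admit dual bases and are thus determined by $\dim\tV_A$, after which everything is a direct translation of Theorem~\ref{lfd} and Definition~\ref{df:datum_finitary} into the combinatorial datum $(\cD,\kappa)$. Your identification of the one nontrivial point---the existence of dual bases in the countably infinite case---matches the paper's own remark just before the corollary.
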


The graded algebra $\frF(G,\cD,\kappa)$ can be constructed explicitly as follows. 
Select a left transversal $S$ for $T$ and let $\gamma(A)$ be the unique element of $A\cap S$. For each $A\in G/T$, select dual bases $\{v_i(A)\}$ and $\{v^j(A)\}$ for $\tV_A$ and $\tW_A$, respectively, consisting each of $\kappa(A)$ vectors. Then the algebra $\frF(G,\cD,\kappa)$ has a basis 
\[
\{E^{A,B}_{i,j}(t)\;|\;A,B\in G/T,\,i,j\in\NN,i\le\kappa(A),j\le\kappa(B),\,t\in T\},
\] 
where $E^{A,B}_{i,j}(t)=v_i(A)X_t\ot v^j(B)=v_i(A)\ot X_t v^j(B)$. Equations \eqref{eq:mult_frF} and \eqref{eq:grad_frF} imply that the multiplication is given by the formula (using Kronecker delta):
\[
E^{A,B}_{i,j}(t)E^{A',B'}_{i',j'}(t')=\delta_{B,A'}\delta_{j,i'}\sigma(t,t')E^{A,B'}_{i,j'}(tt')
\]
and the $G$-grading is given by 
\[
\deg E^{A,B}_{i,j}(t)=\gamma(A)t\gamma(B)^{-1}.
\]
Thus we recover Theorem 5 in \cite{BZ10}, which asserts the existence of such a basis under the assumption that $G$ is a finite abelian group and $\FF$ is algebraically closed of characteristic zero. Theorem 6 in the same paper gives our condition for isomorphism of two $G$-gradings in the special case $\cD=\FF$.

\subsection{Antiautomorphisms and sesquilinear forms}\label{ssASF}

We want to investigate under what conditions a graded algebra described by Theorem \ref{theorem_graded_primitive} admits an antiautomorphism. So, we temporarily return to the general setting: $\cR$ is a $G$-graded primitive algebra (or ring) with minimal graded left ideals. 

We may assume $\CF{V}{W}\subset\cR\subset\CL{V}{W}$ where $V$ is a right vector space over a graded division algebra $\cD$, $W$ is a left vector space over $\cD$, and $W$ is identified with a total graded subspace of $V^{\gr\ast}$ by virtue of a $\cD$-bilinear form $(\,,\,)$. Thus, we have 
\begin{equation}\label{eq:D_bilinear}
(dw,v)=d(w,v)\quad\mbox{and}\quad (w,vd)=(w,v)d\quad\mbox{for all}\quad v\in V,\,w\in W,\,d\in\cD.
\end{equation}
Note that, since the adjoint of any operator $r\in\cR$ leaves $W$ invariant, $W$ becomes a graded right $\cR$-module such that 
\begin{equation}\label{eq:R_associative}
(wr,v)=(w,rv)\quad\mbox{for all}\quad v\in V,\,w\in W,\,r\in\cR.
\end{equation}
It follows that $\CF{W}{V}\subset\cR^{\mathrm{op}}\subset\CL{W}{V}$ where $W$ is regarded as a right vector space over $\cD^{\mathrm{op}}$, $V$ as a left vector space over $\cD^{\mathrm{op}}$, and $V$ is identified with a total graded subspace of $W^{\gr\ast}$ by virtue of $(v,w)^{\mathrm{op}}\bydef(w,v)$. (The gradings on all these objects are by the group $G^{\mathrm{op}}$.)

Now suppose that we have an antiautomorphism $\vphi$ of the graded algebra $\cR$. Since $\cS\bydef\fvw$ is the unique minimal graded two-sided ideal of $\cR$, we have $\vphi(\cS)=\cS$. Thus $\vphi$ restricts to an antiautomorphism of the graded simple algebra $\cS$. It is known (see \cite{BShZ}) that if a graded simple algebra admits an antiautomorphism, then the support of the grading generates an abelian group. Now observe that any element of the support of $\Endgr_\cD(V)$ has the form $gh^{-1}$ where $g$ and $h$ are in the support of $V$, and all elements of this form already occur in the support of $\fvw$. Hence, the support of $\cR$ equals the support of $\cS$ and generates an abelian group. For this reason, we will assume from now on that $G$ is {\em abelian}. 

Applying Theorem \ref{isomorphism_graded_simple} to the isomorphism $\vphi\colon\cR\to\cR^{\mathrm{op}}$ and taking into account that $G$ is abelian, we see that there exist $g_0\in G$ and an isomorphism $(\vphi_0,\vphi_1,\vphi_2)$ from $(\cD,V^{[g_0]},W^{[g_0^{-1}]})$ to $(\cD^{\mathrm{op}},W,V)$ such that $\vphi(r)=\vphi_1\circ r\circ\vphi_1^{-1}$. In particular, $\vphi_0$ is an antiautomorphism of the graded algebra $\cD$ and $\vphi_1$ is $\vphi_0$-semilinear:
\begin{equation}\label{eq:phi0_phi1}
\vphi_1(vd)=\vphi_0(d)\vphi(v)\quad\mbox{for all}\quad v\in V\;\mbox{ and }\;d\in\cD.
\end{equation}
Now define a nondegenerate $\FF$-bilinear form $B\colon V\times V\to\cD$ as follows:
\[
B(u,v)\bydef(\vphi_1(u),v)\quad\mbox{for all}\quad u,v\in V.
\]
Then $B$ has degree $g_0$ when regarded as a map $V\ot V\to\cD$. Combining \eqref{eq:D_bilinear} and \eqref{eq:phi0_phi1}, we see that, over $\cD$, the form $B$ is linear in the second argument and $\vphi_0$-semilinear in the first argument, i.e., 
\begin{equation}\label{eq:sesquilinear}
B(ud,v)=\vphi_0(d)B(u,v)\quad\mbox{and}\quad B(u,vd)=B(u,v)d\quad\mbox{for all}\quad u,v\in V,\,d\in\cD.
\end{equation} 
For brevity, we will say that $B$ is {\em $\vphi_0$-sesquilinear}. 

Applying \eqref{eq:R_associative}, we obtain for all $u,v\in V$ and $r\in\cR$:
\begin{align*}
B(ru,v)&=(\vphi_1(ru),v)=(\vphi_1(u)\vphi(r),v)\\
&=(\vphi_1(u),\vphi(r)v)=B(u,\vphi(r)v),
\end{align*}
which means that $\vphi(r)$ is {\em adjoint to $r$ with respect to $B$}. In particular, $\vphi$ can be recovered from $B$. 

We will need one further property of $B$. Consider 
\[
\bar{B}(u,v)\bydef\vphi_0^{-1}(B(v,u)). 
\]
Then $\bar{B}$ is a nondegenerate $\vphi_0^{-1}$-sesquilinear form of the same degree as $B$. Clearly, we have $\bar{B}(ru,v)=\bar{B}(u,\vphi^{-1}(r)v)$, so $\bar{B}$ is related to $\vphi^{-1}$ in the same way as $B$ is related to $\vphi$. We claim that there exists a $\vphi_0^{-2}$-semilinear isomorphism of graded vector spaces $Q\colon V\to V$ such that
\begin{equation}\label{eq:operator_Q}
\bar{B}(u,v)=B(Qu,v)\quad\mbox{for all}\quad u,v\in V.
\end{equation}
Indeed, $Q=\vphi_1^{-1}\circ\vphi_2^{-1}$ satisfies the requirements: it is clearly an invertible $\vphi_0^{-2}$-semilinear map, homogeneous of degree $e$, and we have
\[
B(Qu,v)=(\vphi_2^{-1}(u),v)=(v,\vphi_2^{-1}(u))^{\mathrm{op}}
=\vphi_0^{-1}((\vphi_1(v),u))=\bar{B}(u,v).
\]
It is important to note that the adjoint $Q^*=(\vphi_2^{-1})^*\circ(\vphi_1^{-1})^*=\vphi_1\circ\vphi_2$ maps $W$ onto $W$, i.e., $Q$ is a homeomorphism.

\begin{df}\label{weakly_Hermitian}
We will say that a nondegenerate homogeneous $\vphi_0$-sesquilinear form $B\colon V\times V\to\cD$ is {\em weakly Hermitian} if there exists a $\vphi_0^{-2}$-semilinear isomorphism $Q\colon V\to V$ of graded vector spaces such that \eqref{eq:operator_Q} holds. (Note that, since $B$ is nondegenerate, \eqref{eq:operator_Q} uniquely determines $Q$.)
\end{df}

The following is a graded version of the main result in \cite[IV.12]{JSR}.

\begin{theorem}\label{antiauto_graded_simple}
Let $G$ be an abelian group. Let $\cD$ be a $G$-graded division algebra (or ring), let $V$ be a graded right vector space over $\cD$, and let $W$ be a total graded subspace of $V^{\gr\ast}$. Let $\cR$ be a $G$-graded algebra (or ring) such that 
\[
\fvw\subset\cR\subset\lvw.
\]
If $\vphi$ is an antiautomorphism of the graded algebra $\cR$, then there exist an antiautomorphism $\vphi_0$ of the graded algebra $\cD$ and a weakly Hermitian nondegenerate homogeneous $\vphi_0$-sesquilinear form $B:V\times V\to\cD$, such that the following conditions hold:
\begin{enumerate}
\item[(a)] the mapping $V\to V^{\gr\ast}\colon u\mapsto f_u$, where $f_u(v)\bydef B(u,v)$ for all $v\in V$, sends $V$ onto $W$;
\item[(b)] for any $r\in\cR$, $\vphi(r)$ is the adjoint to $r$ with respect to $B$, i.e., $B(ru,v)=B(u,\vphi(r)v)$, for all $u,v\in V$.
\end{enumerate}
If $\vphi'_0$ is an antiautomorphism of $\cD$ and $B'$ is a $\vphi'_0$-sesquilinear form $V\times V\to\cD$ that define $W$ and $\vphi$ as in (a) and (b), then there exists a nonzero homogeneous $d\in\cD$ such that $B'=dB$ and $\vphi'_0(x)=d\vphi_0(x)d^{-1}$ for all $x\in\cD$.

As a partial converse, if $\vphi_0$ is an antiautomorphism of the graded algebra $\cD$ and $B:V\times V\to\cD$ is a weakly Hermitian nondegenerate homogeneous $\vphi_0$-sesquilinear form, then the adjoint with respect to $B$ defines an antiautomorphism $\vphi$ of the $G$-graded algebra $\CL{V}{W}$, with $W=\{f_u\;|\;u\in V\}$, such that $\vphi(\CF{V}{W})=\CF{V}{W}$.
\end{theorem}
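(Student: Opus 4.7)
The existence of $\vphi_0$ and $B$ fulfilling (a) and (b), together with the weakly Hermitian property, was essentially carried out in the discussion preceding the statement; I will just recap the reduction. Since $\vphi$ restricts to an antiautomorphism of the minimal graded two-sided ideal $\cS=\fvw$, the support of $\cR$ generates an abelian group by~\cite{BShZ}, so reducing to abelian $G$ is harmless. Applying Theorem~\ref{isomorphism_graded_simple} to the isomorphism of graded algebras $\vphi\colon\cR\to\cR^{\mathrm{op}}$ yields $g_0\in G$ and an isomorphism of triples $(\vphi_0,\vphi_1,\vphi_2)$ from $(\cD,V^{[g_0]},W^{[g_0^{-1}]})$ to $(\cD^{\mathrm{op}},W,V)$ implementing $\vphi$. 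Setting $B(u,v)\bydef(\vphi_1(u),v)$ and $Q\bydef\vphi_1^{-1}\circ\vphi_2^{-1}$, the computations already displayed show that $B$ is $\vphi_0$-sesquilinear of degree $g_0$ and satisfies (b), while $Q$ witnesses the weakly Hermitian property via~\eqref{eq:operator_Q}. Condition~(a) is just the bijectivity of $\vphi_1\colon V\to W$.

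For uniqueness, suppose $\vphi_0'$ and $B'$ also satisfy (a) and (b). Define $\vphi_1'\colon V\to W$ by $\vphi_1'(u)\bydef f_u'$; this is a $\vphi_0'$-semilinear bijection (onto by (a)). An elementary calculation using (b) gives $B'(ru,v)=(\vphi_1'(u)\cdot\vphi(r))(v)$ for the right $\cR$-module structure on $W$, so $\vphi_1'$ intertwines the $\cR$-actions. The construction in the proof of Theorem~\ref{isomorphism_graded_simple} then automatically produces a $\vphi_2'$ making $(\vphi_0',\vphi_1',\vphi_2')$ an isomorphism of triples implementing $\vphi$. The uniqueness clause of that theorem, applied to the two triples, supplies a nonzero homogeneous $d\in\cD$ relating them; translating the resulting relations through the opposite-algebra conventions (multiplication by $d$ in $\cD^{\mathrm{op}}$ corresponds to multiplication by $d$ in the reversed order in $\cD$) yields $B'=dB$ and $\vphi_0'(x)=d\vphi_0(x)d^{-1}$.

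For the converse direction, given $\vphi_0$ and a weakly Hermitian $\vphi_0$-sesquilinear form $B$ with associated operator $Q$, put $W\bydef\{f_u\mid u\in V\}$, $\vphi_1(u)\bydef f_u$, and $\vphi_2\bydef Q^{-1}\circ\vphi_1^{-1}$. Rearranging~\eqref{eq:operator_Q} yields $B(v,Q^{-1}u)=\vphi_0(B(u,v))$, which is exactly the compatibility $(\vphi_2(w),\vphi_1(v))^{\mathrm{op}}=\vphi_0((w,v))$ required for $(\vphi_0,\vphi_1,\vphi_2)$ to be an isomorphism of triples into $(\cD^{\mathrm{op}},W,V)$. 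The converse clause of Theorem~\ref{isomorphism_graded_simple} then supplies the antiautomorphism $\vphi$ of $\lvw$, and~\eqref{eq:psi_on_CF} guarantees $\vphi(\fvw)=\fvw$. The main care required throughout is consistency with opposite-algebra conventions when passing between $\cR$ and $\cR^{\mathrm{op}}$ — distinguishing left and right actions on $V$ and $W$, and remembering that the bilinear form identifying $V$ with a total subspace of $W^{\gr\ast}$ is $(v,w)^{\mathrm{op}}\bydef(w,v)$; conceptually the whole theorem is the dictionary $B\leftrightarrow\vphi_1$ applied to Theorem~\ref{isomorphism_graded_simple}.
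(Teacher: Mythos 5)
Your proof is correct and follows essentially the same route as the paper: the forward direction and the weak-Hermitian property come from the discussion preceding the theorem via Theorem \ref{isomorphism_graded_simple}, and for the converse you construct the same isomorphism of triples $(\vphi_0,\vphi_1,\vphi_2)$ with $\vphi_2=Q^{-1}\circ\vphi_1^{-1}$. The only stylistic difference is in the uniqueness step — the paper argues directly that $\vphi_1'\circ\vphi_1^{-1}$ is a nonzero homogeneous element of $\Endgr_\cR(W^\vphi)$ and hence right multiplication by some $d\in\cD$, whereas you funnel this through the uniqueness clause of Theorem \ref{isomorphism_graded_simple}; this is the same computation once unrolled, so there is nothing to object to.
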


\begin{proof}
Given an antiautomorphism $\vphi$, the existence of the pair $(\vphi_0,B)$ is already proved. If $(\vphi'_0,B')$ is another such pair, then the corresponding mapping $u\mapsto f'_u$ is an isomorphism of graded $\cR$-modules $V^{[g'_0]}\to W^\vphi$. Hence $\vphi'_1\circ\vphi_1^{-1}$ is a nonzero homogeneous element of $\Endgr_\cR(W)$, so there exists a nonzero homogeneous $d\in\cD$ such that $\vphi'_1(v)=d\vphi_1(v)$ for all $v\in V$, which implies $B'=dB$. Now the equation $\vphi'_0(x)=d\vphi_0(x)d^{-1}$ follows easily from \eqref{eq:sesquilinear}.

Conversely, for a given antiautomorphism $\vphi_0$ and a form $B$ of degree $g_0$, define $\vphi_1\colon V\to W$ by setting $\vphi_1(u)=f_u$. This is a homogeneous $\vphi_0$-semilinear isomorphism of degree $g_0$. Take $\vphi_2=Q^{-1}\circ\vphi_1^{-1}$. Then one checks that $(\vphi_0,\vphi_1,\vphi_2)$ is an isomorphism from $(\cD,V^{[g_0]},W^{[g_0^{-1}]})$ to $(\cD^{\mathrm{op}},W,V)$, so Theorem \ref{isomorphism_graded_simple} tells us that $\vphi(r)\bydef \vphi_1\circ r\circ\vphi_1^{-1}$ defines an isomorphism of graded algebras $\CL{V}{W}\to\CL{W}{V}=\CL{V}{W}^{\mathrm{op}}$ that restricts to an isomorphism $\CF{V}{W}\to\CF{W}{V}=\CF{V}{W}^{\mathrm{op}}$. It remains to observe that the definition of $\vphi_1$ implies that $\vphi(r)$ is the adjoint to $r$ with respect to $B$, for any $r\in\CL{V}{W}$. 
\end{proof}

Note that it follows that any antiautomorphism $\vphi$ of the graded algebra $\cR$ extends to an antiautomorphism of  $\CL{V}{W}$ and restricts to an antiautomorphism of $\CF{V}{W}$.

\begin{remark}
It is easy to compute $\vphi$ on $\CF{V}{W}$ explicitly. Indeed, equation \eqref{eq:psi_on_CF} now takes the form $\vphi(v\ot f_u)=\vphi_2(f_u)\ot\vphi_1(v)$, so, taking into account $\vphi_1(v)=f_v$ and $\vphi_2=Q^{-1}\circ\vphi_1^{-1}$, we obtain:
\begin{equation}\label{eq:vphi_on_CF}
\vphi(v\ot f_u)=Q^{-1}u\ot f_v\quad\mbox{for all}\quad u,v\in V.
\end{equation} 
\end{remark}

\subsection{Antiautomorphisms that are involutive on the identity component}\label{ssAAIIC}

We restrict ourselves to the case where $\cR$ is a locally finite simple algebra with minimal left ideals over an {\em algebraically closed} field $\FF$. If $\cR$ is given a grading by an abelian group $G$, then, by Theorem \ref{lfd}, $\cR$ is isomorphic to some $\frF(G,\cD,\tV,\tW)$ where $\cD$ is a matrix algebra with a division grading. Suppose that the graded algebra $\cR$ admits an antiautomorphism $\vphi$. Then, by Theorem \ref{antiauto_graded_simple}, we obtain an antiautomorphism $\vphi_0$ for $\cD$. It is known \cite{BZ06} that this forces the support $T$ of $\cD$ to be an elementary $2$-group and hence $\chr{\FF}\ne 2$ or $T=\{e\}$. From now on, we assume $\chr{\FF}\ne 2$. 

Since $G$ is abelian, any division grading on a matrix algebra can be realized using generalized Pauli matrices. If the support $T$ is an elementary $2$-group, then the matrix transpose preserves this grading. Choose a nonzero element $X_t$ in each component $\cD_t$. Then the transpose of $X_t$ equals $\beta(t)X_t$ where $\beta(t)\in\{\pm 1\}$. It is easy to check (see \cite{BK}) that $\beta\colon T\to\{\pm 1\}$ is a quadratic form on $T$ if we regard it as a vector space over the field of two elements, with the nondegenerate alternating bicharacter $\beta\colon T\times T\to\FF^\times$ being the associated bilinear form: $\beta(tt')=\beta(t)\beta(t')\beta(t,t')$ for all $t,t'\in T$. It is easy to see that any automorphism of the graded algebra $\cD$ is a conjugation by some $X_t$. Hence $\vphi_0$ is given by $\vphi_0(X_{t'})=\beta(t')X_{t}^{-1}X_{t'}X_t$ for some $t\in T$. In particular, $\vphi_0$ is an {\em involution}. Hence, the isomorphism $Q$ associated to the $\vphi_0$-sesquilinear form $B$ in Theorem \ref{antiauto_graded_simple} is linear over $\cD$ and thus $Q$ is an invertible element of the identity component of $\lvw$. Adjusting $B$, we may assume without loss of generality that 
\[
\vphi_0(X_t)=\beta(t)X_t\quad\mbox{for all}\quad t\in T.
\]
This convention makes the choice of $B$ unique up to a scalar in $\FF^\times$. 

Assume that $\vphi$ restricts to an involution on $\cR_e$. Then $B$ has certain symmetry properties, which we are going to investigate now. In particular, $B$ is {\em balanced}, i.e., for any pair of homogeneous $u,v\in V$, we have
\[
B(u,v)=0\;\Leftrightarrow\;B(v,u)=0.
\]

Recall that $\frF(G,\cD,\tV,\tW)=\CF{V}{W}$ where $V$ and $W$ are constructed from $\tV$ and $\tW$ as follows. Select a transversal $S$ for $T$ and, for each $A\in G/T$, set $V_A=\tV_A\ot\cD$ and $W_A=\cD\ot\tW_A$, with the degree of the elements of $\tV_A\ot 1$ and $1\ot\tW_A$ set to be the unique element of $A\cap S$, which we denote by $\gamma(A)$. It will be convenient to identify $\tV_A$ with $\tV_A\ot 1$ and $\tW_A$ with $1\ot\tW_A$.
 
Using the definition of $\bar{B}$ and equation \eqref{eq:operator_Q}, we compute, for all $u,v\in V$ and $r\in\CL{V}{W}$:
\begin{align}\label{eq:vphi_square_}
B(u,\vphi^2(r)v)&=B(\vphi(r)u,v)=\vphi_0(\bar{B}(v,\vphi(r)u))=\vphi_0(B(Qv,\vphi(r)u))\nonumber \\
&=\vphi_0(B(rQv,u))=\bar{B}(u,rQv)=B(Qu,rQv).
\end{align}
Substituting $r=1$, we obtain $B(u,v)=B(Qu,Qv)$ for all $u,v\in V$ and hence $B(Qu,v)=B(u,Q^{-1}v)$. So we can continue \eqref{eq:vphi_square_} to obtain, for all $u,v\in V$, $B(u,\vphi^2(r)v)=B(u,Q^{-1}rQv)$. Therefore,
\begin{equation}\label{eq:vphi_square}
\vphi^2(r)=Q^{-1}rQ\quad\mbox{for all}\quad r\in\cR.
\end{equation}

Observe that the identity component $\cR_e$ is the direct sum of subalgebras $\cR^A$, $A\in G/T$, where $\cR^A$ consists of all operators in $\cR_e$ that map the isotypic component $V_A$ into itself and other isotypic components to zero. Clearly, $\cR^A$ is spanned by the operators of the form $w\ot v$ where $v\in \tV_A$ and $w\in\tW_{A^{-1}}$. Being homogeneous of degree $e$, $Q$ maps $V_A$ onto $V_A$. The restriction of $Q$ to $V_A$ is linear over $\cD$ and, by \eqref{eq:vphi_square}, commutes with all elements of $\cR^A$. It follows that  $Q$ acts on $V_A$ as a scalar $\lambda_A\in\FF^\times$. Now \eqref{eq:operator_Q} implies that $B$ is balanced, as claimed.

The fact that $B$ is balanced allows us to define the concept of orthogonality for homogeneous elements and for graded subspaces of $V$.
Since $B$ is homogeneous of degree, say, $g_0$, we have, for all $u\in V_{g_1}$ and $v\in V_{g_2}$, that $B(u,v)=0$ unless $g_0 g_1 g_2\in T$. It follows that $V_A$ is orthogonal to all isotypic components except $V_{g_0^{-1} A^{-1}}$, and hence the restriction of $B$ to $V_A\times V_{g_0^{-1} A^{-1}}$ is nondegenerate. It will be important to distinguish whether or not $A$ equals $g_0^{-1}A^{-1}$. 

If $g_0 A^2=T$, then the element $g_0\gamma(A)^2\in T$ does not depend on the choice of the transversal and will be denoted by $\tau(A)$. The restriction of $B$ to $V_A\times V_A$ is a nondegenerate $\vphi_0$-sesquilinear form over $\cD$. It is uniquely determined by its restriction to $\tV_A\times\tV_A$, which is a bilinear form over $\FF$ with values in $\cD_{\tau(A)}$. Set
\begin{equation}\label{eq:B_and_tB_1}
B(u,v)=\tB_A(u,v)X_{\tau(A)}\quad\mbox{for all}\quad u,v\in\tV_A\;\mbox{ where }\;g_0 A^2=T.
\end{equation}
Then $\tB_A$ is a nondegenerate bilinear form on $\tV_A$ with values in $\FF$. Setting $t=\tau(A)$ for brevity, we compute:
\begin{align*}
\tB_A(v,u)X_t&=B(v,u)=\vphi_0(B(Qu,v))=\vphi_0(B(\lambda_A u,v))\\
&=\vphi_0(\lambda_A\tB_A(u,v)X_t)=\lambda_A \tB_A(u,v)\vphi_0(X_t)=\lambda_A\beta(t)\tB_A(u,v)X_t,
\end{align*}
so $\tB_A(v,u)=\lambda_A\beta(t)\tB_A(u,v)$. 

If $g_0 A^2\ne T$, then we may assume without loss of generality that the transversal is chosen so that 
\begin{equation}\label{eq:choice_of_transveral}
g_0\gamma(A)\gamma(g_0^{-1} A^{-1})=e. 
\end{equation}
Then the restrictions of $B$ to $\tV_A\times \tV_{g_0^{-1}A^{-1}}$ and to $\tV_{g_0^{-1}A^{-1}}\times \tV_A$ are nondegenerate bilinear forms with values in $\cD_e=\FF$. Denote them by $\tB_A$ and $\tB_{g_0^{-1}A^{-1}}$, respectively, i.e., set 
\begin{equation}\label{eq:B_and_tB_2}
B(u,v)=\tB_A(u,v)1\quad\mbox{for all}\quad u\in\tV_A\;\mbox{ and }\;v\in\tV_{g_0^{-1}A^{-1}}\;\mbox{ where }\;g_0 A^2\ne T.
\end{equation}
It is easy to see how $\tB_A$ and $\tB_{g_0^{-1}A^{-1}}$ are related: $\tB_{g_0^{-1}A^{-1}}(v,u)=\lambda_A\tB_A(u,v)$.

Putting all pieces together, we set $\tV_{\{A,g_0^{-1}A^{-1}\}}\bydef\tV_A\oplus\tV_{g_0^{-1}A^{-1}}$ and  
\begin{equation}\label{eq:tV_for_vphi}
\tV\bydef\bigoplus_{A\in G/T,\, g_0 A^2=T}\tV_A\quad\oplus\quad
\bigoplus_{\{A,g_0^{-1}A^{-1}\}\subset G/T,\, g_0 A^2\ne T}\tV_{\{A,g_0^{-1}A^{-1}\}},
\end{equation}
and define a nondegenerate bilinear form $\tB\colon\tV\times\tV\to\FF$ so that all summands in \eqref{eq:tV_for_vphi} are orthogonal to each other, the restriction of $\tB$ to $\tV_A\times\tV_A$ is $\tB_A$ if $g_0 A^2=T$ while the restriction of $\tB$ to $\tV_A$ is zero and the restriction to $\tV_A\times \tV_{g_0^{-1}A^{-1}}$ is $\tB_A$ if $g_0 A^2\ne T$. 

Conversely, let $\tV$ be a vector space over $\FF$ that is given a grading by $G/T$ and let $\tB$ be a nondegenerate bilinear form on $\tV$ that is compatible with the grading in the sense that
\[
\tB(\tV_{g_1 T},\tV_{g_2 T})=0\quad\mbox{for all}\quad g_1,g_2\in G\;\mbox{ with }\;g_0 g_1 g_2 T\ne T
\]
and, for all $A\in G/T$, satisfies the following symmetry condition:
\begin{equation}\label{eq:def_mu}
\tB(v,u)=\mu_A\tB(u,v)\quad\mbox{for all}\quad u\in\tV_A\;\mbox{ and }\;v\in\tV_{g_0^{-1}A^{-1}}\quad\mbox{where}\quad\mu_A\in\FF^\times.
\end{equation}
It follows that $\mu_A \mu_{g_0^{-1}A^{-1}}=1$. Hence, if $g_0 A^2=T$, then $\tV$ restricts to a symmetric or a skew-symmetric form on $\tV_A$. 
For any $A\in G/T$, let $\tB_A$ be the restriction of $\tB$ to $\tV_A\times\tV_{g_0^{-1}A^{-1}}$. It follows that $\tB_A$ is nondegenerate. Choose a transversal $S$ for $T$ so that \eqref{eq:choice_of_transveral} holds for all $A$ with $g_0 A^2\ne T$. Set $V_A=\tV_A\ot{}^{[\gamma(A)]}\cD$. Then $V_A$ is a graded right $\cD$-module whose isomorphism class does not depend on the choice of $S$. Set $V=\bigoplus_{A\in G/T}V_A$. Define $B\colon V\times V\to\cD$ using \eqref{eq:B_and_tB_1} and \eqref{eq:B_and_tB_2}, setting $B$ equal to zero in all other cases and then extending by $\vphi_0$-sesquilinearity. Clearly, $B$ is nondegenerate. Set $W=\{f_u\;|\;u\in V\}$ where $f_u(v)=B(u,v)$. We will denote the corresponding $G$-graded algebra $\fvw$ by $\frF(G,\cD,\tV,\tB,g_0)$ or $\frF(G,T,\beta,\tV,\tB,g_0)$, since $\cD$ is determined by the support $T$ and bicharacter $\beta$. The graded algebra $\fvw$ has an antiautomorphism $\vphi$ defined by the adjoint with respect to $B$. Indeed, let $Q\colon V\to V$ act on $V_A$ as the scalar $\lambda_A$ where
\begin{equation}\label{eq:lambda_and_mu}
\lambda_A=\left\{
\begin{array}{ll}
\mu_A\beta(\tau(A)) & \mbox{if}\quad g_0 A^2=T;\\
\mu_A & \mbox{if}\quad g_0 A^2\ne T.
\end{array}\right.
\end{equation}
Then $Q$ satisfies $\bar{B}(u,v)=B(Qu,v)$ for all $u,v\in V$ and hence $B$ is weakly Hermitian. Since $Q$ commutes with the elements of $\cR_e$, equation \eqref{eq:vphi_square} tells us that $\vphi^2$ is the identity on $\cR_e$.

\begin{df}\label{df:datum_finitary_2}
With fixed $\cD$ and $\vphi_0$, we will write $(\tV,\tB,g_0)\sim (\tV',\tB',g'_0)$ if there is an element $g\in G$ such that $g'_0=g_0 g^{-2}$ and, for any $A\in G/T$ with $g_0 A^2=T$, we have $\tV_{A}\cong\tV'_{gA}$ as inner product spaces while, for any $A\in G/T$ with $g_0 A^2\ne T$, we have $(\tV_{A},\tV_{g_0^{-1}A^{-1}})\cong(\tV'_{gA},\tV'_{(g'_0)^{-1}g^{-1}A^{-1}})$ and $\mu_A=\mu'_{gA}$ (where $\mu_A$ is defined by \eqref{eq:def_mu} and $\mu'_{A}$ by the same equation with $\tB$ replaced by $\tB'$). 
\end{df}

Recall that, disregarding the grading, $\cR$ can be represented as $\frF_\Pi(U)$ with $U=\tV\ot N$ and $\Pi=M\ot\tW$, where $M$ and $N$ are the natural right and left modules for $\cD=M_\ell(\FF)$, respectively (see the analysis preceding Theorem \ref{lfd}). In our case, $W=\{f_u\;|\;u\in V\}$ and $\tW=\{f_{\tilde u}\;|\;\tilde u\in \tV\}$. Note that $f_{\tilde u}(\tilde v)=B(\tilde u,\tilde v)$ does not necessarily belong to $\cD_e=\FF$ for all $\tilde u,\tilde v\in\tV$, so equation \eqref{eq:pairing_U_Pi} for the $\FF$-bilinear form $\Pi\times U\to\FF$ should be modified as follows:
\begin{equation}\label{eq:pairing_U_Pi_2}
(m\ot\tilde u,\tilde v\ot n)=mB(\tilde u,\tilde v)n\quad\mbox{for all}\quad m\in M, n\in N,\, \tilde u,\tilde v\in \tV.
\end{equation}
If we identify $U$ and $\Pi$ with $\tV^\ell$, then the above $\FF$-bilinear form is given by
\begin{equation}\label{eq:two_bilinear_forms_2}
((\tilde u_1,\ldots,\tilde u_\ell),(\tilde v_1,\ldots,\tilde v_\ell))=\sum_{i,j=1}^\ell x_{ij}\tB(\tilde u_i,\tilde v_j)\quad\mbox{for all}\quad \tilde u_i\in\tV_{A_i}\;\mbox{ and }\;\tilde v_j\in\tV_{A'_j},
\end{equation}
where $x_{ij}$ is the $(i,j)$-entry of the matrix $X_{\tau(A)}$ if $A_i=A'_j=A$ with $g_0 A^2=T$ and $x_{ij}=\delta_{i,j}$ otherwise.

Now we are ready to state the result:

\begin{theorem}\label{lfd_2}
Let $\cR$ be a locally finite simple algebra with minimal left ideals over an algebraically closed field $\FF$, $\chr{\FF}\ne 2$. If $\cR$ is given a grading by an abelian group $G$ and an antiautomorphism $\vphi$ that preserves the grading and restricts to an involution on $\cR_e$, then $(\cR,\vphi)$ is isomorphic to some $\frF(G,\cD,\tV,\tB,g_0)$ where $\cD$ is a matrix algebra over $\FF$ equipped with a division grading and an involution $\vphi_0$ given by matrix transpose. Conversely, if $\cD=M_\ell(\FF)$ with a division grading and  $\vphi_0$ is the matrix transpose, then $\frF(G,\cD,\tV,\tW)$ is a locally finite simple algebra with minimal left ideals, which can be represented as $\frF_U(U)$ where $U=\tV^\ell$ and the nondegenerate bilinear form $U\times U\to\FF$ is given by \eqref{eq:two_bilinear_forms_2}. Two such graded algebras $\frF(G,\cD,\tV,\tB,g_0)$ and $\frF(G,\cD',\tV',\tB',g'_0)$ are not isomorphic unless $\cD\cong\cD'$ as graded algebras, whereas, for fixed $\cD$ and $\vphi_0$, $\frF(G,\cD,\tV,\tB,g_0)$ and $\frF(G,\cD,\tV',\tB',g'_0)$ are isomorphic as graded algebras with antiautomorphism if and only if $(\tV,\tB,g_0)\sim (\tV',\tB',g'_0)$ in the sense of Definition \ref{df:datum_finitary_2}.
\end{theorem}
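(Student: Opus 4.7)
The plan is to assemble Theorem~\ref{lfd_2} by combining Theorem~\ref{lfd} (classification of gradings on locally finite simple algebras) with Theorem~\ref{antiauto_graded_simple} (classification of antiautomorphisms via sesquilinear forms) and the specializations carried out in Subsection~\ref{ssAAIIC}. For the forward direction, starting from $(\cR,\vphi)$, Theorem~\ref{antiauto_graded_simple} produces an antiautomorphism $\vphi_0$ of the graded division algebra $\cD$ and a weakly Hermitian homogeneous $\vphi_0$-sesquilinear form $B$ on $V$, of some degree $g_0$. The hypothesis that $\vphi$ is involutive on $\cR_e$, combined with $\charac\FF\ne 2$, forces (as explained in the subsection) the support $T$ of $\cD$ to be an elementary $2$-group and permits the normalization $\vphi_0(X_t)=\beta(t)X_t$, so that $\vphi_0$ becomes the matrix transpose relative to a generalized Pauli basis. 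The associated operator $Q$ from \eqref{eq:operator_Q} is then $\cD$-linear, homogeneous of degree $e$, and by \eqref{eq:vphi_square} commutes with $\cR_e$; hence $Q$ acts as a scalar $\lambda_A\in\FF^\times$ on each isotypic component $V_A$, and $B$ is fully determined by the bilinear forms $\tB_A$ on the stubs $\tV_A$ defined by \eqref{eq:B_and_tB_1} and \eqref{eq:B_and_tB_2}, assembling into the datum $(\tV,\tB,g_0)$.

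For the converse, given such a datum, I would define $Q$ by \eqref{eq:lambda_and_mu}, verify that $B$ is weakly Hermitian, and invoke the partial converse in Theorem~\ref{antiauto_graded_simple} to obtain an antiautomorphism $\vphi$ on $\frF(G,\cD,\tV,\tB,g_0)$. Since $Q$ is scalar on each $V_A$ and every element of $\cR_e$ preserves the isotypic decomposition, $Q$ commutes with $\cR_e$, so \eqref{eq:vphi_square} yields $\vphi^2|_{\cR_e}=\id$. The ungraded description $\cR\cong\frF_U(U)$ with $U=\tV^\ell$ and pairing given by \eqref{eq:two_bilinear_forms_2} follows from the identification scheme preceding Theorem~\ref{lfd}, modified to use the $\cD$-valued form $B$ in place of a $\cD_e=\FF$-valued form, leading to \eqref{eq:pairing_U_Pi_2}.

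The main work lies in the isomorphism criterion. Let $\psi\colon(\cR,\vphi)\to(\cR',\vphi')$ be an isomorphism of graded algebras that also intertwines $\vphi$ and $\vphi'$. Theorem~\ref{isomorphism_graded_simple} produces $g\in G$ and an isomorphism of triples $(\psi_0,\psi_1,\psi_2)$ from $({}^{[g^{-1}]}\cD^{[g]},V^{[g]},{}^{[g^{-1}]}W)$ to $(\cD',V',W')$. The intertwining relation $\psi\circ\vphi=\vphi'\circ\psi$, together with the uniqueness clause of Theorem~\ref{antiauto_graded_simple}, pins down $\psi_0$ up to conjugation by a nonzero homogeneous element of $\cD$; under the normalization that $\vphi_0$ and $\vphi'_0$ are both the matrix transpose, this yields an isomorphism $\cD\cong\cD'$ of graded algebras. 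On the level of forms, $\psi_1$ must carry $B$ to a scalar multiple of $B'$ (after the shift by $g$), and the degree of $B$ viewed on $V^{[g]}\ot V^{[g]}$ is $g_0 g^{-2}$, so $g'_0=g_0 g^{-2}$. The restriction of $\psi_1$ to each stub $\tV_A\to\tV'_{Ag}$ then provides the required isometry when $g_0 A^2=T$, and the required isomorphism of pairs preserving the symmetry scalar $\mu_A$ when $g_0 A^2\ne T$. The converse direction is obtained by reversing this construction using the partial converse of Theorem~\ref{antiauto_graded_simple} applied to the form built from the given isometries on stubs.

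The main obstacle, as I see it, is the bookkeeping around the shift $g$: tracking how it acts simultaneously on the degree of the sesquilinear form (shifting $g_0$ by $g^{-2}$) and on the orbits $\{A,g_0^{-1}A^{-1}\}$ of $G/T$ under the involution $A\mapsto g_0^{-1}A^{-1}$, while absorbing the scalar indeterminacy of $B$ (and the conjugation indeterminacy of $\vphi_0$) from the uniqueness clause of Theorem~\ref{antiauto_graded_simple} into the choice of dual bases underlying $\tB$, so that Definition~\ref{df:datum_finitary_2} captures the equivalence neither too coarsely nor too finely.
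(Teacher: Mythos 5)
Your proposal follows the paper's proof essentially step for step: the forward and converse existence statements are exactly the development of Subsection~\ref{ssAAIIC} (which the paper cites as ``already proved''), and the isomorphism criterion is handled via Theorem~\ref{isomorphism_graded_simple} followed by comparison of the transported form $B''=\psi_0\circ B\circ(\psi_1^{-1}\times\psi_1^{-1})$ with $B'$ using the uniqueness clause of Theorem~\ref{antiauto_graded_simple}. One detail you gloss over, and which the paper makes explicit: in the $g_0A^2=T$ case the restriction of $\psi_1$ to $\tV_A$ only sends $\tB_A$ to a nonzero scalar multiple of $\tB'_{gA}$ (the scalar coming from $\lambda$ and from $\psi_0(X_{\tau(A)})$ being a scalar multiple of $X_{\tau(A)}$), so one needs algebraic closure of $\FF$ to rescale and obtain a genuine isometry $\tV_A\cong\tV'_{gA}$; your remark about ``absorbing the scalar indeterminacy'' gestures at this but does not pin it down. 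Also, the isomorphism $\cD\cong\cD'$ comes directly from the triple-isomorphism of Theorem~\ref{isomorphism_graded_simple}, not from the uniqueness clause of Theorem~\ref{antiauto_graded_simple} as your phrasing suggests; the latter is used to show $B''=\lambda B'$ with $\lambda\in\FF^\times$ (since $d$ must then be central in $\cD$, hence in $\cD_e=\FF$).
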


\begin{proof}
The first two assertions are already proved. Let $\cR=\frF(G,\cD,\tV,\tB,g_0)$ and $\cR'=\frF(G,\cD',\tV',\tB',g'_0)$. Denote their antiautomorphisms by $\vphi$ and $\vphi'$, respectively. By Theorem \ref{isomorphism_graded_simple}, if $\psi\colon\cR\to\cR'$ is an isomorphism of graded algebras, then, for some $g\in G$, we have an isomorphism of triples $(\psi_0,\psi_1,\psi_2)$ from $(\cD,V^{[g]},W^{[g^{-1}]})$ to $(\cD',V',W')$, so $\cD\cong\cD'$ as graded algebras. Now suppose $\cD=\cD'$ and  $\vphi'=\psi\circ\vphi\circ\psi^{-1}$. If $\vphi$ and $\vphi'$ are given by the adjoint with respect to $\vphi_0$-sesquilinear forms $B$ and $B'$, respectively, then the form $B''(u',v')\bydef\psi_0(B(\psi_1^{-1}(u'),\psi_1^{-1}(v')))$, $u',v'\in V'$, is also  $\vphi_0$-sesquilinear (because $\psi_0$ commutes with $\vphi_0$) and gives $\vphi'$ as adjoint (because $\psi(r)=\psi_1\circ r\circ\psi_1^{-1}$ for all $r\in\cR$). It follows that $B''=\lambda B$ for some $\lambda\in\FF^\times$. The degree of $B''$ is $g_0 g^{-2}$, so we obtain $g'_0=g_0 g^{-2}$. For each $A\in G/T$, $\psi_1$ restricts to an isomorphism $\psi_A\colon\tV_{A}\to\tV'_{gA}$. Consider the bilinear form $\tB''_{gA}\bydef\tB_A\circ(\psi_A^{-1}\times\psi_{g_0^{-1}A^{-1}}^{-1})$. For any $A$ with $g_0 A^2\ne T$, we have  $\tB''_{gA}=\lambda\tB'_{gA}$, so $(\psi_A,\psi_{g_0^{-1}A^{-1}})$ is an isomorphism of pairs and $\mu_A=\mu'_{gA}$. For any $A$ with $g_0 A^2=T$, $\tB''_{gA}$ is still a scalar multiple of $\tB'_{gA}$ (because $\psi_0(X_t)$ is a scalar multiple of $X_t$ for all $t\in T$), so $\psi_A$ preserves inner product up to a scalar multiple. Since $\FF$ is algebraically closed, we obtain $\tV_{A}\cong\tV'_{gA}$ as inner product spaces.

Conversely, if $(\tV,\tB,g_0)\sim (\tV',\tB',g'_0)$, then we can construct a $\cD$-linear isomorphism $\psi_1\colon V^{[g]}\to V'$ such that $B'=B\circ(\psi_1^{-1}\times\psi_1^{-1})$. Define $\psi_2(f_u)=f_{\psi_1(u)}$ for all $u\in V$. Then $(\id,\psi_1,\psi_2)$ is an isomorphism from $(\cD,V^{[g]},W^{[g^{-1}]})$ to $(\cD,V',W')$. By Theorem \ref{isomorphism_graded_simple}, we obtain an isomorphism of graded algebras  $\psi\colon\cR\to\cR'$. By construction, $\vphi'=\psi\circ\vphi\circ\psi^{-1}$.
\end{proof}

In the case where $\cR$ has finite or countable dimension, i.e., $\cR=M_n(\FF)$ with $n\in\NN\cup\{\infty\}$, we can express the classification in combinatorial terms. Since $\FF$ is algebraically closed, two vector spaces with symmetric inner products are isomorphic if they have the same finite or countable dimension. The same is true for vector spaces with skew-symmetric inner product. Therefore, for $A\in G/T$ with $g_0A^2=T$, the isomorphism class of $\tV_A$ is encoded by $\mu_A$ and $\dim\tV_A$. For $A\in G/T$ with $g_0A^2\ne T$, the isomorphism class of $(\tV_{A},\tV_{g_0^{-1}A^{-1}})$ is encoded by $\dim\tV_A=\dim\tV_{g_0^{-1}A^{-1}}$. We introduce functions $\mu\colon G/T\to\FF^\times$ sending $A$ to $\mu_A$ (where we set $\mu_A=1$ if $\tV_A=0$) and, as before, $\kappa\colon G/T\to\{0,1,2,\ldots,\infty\}$ sending $A$ to $\dim\tV_A$. Recall that $\mu$ satisfies $\mu_A\mu_{g_0^{-1}A^{-1}}=1$ for all $A\in G/T$ and $\kappa$ has a finite or countable support. We will denote the associated graded algebra with antiautomorphism $\frF(G,\cD,\tV,\tB,g_0)$ by $\frF(G,\cD,\kappa,\mu,g_0)$ or by $\frF(G,T,\beta,\kappa,\mu,g_0)$. For a given elementary $2$-subgroup $T\subset G$ and a bicharacter $\beta$, we fix a realization of $\cD$ using Pauli matrices and thus fix an involution $\vphi_0$ on $\cD$.  Finally, for any $g\in G$, define $\mu^g$ and $\kappa^g$ by setting $\mu^g(gA)\bydef\mu(A)$ and $\kappa^g(gA)\bydef\kappa(A)$ for all $A\in G/T$.

\begin{corollary}\label{gradings_on_M_infinity_2}
Let $\FF$ be an algebraically closed field, $\chr{\FF}\ne 2$, and let $\cR=M_n(\FF)$ where $n\in\NN\cup\{\infty\}$. If $\cR$ is given a grading by an abelian group $G$ and an antiautomorphism $\vphi$ that preserves the grading and restricts to an involution on $\cR_e$, then $(\cR,\vphi)$ is isomorphic to some $\frF(G,\cD,\kappa,\mu,g_0)$ where $\cD=M_\ell(\FF)$, with $\ell\in\NN$ and $n=|\kappa|\ell$, is equipped with a division grading and an involution $\vphi_0$ given by matrix transpose. Moreover, $\frF(G,T,\beta,\kappa,\mu,g_0)$ and $\frF(G,T',\beta',\kappa',\mu',g'_0)$ are isomorphic as graded algebras with antiautomorphism if and only if $T'=T$, $\beta'=\beta$, and  there exists $g\in G$ such that $g'_0=g_0 g^{-2}$, $\kappa'=\kappa^g$ and $\mu'=\mu^g$.\qed
\end{corollary}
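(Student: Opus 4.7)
The plan is to deduce this corollary from Theorem \ref{lfd_2} by translating the abstract equivalence on triples $(\tV,\tB,g_0)$ into the combinatorial equivalence on pairs $(\kappa,\mu)$, using the classification of non-degenerate bilinear forms over an algebraically closed field of characteristic $\ne 2$. The structural statement (existence of an isomorphism $(\cR,\vphi)\cong\frF(G,\cD,\kappa,\mu,g_0)$) is immediate from Theorem \ref{lfd_2}: the latter gives $(\cR,\vphi)\cong\frF(G,\cD,\tV,\tB,g_0)$, and since $\cR=M_n(\FF)$ has finite or countable dimension, each graded stub $\tV_A$ does too, so the datum $(\tV,\tB,g_0)$ is captured by the pair of functions $\kappa(A)=\dim\tV_A$ and $\mu(A)=\mu_A$ (set to $1$ when $\tV_A=0$). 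The relation $n=|\kappa|\ell$ is just $\dim_\FF\cR=(\dim_\FF\tV)(\dim_\FF\cD)$.

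For the isomorphism criterion, Theorem \ref{lfd_2} tells us that $\frF(G,T,\beta,\kappa,\mu,g_0)\cong\frF(G,T',\beta',\kappa',\mu',g'_0)$ forces $\cD\cong\cD'$ as graded algebras, hence $T=T'$ and $\beta=\beta'$, and also forces the existence of $g\in G$ with $g'_0=g_0 g^{-2}$ such that $(\tV,\tB,g_0)\sim(\tV',\tB',g'_0)$ in the sense of Definition \ref{df:datum_finitary_2}. The remaining step is to show that, for such $g$, the condition of Definition \ref{df:datum_finitary_2} is equivalent to $\kappa'=\kappa^g$ and $\mu'=\mu^g$.

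The nontrivial direction is the sufficiency: given $(T,\beta)=(T',\beta')$, $g'_0=g_0 g^{-2}$, $\kappa'=\kappa^g$ and $\mu'=\mu^g$, one must build isomorphisms of triples (or pairs) at each orbit of the involution $A\mapsto g_0^{-1}A^{-1}$ on $G/T$. For an orbit $\{A,g_0^{-1}A^{-1}\}$ with $g_0 A^2\ne T$, the content of Definition \ref{df:datum_finitary_2} is that $(\tV_A,\tV_{g_0^{-1}A^{-1}})\cong(\tV'_{gA},\tV'_{(g'_0)^{-1}g^{-1}A^{-1}})$ as pairs and that $\mu_A=\mu'_{gA}$; since any non-degenerate pairing between two spaces of equal (finite or countable) dimension is classified by that dimension alone, this reduces to $\kappa(A)=\kappa'(gA)$ and $\kappa(g_0^{-1}A^{-1})=\kappa'((g'_0)^{-1}g^{-1}A^{-1})$, both encoded in $\kappa'=\kappa^g$. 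For a fixed point, i.e.\ $A$ with $g_0 A^2=T$, the relation $\mu_A\mu_{g_0^{-1}A^{-1}}=1$ forces $\mu_A=\pm 1$, so $\tB$ restricts to either a symmetric or a skew-symmetric non-degenerate form on $\tV_A$; over an algebraically closed field of characteristic $\ne 2$ the isometry class of such a form is determined by $\dim\tV_A=\kappa(A)$ alone, so the conditions $\kappa(A)=\kappa'(gA)$ and $\mu_A=\mu'_{gA}$ suffice to produce the required isometry $\tV_A\cong\tV'_{gA}$.

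The step I expect to require the most care is this last one: verifying that the implicit parity constraint (a finite-dimensional skew form must have even dimension) is compatible on both sides, and checking that the realization of $\cD$ via Pauli matrices together with the fixed involution $\vphi_0$ given by transpose makes the scalar ambiguity in Theorem \ref{antiauto_graded_simple} genuinely disappear, so that $\mu$ is a well-defined invariant of the pair $(\cR,\vphi)$ rather than of the choice of $B$. Once this bookkeeping is settled, the corollary follows by direct translation from Theorem \ref{lfd_2}.
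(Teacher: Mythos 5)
Your argument is correct and takes essentially the same route as the paper, which treats the corollary as an immediate consequence of Theorem~\ref{lfd_2} together with the observation (made just before the corollary) that over an algebraically closed field nondegenerate pairings and symmetric or skew forms of a given finite or countable dimension are unique up to isometry, so $(\tV,\tB,g_0)$ up to the equivalence of Definition~\ref{df:datum_finitary_2} is faithfully encoded by $(\kappa,\mu,g_0)$. The two concerns you flag at the end are in fact harmless: $\mu_A$ is insensitive to rescaling $B$ (both sides of \eqref{eq:def_mu} scale by the same factor), and the parity constraint is automatic once $\kappa'=\kappa^g$ and $\mu'=\mu^g$ are assumed.
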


\subsection{Involutions}\label{ssI}

We can specialize Theorem \ref{lfd_2} to obtain a classification of involutions on the graded algebra $\cR$. By \eqref{eq:vphi_square}, $\vphi$ is an involution if and only if $Q\colon V\to V$ is a scalar operator, i.e., $\bar{B}=\lambda B$ for some $\lambda\in\FF^\times$, which implies $\lambda\in\{\pm 1\}$. Disregarding the grading, $\cR=\frF_U(U)$ where $U$ is an inner product space. Since $\vphi_0$ is matrix transpose, equation \eqref{eq:pairing_U_Pi_2} implies that $(v,u)=\lambda(u,v)$ for all $u,v\in U$. Hence, in the case $\bar{B}=B$, we obtain an {\em orthogonal involution} on $\cR$ and write $\sgn(\vphi)=1$, whereas in the case $\bar{B}=-B$, we obtain a {\em symplectic involution} and write $\sgn(\vphi)=-1$. Since all $\lambda_A$ must be equal to $\lambda=\sgn(\vphi)$, equation \eqref{eq:lambda_and_mu} yields
\begin{equation}\label{eq:mu_for_inv}
\mu_A=\left\{
\begin{array}{ll}
\sgn(\vphi)\beta(\tau(A)) & \mbox{if}\quad g_0 A^2=T;\\
\sgn(\vphi) & \mbox{if}\quad g_0 A^2\ne T.
\end{array}\right.
\end{equation}
We note that, for $g_0 A^2\ne T$, although the space $\tV_{\{A,g_0^{-1}A^{-1}\}}$ now has a symmetric or skew-symmetric inner product, the equivalence relation in Definition \ref{df:datum_finitary_2} requires more than just an isomorphism of inner product spaces: the isomorphism must respect the direct sum decomposition $\tV_{\{A,g_0^{-1}A^{-1}\}}=\tV_A\oplus\tV_{g_0^{-1}A^{-1}}$. To summarize:

\begin{proposition}\label{lfd_inv}
Under the conditions of Theorem \ref{lfd_2}, $\vphi$ is an involution if and only if $\tB$ satisfies the symmetry condition \eqref{eq:def_mu} where $\mu_A$ is given by \eqref{eq:mu_for_inv}.\qed
\end{proposition}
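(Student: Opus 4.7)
The plan is to reduce this to the characterization already extracted in the paragraph preceding \eqref{eq:mu_for_inv}, namely that $\vphi$ is an involution if and only if $Q$ is a scalar operator $\lambda\cdot\id_V$ with $\lambda=\sgn(\vphi)\in\{\pm 1\}$. Once this is in hand, the proposition is pure bookkeeping: one solves \eqref{eq:lambda_and_mu} for $\mu_A$, case by case, under the constraint $\lambda_A=\sgn(\vphi)$ for every $A\in G/T$ with $\tV_A\ne 0$.

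First I would recall why $\vphi^2=\id$ forces $Q$ to be scalar on the whole module $V$. From \eqref{eq:vphi_square} we have $Qr=rQ$ for every $r\in\cR$. Since $\vphi_0^2=\id$, the operator $Q$ is $\cD$-linear, and we already know from the discussion following \eqref{eq:vphi_square} that it acts on each isotypic component $V_A$ as a scalar $\lambda_A\in\FF^\times$. For any two indices $A,B\in G/T$ with $\tV_A,\tV_B\neq 0$, pick homogeneous $u\in V_A$, $v\in V_B$ and $f\in W$ with $f(u)=1$; since $\fvw\subset\cR$, the operator $v\ot f$ belongs to $\cR$, and applying the identity $Q(v\ot f)=(v\ot f)Q$ to $u$ yields $\lambda_B v=\lambda_A v$, so $\lambda_A=\lambda_B$. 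Hence all $\lambda_A$ agree with a single $\lambda$, and $\bar{\bar{B}}=B$ (which follows from $\vphi_0^2=\id$) combined with $\bar{B}=\lambda B$ gives $\lambda^2=1$, so $\lambda=\sgn(\vphi)\in\{\pm 1\}$.

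Now I would plug $\lambda_A=\sgn(\vphi)$ into the defining formula \eqref{eq:lambda_and_mu}. If $g_0A^2=T$, then $\sgn(\vphi)=\mu_A\beta(\tau(A))$, and since $\beta(\tau(A))\in\{\pm 1\}$, this is equivalent to $\mu_A=\sgn(\vphi)\beta(\tau(A))$. If $g_0A^2\ne T$, then $\sgn(\vphi)=\mu_A$ directly. These are precisely the two cases in \eqref{eq:mu_for_inv}. Conversely, if $\tB$ is chosen so that $\mu_A$ satisfies \eqref{eq:mu_for_inv}, then \eqref{eq:lambda_and_mu} gives $\lambda_A=\sgn(\vphi)$ for every $A$, hence $Q=\sgn(\vphi)\cdot\id_V$, and \eqref{eq:vphi_square} yields $\vphi^2=\id$ on all of $\cR$. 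The bulk of the work has already been done in the preceding discussion, so no real obstacle remains; one need only exercise care to handle the two cases of \eqref{eq:lambda_and_mu} separately and to invoke $\beta(\tau(A))^2=1$ when passing between $\mu_A\beta(\tau(A))=\sgn(\vphi)$ and $\mu_A=\sgn(\vphi)\beta(\tau(A))$.
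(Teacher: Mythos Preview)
Your argument is correct and follows the same route as the paper: the proposition carries a \qed\ because its proof is precisely the discussion preceding \eqref{eq:mu_for_inv}, which invokes \eqref{eq:vphi_square} to identify $\vphi^2=\id$ with $Q$ being scalar and then reads off $\mu_A$ from \eqref{eq:lambda_and_mu}. You have simply made explicit the step the paper compresses into one sentence (``$\vphi$ is an involution if and only if $Q$ is a scalar operator''), by showing via an element $v\ot f\in\fvw$ that all $\lambda_A$ must coincide.
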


In the case of finite or countable dimension, we again can reduce everything to combinatorial terms (for the finite case, this result appeared in \cite{BK}). Once $\delta=\sgn(\vphi)$ is specified, the function $\mu\colon G/T\to\FF^\times$ is determined by \eqref{eq:mu_for_inv}, so we will denote the corresponding graded algebra with involution by $\frF(G,\cD,\kappa,\delta,g_0)$ or $\frF(G,T,\beta,\kappa,\delta,g_0)$.

\begin{corollary}\label{gradings_on_M_infinity_inv}
Let $\cR=\frF(G,T,\beta,\kappa)$ where the ground field $\FF$ is algebraically closed, $\chr{\FF}\ne 2$, and $G$ is an abelian group. The graded algebra $\cR$ admits an involution $\vphi$ with $\sgn(\vphi)=\delta$ if and only if $T$ is an elementary $2$-group and, for some $g_0\in G$, we have $\kappa(A)=\kappa(g_0^{-1}A^{-1})$ for all $A\in G/T$ and we also have $\beta(g_0 a^2)=\delta$ for all $A=aT\in G/T$ such that $g_0A^2=T$ and $\kappa(A)$ is finite and odd. If $\vphi$ is an involution on $\cR$ with $\sgn(\vphi)=\delta$, then the pair $(\cR,\vphi)$ is isomorphic to some $\frF(G,T,\beta,\kappa,\delta,g_0)$. Moreover, $\frF(G,T,\beta,\kappa,\delta,g_0)$ and $\frF(G,T',\beta',\kappa',\delta',g'_0)$ are isomorphic as graded algebras with involution if and only if $T'=T$, $\beta'=\beta$, $\delta'=\delta$, and there exists $g\in G$ such that $g'_0=g_0 g^{-2}$ and $\kappa'=\kappa^g$.\qed
\end{corollary}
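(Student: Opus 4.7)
The plan is to derive this corollary by specializing Theorem \ref{lfd_2} (equivalently, its combinatorial form Corollary \ref{gradings_on_M_infinity_2}) to the case of involutions, using Proposition \ref{lfd_inv} to identify which sesquilinear forms correspond to involutions.

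\textbf{Existence.} By Proposition \ref{lfd_inv}, an involution $\vphi$ on $\cR$ with $\sgn(\vphi)=\delta$ corresponds to a nondegenerate bilinear form $\tB\colon\tV\times\tV\to\FF$ satisfying the symmetry condition \eqref{eq:def_mu} with $\mu_A$ given by \eqref{eq:mu_for_inv}. The existence of such a $\tB$ reduces to independent requirements indexed by the orbits of the involution $A\mapsto g_0^{-1}A^{-1}$ on $G/T$. For a two-point orbit $\{A,g_0^{-1}A^{-1}\}$ (i.e., $g_0A^2\ne T$), we need a nondegenerate pairing $\tV_A\times\tV_{g_0^{-1}A^{-1}}\to\FF$, which exists iff $\kappa(A)=\kappa(g_0^{-1}A^{-1})$. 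For a fixed point (i.e., $g_0A^2=T$), we need a nondegenerate bilinear form on $\tV_A$ with symmetry factor $\mu_A=\delta\beta(\tau(A))$. Because $T$ is an elementary $2$-group, $(at)^2=a^2$ for any $t\in T$, so $\tau(A)=g_0\gamma(A)^2$ coincides with $g_0a^2$ for every $a\in A$. Over the algebraically closed field $\FF$ (with $\chr{\FF}\ne 2$), a nondegenerate symmetric form exists in any finite or countable dimension, while a nondegenerate skew-symmetric form requires even or infinite dimension; the unique obstruction is therefore $\mu_A=-1$ together with $\kappa(A)$ finite and odd. Rewriting $\mu_A=1$ as $\beta(g_0a^2)=\delta$ yields the existence criterion. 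That $T$ itself must be an elementary $2$-group is imposed by the existence of the antiautomorphism $\vphi_0$ on $\cD$, as recalled at the start of Section \ref{ssAAIIC}.

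\textbf{Classification.} Since the combinatorial datum $(T,\beta,\kappa,\mu,g_0)$ of Corollary \ref{gradings_on_M_infinity_2} now has $\mu$ determined by $(T,\beta,\delta,g_0)$ via \eqref{eq:mu_for_inv}, it suffices to show that the isomorphism criterion of that corollary, restricted to involutions, reduces to the one stated. The sign $\delta$ is an isomorphism invariant, since the induced involution on $\cR_e$ is orthogonal iff $\delta=1$ and symplectic iff $\delta=-1$, a property preserved under isomorphisms of algebras with involution. Assuming $T'=T$, $\beta'=\beta$, $\delta'=\delta$ and $g'_0=g_0g^{-2}$ for some $g\in G$, the equality $\mu'=\mu^g$ is automatic: for any $A'=a'T$ with $g'_0(A')^2=T$, the identity $g_0(g^{-1}a')^2=g'_0(a')^2$ gives
\[
\mu^g_{A'}=\mu_{g^{-1}A'}=\delta\,\beta\bigl(g_0(g^{-1}a')^2\bigr)=\delta\,\beta\bigl(g'_0(a')^2\bigr)=\mu'_{A'},
\]
while in the opposite case $g'_0(A')^2\ne T$ both sides equal $\delta$. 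The classification claim then follows directly from Corollary \ref{gradings_on_M_infinity_2}.

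\textbf{Main obstacle.} The one delicate point is checking, in the fixed-orbit case, that $g_0a^2$ is well defined independently of the choice of representative $a\in A$; this hinges on $T$ being elementary $2$-torsion, and without it the formulation of the existence criterion would not even make sense. Once this is noted, the remainder is a routine unpacking of the structural results in Theorem \ref{lfd_2} and Proposition \ref{lfd_inv} into the combinatorial language of $\kappa$, together with the observation that the $\mu$-component of the classifying datum is forced by the other parameters.
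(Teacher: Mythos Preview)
Your proposal is correct and follows precisely the route the paper intends: the corollary is stated with a \qed and no explicit proof because it is an immediate specialization of Theorem~\ref{lfd_2} and Corollary~\ref{gradings_on_M_infinity_2} via Proposition~\ref{lfd_inv}, which is exactly what you carry out. Your verification that $\mu'=\mu^g$ is automatic once the other invariants match, and your observation that $g_0a^2$ is well defined because $T$ is an elementary $2$-group, are the only details worth recording, and you have handled them correctly.
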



\section{Functional identities and Lie homomorphisms}\label{sFILH}

The so-called {\em Herstein's Lie map conjectures} describe the relationship between (anti)isomorphisms (respectively, derivations) of certain associative algebras and isomorphisms (respectively, derivations) of related Lie algebras. These conjectures were proved and vastly generalized using the technique of {\em functional identities} --- see \cite{FIbook} and references therein. For example, consider  $\cR_i=\frF_{\Pi_i}(U_i)$ and $\cL_i=[\cR_i,\cR_i]=\fsl(U_i,\Pi_i)$, $i=1,2$, where $U_i$ is an infinite-dimensional vector space over a field $\FF$ and $\Pi_i\subset U_i^\ast$ is a total subspace. Clearly, if $\psi\colon\cR_1\to\cR_2$ is an isomorphism (respectively, antiisomorphism) of associative algebras, then $\psi$ (respectively, $-\psi$) restricts to an isomorphism of Lie algebras $\cL_1\to\cL_2$. The converse is a special case of Herstein's conjecture. It follows that the Lie algebras $\fsl(U_1,\Pi_1)$ and $\fsl(U_2,\Pi_2)$ are isomorphic if and only if $(U_1,\Pi_1)$ is isomorphic to $(U_2,\Pi_2)$ or to $(\Pi_2,U_2)$. Similarly, $\fso(U_1,\Phi_1)$ and $\fso(U_2,\Phi_2)$ (respectively, $\fsp(U_1,\Phi_1)$ and $\fsp(U_2,\Phi_2)$) are isomorphic if and only if $(U_1,\Phi_1)$ is isomorphic to $(U_2,\Phi_2)$. In order to prove that all gradings on a Lie algebra $\cL=\fsl(U,\Pi)$, $\fso(U,\Phi)$ or $\fsp(U,\Phi)$ by an abelian group $G$ come, in some sense, from gradings on the associative algebra $\cR=\fpu$ (with $\Pi=U$ in the last two cases), we will have to describe  surjective Lie homomorphisms $\cL\ot\cH\to\cL\ot\cH$ where $\cH=\FF G$, the group algebra of $G$ over $\FF$. This was done in \cite{BB} under the restriction that $G$ is finite. Our goal now is to remove this restriction.

We first introduce the notation for this section. Let $U$ be a vector space over $\FF$ and let $\Pi\subset U^\ast$ be a total subspace, i.e., we have a nondegenerate bilinear form $(\,,\,):\Pi\times U\to \FF$. Let $\cR=\fpu$. Clearly, $\cR$ is a subalgebra of the algebra $\End(U)$ of all linear operators on $U$. Further, let $\cH$ be an arbitrary unital algebra over $\FF$. We consider  $U\otimes \cH$ as a right $\cH$-module via $(u\otimes k)h = u\otimes kh$, and set $\cQ=\End_\cH(U\otimes \cH)$. We can imbed $\cS\bydef\cR\otimes \cH$ into $\cQ$ via $r\otimes h \mapsto T_{r,h}$, $r\in \cR$, $h\in \cH$, where
\[
T_{r,h}(u\otimes k) = ru\otimes hk.
\]
By abuse of notation, we identify $\cS$ with its image in $\cQ$. Similarly, we identify the elements $r$ in $\cR$ with the operators $T_{r,1}$, so we have $\cR\subset\cS\subset\cQ$. The center of  $\cQ$ will be denoted by $\cC$. It is easy to check that $\cC$ consists of operators of the form $u\otimes k\mapsto u\otimes ck$, where $c$ is in the center of $\cH$, and is equal to the centralizer of $\cS$ in $\cQ$. Thus, we may identify $\cC$ with the center of $\cH$. 

Given an algebra $\cA$, we denote by $M(\cA)$  its {\em  multiplication algebra}. This is the algebra of all maps on $\cA$ of the form $x\mapsto \sum_i a_i x b_i$ with $a_i,b_i\in \cA$. Of course, such maps are defined also on every algebra $\cA'$ that contains $\cA$ as a subalgebra. In this way we may consider $M(\cA)$ as a subalgebra of $M(\cA')$.

\subsection{Preliminary results}\label{ssPR} 

We will now establish several auxiliary results needed for the treatment of functional identities on the algebra $\cS$. 

We begin by a lemma concerning the multiplication algebra of $\cR$.
Let $a_1,\ldots,a_n\in \cR$. Then there exists an idempotent $\veps\in\cR$ such that $a_i\in\veps\cR\veps$ for all $i$ and $\veps\cR\veps  \cong M_n(\FF)$ for some $n\ge 1$. But then, as it is well known and easy to see,  $M(\veps\cR\veps) = \End(\veps\cR\veps)$. Therefore, if $a_1,\ldots,a_n$ are linearly independent, there exists $f\in M(\veps\cR\veps)$ such that 
$f(a_1) \ne 0$ and $f(a_2) = \ldots = f(a_n) =0$. We actually need a slightly different result, namely:

\begin{lemma}\label{Lel}
Let $a_0,a_1,\ldots,a_n\in \cR\cup \{1\}(\subseteq \cQ)$ be linearly independent. 
Then, for each $i$, there exists $f\in M(\cR)$ such that $0\ne f(a_i)\in\cR$ and $f(a_j) =0$ for all $j\ne i$. 
\end{lemma}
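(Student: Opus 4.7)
My plan is to reduce to the finite-dimensional simple case by restricting to a corner $\veps\cR\veps$, where the multiplication algebra is known to coincide with the full endomorphism algebra. The only complication is that $1 \in \cQ$ is not in $\cR$, so at most one of the $a_j$'s can be the unit, and I need to treat it separately.

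First, I would relabel so that $a_0,\ldots,a_{n-1} \in \cR$ and possibly $a_n = 1$ (if none of the $a_j$ equals $1$, the standard argument goes through without modification). Since each $a_j \in \cR$ has finite rank, I can pick an idempotent $\veps_0 \in \cR$ of finite rank with $\veps_0 a_j \veps_0 = a_j$ for $j<n$. To make room for handling the unit, I enlarge it: choose a rank-one idempotent $\veps'' \in \cR$ orthogonal to $\veps_0$ (such $\veps''$ exists because $\Pi$ is total and $U$ is infinite-dimensional, so there are always rank-one operators outside any given finite-rank corner) and set $\veps \bydef \veps_0 + \veps''$. Then $\veps$ is still a finite-rank idempotent, $\veps a_j \veps = a_j$ for $j<n$, and $\veps\cR\veps \cong M_m(\FF)$ for some $m\geq 1$. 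A short argument using $\veps''\veps_0 = \veps_0\veps'' = 0$ shows that $\veps$ cannot lie in $\mathrm{span}\{a_0,\ldots,a_{n-1}\}$: if it did, applying $\veps''(\cdot)\veps''$ to such a linear combination would give $\veps'' = 0$. Hence $\{a_0,\ldots,a_{n-1}, \veps\}$ is linearly independent in $\veps\cR\veps$ in the case $a_n = 1$, and $\{a_0,\ldots,a_{n-1}\}$ is linearly independent in $\veps\cR\veps$ in the case there is no unit among the $a_j$.

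Now since $\veps\cR\veps \cong M_m(\FF)$ is finite-dimensional central simple, we have $M(\veps\cR\veps) = \End_\FF(\veps\cR\veps)$. Hence for the chosen $i$ I can pick $\phi \in M(\veps\cR\veps)$ that takes the value $1$ (or any nonzero element) on the distinguished element and $0$ on all the others from the independent set above: the distinguished element is $a_i$ if $a_i \in \cR$, and is $\veps$ if $a_i = 1$. Write $\phi(x) = \sum_\ell p_\ell\, x\, q_\ell$ with $p_\ell, q_\ell \in \veps\cR\veps$, and let $f \in M(\cR)$ be defined by the same formula (now interpreted on all of $\cR$, and extended to $\cQ$).

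To finish, I verify $f$ separates the $a_j$'s as required. For $a_j \in \cR$, one has $p_\ell a_j q_\ell = p_\ell\veps a_j \veps q_\ell$ (because $p_\ell\veps=p_\ell$ and $\veps q_\ell=q_\ell$), and since $\veps a_j\veps = a_j$, this gives $f(a_j) = \phi(a_j)$. For the unit, $f(1) = \sum p_\ell q_\ell = \sum p_\ell \veps q_\ell = \phi(\veps)$. By choice of $\phi$, this achieves the required pattern of zeros and one nonzero value, and the nonzero value lies in $\veps\cR\veps \subset \cR$. The only real obstacle is ensuring that after absorbing the unit into a corner the independence is preserved, which is exactly why the enlargement by $\veps''$ is needed; everything else is then just the standard fact $M(M_m(\FF)) = \End_\FF(M_m(\FF))$.
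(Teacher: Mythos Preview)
Your proof is correct and follows essentially the same approach as the paper: reduce to a finite matrix corner where $M(M_m(\FF))=\End_\FF(M_m(\FF))$, and handle the unit $1$ by replacing it with the identity of the corner, using an extra orthogonal rank-one idempotent to guarantee that this surrogate remains linearly independent from the $a_j\in\cR$. The only cosmetic difference is that the paper introduces three idempotents (first $\veps$ with $a_i\in\veps\cR\veps$, then an orthogonal $\veps_1$, then a larger $\veps'$ containing everything), whereas you build the enlarged idempotent directly as $\veps=\veps_0+\veps''$; your version is slightly more streamlined, and you also make explicit the linear-independence argument that the paper leaves implicit.
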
 

\begin{proof}
If all $a_i\in \cR$, then this follows from the above argument. We may therefore assume that $a_0=1\notin \cR$. Let $\veps\in \cR$ be an idempotent such that $a_i=\veps a_i\veps$, $1\le i\le n$. We may pick a rank one idempotent $\veps_1\in \cR$ such that $\veps\veps_1=\veps_1\veps=0$. Indeed, since $1\notin \cR$, $U$ must be infinite-dimensional, and the existence of such $\veps_1$ can be easily shown (cf. \cite[Theorem 4.3.1]{BMMb}). Let $\veps'\in \cR$ be an idempotent such that all $\veps,\veps_1,a_1,\ldots,a_n$ are contained in $\veps'\cR \veps'$ and $\veps'\cR\veps'\cong M_m(\FF)$ for some $m\ge 1$. Note that, for every  $f\in M(\veps'\cR\veps')$, we have $f(\veps') = f(1)$. Since $\veps',a_1,\ldots,a_n$ are linearly independent, the existence of $f\in M(\veps'\cR\veps')$ satisfying the desired conditions can now be proved as above.
\end{proof}

\begin{lemma}\label{5SL2}
Let $a_0,a_1,\ldots,a_n\in \cR\cup \{1\}$ be linearly independent. If $q_0,q_1,\ldots,q_n \in \cQ$ are such that  
$\sum_{j=0}^nq_jxa_j = 0$ for all $x\in \cR$, then $q_j=0$ for all $j$. 
Similarly, $\sum_{j=0}^na_jxq_j = 0$ for all $x\in \cR$ implies $q_j=0$ for all $j$.
\end{lemma}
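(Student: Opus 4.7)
The plan is to reduce the general identity to a simpler one, $q_i \cR f(a_i) = 0$ with $f(a_i)\in\cR$ nonzero, and then use the fact that $U$ is a faithful simple $\cR$-module to force $q_i=0$.

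First, I would fix an index $i$ and invoke Lemma \ref{Lel} to find an element $f\in M(\cR)$, say $f(y)=\sum_k u_k y v_k$ with $u_k,v_k\in\cR$, such that $f(a_i)\in\cR\setminus\{0\}$ and $f(a_j)=0$ for all $j\ne i$. The key manoeuvre is then to specialize the hypothesis $\sum_j q_j x a_j=0$: for any $y\in\cR$ and each $k$, the element $yu_k$ lies in $\cR$, so substituting $x=yu_k$ and then right‑multiplying by $v_k$ gives $\sum_j q_j y u_k a_j v_k=0$. Summing over $k$ collapses the $a_j$'s through $f$:
\[
\sum_j q_j\, y\, f(a_j)=0\quad\text{for all}\quad y\in\cR.
\]
By the choice of $f$ only the $j=i$ term survives, so $q_i\, y\, f(a_i)=0$ for every $y\in\cR$.

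Next, I would translate this into a statement on $U\otimes\cH$. An element $q\in\cQ$ vanishes if and only if $q(u\otimes 1)=0$ for every $u\in U$, by $\cH$-linearity. Applied to $u\otimes 1$, the identity $q_i yf(a_i)=0$ becomes $q_i\bigl(yf(a_i)u\otimes 1\bigr)=0$ for all $y\in\cR$ and $u\in U$. Since $f(a_i)\ne 0$ in $\cR$ there exists $u_0\in U$ with $f(a_i)u_0\ne 0$, and because $U$ is a faithful simple left $\cR$-module (Section~\ref{ssGSAMI}) we have $\cR\, f(a_i)u_0=U$. Hence $\cR f(a_i)U=U$, so $q_i(U\otimes 1)=0$ and therefore $q_i=0$. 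As $i$ was arbitrary, all $q_j$ vanish.

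For the second identity $\sum_j a_j x q_j=0$ the same strategy works after swapping the sides: substituting $x=v_k y$ and left‑multiplying by $u_k$, then summing over $k$, yields $\sum_j f(a_j)y q_j=0$, hence $f(a_i)y q_i=0$ for all $y\in\cR$. One now applies this to $u\otimes 1$ after pre-multiplying by an arbitrary $r\in\cR$: the vectors $q_i(u\otimes 1)$ would be annihilated by every $f(a_i)yr$, and since $\cR f(a_i)\cR$ acts as all of $\End$ on sufficiently large finite-dimensional subspaces of $U$ (equivalently, the right module $U^\ast$-action is faithful and simple), we again conclude $q_i=0$. I would phrase this uniformly by passing to the opposite algebra, where the argument is literally the previous one.

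The only genuinely delicate point is the passage from the identity $q_i \cR f(a_i)=0$ in $\cQ$ to $q_i=0$: this is where one must remember that $\cQ$ acts $\cH$-linearly on $U\otimes\cH$ and that $U$ is a faithful irreducible $\cR$-module, so that $\cR f(a_i)U=U$. Once that observation is in place, the rest is routine.
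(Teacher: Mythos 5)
Your proof is correct and follows the same strategy as the paper: apply Lemma \ref{Lel} to produce $f\in M(\cR)$, substitute and re-multiply to collapse the sum to $q_i\,y\,f(a_i)=0$ for all $y\in\cR$, and then use simplicity to conclude $q_i=0$. The only cosmetic difference is that the paper passes through the intermediate assertion $q_i\cR=0$ (via $\cR f(a_i)\cR=\cR$), whereas you go directly to the module $U\otimes\cH$ and invoke simplicity and faithfulness of $U$ as a left $\cR$-module together with $\cH$-linearity; both routes give the same conclusion with essentially the same effort.
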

 
\begin{proof}
Pick $i\in\{0,1,\ldots,n\}$. By Lemma \ref{Lel}, there exists $f\in M(\cR)$, $f\colon x\mapsto \sum b_kxc_k$ for some $b_k,c_k\in \cR$, such that $a=f(a_i)$ is a nonzero element in $\cR$ and $f(a_j) =0$ whenever $j\ne i$. Let us replace $x$ by $xb_k$ in $\sum_{i=0}^nq_ixa_{i} = 0$, and then right-multiply the identity so obtained by $c_k$. Summing up over all $k$, we obtain $0 =\sum_j q_j x f(a_j)= q_ixa$ for every $x\in \cR$. The simplicity of $\cR$ implies $q_i\cR =0$, which clearly yields $q_i=0$.

The condition $\sum_{j=0}^na_jxq_j = 0$ can be treated analogously.
\end{proof}

\begin{lemma}\label{5SL1}
Let $a,b\in \cR$ and $p,q\in \cQ$ be such that $axq=pxb$ for all $x\in \cS$. If $a\ne 0$ and $b\ne 0$, then there
exists $\lambda\in \cC$ such that $p=\lambda a$ and $q=\lambda b$. 
\end{lemma}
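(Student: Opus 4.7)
The plan is to test the identity $axq=pxb$ first against rank-one elements of $\cR$ and then against general elements of the form $r\ot h\in\cS$; the first family will determine $p$ and $q$ up to a common element $c\in\cH$, and the second family will force $c$ to lie in $Z(\cH)=\cC$.

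Fix a basis $\{e_\alpha\}$ of $\cH$ and expand the $\cH$-linear maps $p$ and $q$ as
\[
p(u\ot 1)=\sum_\alpha p_\alpha(u)\ot e_\alpha,\qquad q(w\ot 1)=\sum_\alpha q_\alpha(w)\ot e_\alpha,
\]
with $\FF$-linear operators $p_\alpha,q_\alpha\colon U\to U$. Substituting the rank-one test element $x=u\ot\xi$ (for $u\in U$ and $\xi\in\Pi$) into $axq=pxb$ and evaluating both sides at $w\ot 1$, a routine computation followed by comparison of coefficients of $e_\alpha$ yields the scalar identity
\[
(\xi,q_\alpha(w))\,au \;=\; (\xi,bw)\,p_\alpha(u)\qquad\text{for all } u,w\in U,\ \xi\in\Pi.
\]

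Since $b\ne 0$ and $\Pi$ is total, choose $w^\ast\in U$ and $\xi^\ast\in\Pi$ with $(\xi^\ast,bw^\ast)\ne 0$ and put $c_\alpha\bydef (\xi^\ast,q_\alpha(w^\ast))/(\xi^\ast,bw^\ast)\in\FF$. Specializing $(\xi,w)=(\xi^\ast,w^\ast)$ in the displayed identity gives $p_\alpha(u)=c_\alpha au$ for every $u$; feeding this back into the identity and using $a\ne 0$ together with totality of $\Pi$ yields $q_\alpha(w)=c_\alpha bw$ for every $w$. Setting $c\bydef\sum_\alpha c_\alpha e_\alpha\in\cH$, we conclude $p(u\ot h)=au\ot ch$ and $q(w\ot h)=bw\ot ch$ for all $u,w\in U$ and $h\in\cH$. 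In other words, $p=\lambda a$ and $q=\lambda b$ in $\cQ$, where $\lambda$ is the operator $u\ot h\mapsto u\ot ch$.

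It remains to prove $c\in Z(\cH)$, which is where the strength of the hypothesis $x\in\cS$ (rather than only $x\in\cR$) is used. Taking $x=r\ot h$ with $r\in\cR$ and $h\in\cH$, and evaluating $axq=pxb$ at $w\ot k$, a direct calculation gives $arbw\ot hck=arbw\ot chk$. Simplicity of $\cR$ and the hypotheses $a,b\ne 0$ guarantee $a\cR b\ne 0$, so for some choice of $r\in\cR$ and $w\in U$ we have $arbw\ne 0$; this forces $hck=chk$ for all $h,k\in\cH$, and setting $k=1$ shows $c$ commutes with every $h\in\cH$, i.e.\ $\lambda=c\in\cC$, finishing the proof. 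The main obstacle is the bookkeeping in the first step: the tensor identity must be peeled apart into the displayed scalar form in a way that cleanly separates the $u$-dependence on one side from the $w$-dependence on the other, and this requires being careful that the $c_\alpha$ extracted at a single pair $(\xi^\ast,w^\ast)$ really do propagate back to produce $q_\alpha=c_\alpha b$ globally.
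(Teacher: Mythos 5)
Your proof is correct, and it reaches the conclusion by a genuinely different route from the paper's. The paper's argument picks a single $z\in U$ with $bz\ne 0$, chooses $w\in\Pi$ dual to $y\bydef bz$, and tests $axq=pxb$ at $z\ot 1$ with $x=T_{t,h}$ for $t\colon u\mapsto(w,u)v$ and arbitrary $h\in\cH$; this gives $p(v\ot h)=av\ot hc$ at a stroke, whence $\cH$-linearity of $p$ forces $ch=hc$ directly, and the companion relation $q=\lambda b$ is then deduced by rewriting the identity as $ax(q-\lambda b)=0$ and invoking Lemma~\ref{5SL2}. You instead fix a basis $\{e_\alpha\}$ of $\cH$, decompose $p$ and $q$ coordinate-wise into operators $p_\alpha,q_\alpha\colon U\to U$, and use the separation-of-variables identity $(\xi,q_\alpha(w))\,au=(\xi,bw)\,p_\alpha(u)$ to get $p_\alpha=c_\alpha a$ and $q_\alpha=c_\alpha b$ simultaneously; you never appeal to Lemma~\ref{5SL2}, which makes the argument more self-contained, at the cost of a second evaluation at $x=T_{r,h}$ to establish centrality of $c$ (the paper obtains centrality for free as a byproduct of the first evaluation). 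The only point you leave implicit is that all but finitely many $c_\alpha$ vanish, so that $c=\sum_\alpha c_\alpha e_\alpha$ really is an element of $\cH$; this follows at once from $p_\alpha(u)=c_\alpha au$ upon choosing any $u$ with $au\ne 0$, so it is not a gap, but it would be worth a sentence.
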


\begin{proof}
Pick $z\in U$ such that $b(z\otimes 1)\ne 0$. Since $b\in \cR$, we have $b(z\otimes 1) = y\otimes 1$ for some nonzero $y\in U$. Now choose $w\in\Pi$ so that $(w,y) =1$, and let $x = T_{t,h}\in \cS$ where $h$ is an arbitrary element of $\cH$ and $t\in \cR$ is given by $tu=(w,u)v$, with $v$ an arbitrary element of $U$. The identity $(axq)(z\otimes 1) = (pxb)(z\otimes 1)$ now implies that there is $c\in \cH$ such that $p(v\otimes h) = a(v\otimes hc)$ for all $v\in U$ and $h\in \cH$. As $p$ and $a$ are $\cH$-linear, this yields 
\[
a(v\otimes 1)hc = a(v\ot hc) = p(v\otimes 1)h = a(v\otimes c)h = a(v \otimes 1)ch.
\]
Since $a$ is a nonzero element of $\cR$, this implies $hc = ch$ for all $h\in \cH$. Thus we have indeed $p=\lambda a$ where $\lambda\colon u\otimes k\mapsto u\otimes ck$ belongs to $\cC$. The identity $axq=pxb$ can now be rewritten as $ax(q-\lambda b) =0$, and therefore $q=\lambda b$ by Lemma \ref{5SL2}. 
\end{proof}

\begin{lemma}\label{Lglavna}
Let $a_0,a_1,\ldots,a_n\in \cR\cup \{1\}$ be linearly independent. If $p_i,q_i\in \cQ$ are such that 
$\sum_{i=0}^{n}(p_ixa_{i} + a_i x q_i) = 0$ for all $x\in \cS$, then all $p_i$ and $q_i$ lie in $\sum_{i=0}^n \cC a_i$.
\end{lemma}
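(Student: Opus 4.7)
The plan is to apply the ``sandwich'' operators from Lemma \ref{Lel} in two complementary ways to the given identity, reducing it to a pair of one-sided identities that Lemmas \ref{5SL1} and \ref{5SL2} can resolve directly. No induction on $n$ is needed if the sandwich manipulations are set up in parallel for all indices.

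First, for each $i\in\{0,1,\ldots,n\}$ I will invoke Lemma \ref{Lel} to pick $f_i\in M(\cR)$, written as $f_i(y)=\sum_k b_k^{(i)} y c_k^{(i)}$, such that $f_i(a_i)=\alpha_i$ is a nonzero element of $\cR$ while $f_i(a_j)=0$ for $j\ne i$. For each fixed $j$, the ``right sandwich'' derived from $f_j$ — substitute $x\to xb_k^{(j)}$ in the given identity, right-multiply by $c_k^{(j)}$, and sum over $k$ — will transform the identity into
\[
p_j\,x\,\alpha_j+\sum_{i=0}^n a_i\,x\,r_i^{(j)}=0\quad\text{for all } x\in\cS,\qquad(*)
\]
where $r_i^{(j)}:=\sum_k b_k^{(j)} q_i c_k^{(j)}\in\cQ$. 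The symmetric ``left sandwich'' derived from $f_l$, substituting $x\to c_k^{(l)} x$ and left-multiplying by $b_k^{(l)}$ before summing, will give
\[
\sum_{i=0}^n s_i^{(l)}\,x\,a_i+\alpha_l\,x\,q_l=0\quad\text{for all } x\in\cS,\qquad(**)
\]
where $s_i^{(l)}:=\sum_k b_k^{(l)} p_i c_k^{(l)}\in\cQ$.

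Next I will apply the left sandwich with $f_l$ to identity $(*)$ itself — observing that $f_l$ kills every $a_i$ with $i\ne l$ — in order to obtain
\[
s_j^{(l)}\,x\,\alpha_j+\alpha_l\,x\,r_l^{(j)}=0\quad\text{for all } x\in\cS.
\]
Since $\alpha_l$ and $\alpha_j$ are nonzero elements of $\cR$, Lemma \ref{5SL1} will produce $\mu_{l,j}\in\cC$ with $r_l^{(j)}=\mu_{l,j}\alpha_j$ and $s_j^{(l)}=-\mu_{l,j}\alpha_l$. Feeding the first of these back into $(*)$ and using the centrality of $\mu_{i,j}$ to pull scalars across, I get $\bigl(p_j+\sum_i\mu_{i,j}a_i\bigr)x\,\alpha_j=0$ for all $x\in\cS$; since $\alpha_j\ne 0$, Lemma \ref{5SL2} (applied to the singleton $\{\alpha_j\}$) will force $p_j=-\sum_i\mu_{i,j}a_i\in\sum_i\cC a_i$. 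Substituting the second relation $s_i^{(l)}=-\mu_{l,i}\alpha_l$ into $(**)$ and invoking Lemma \ref{5SL2} in its other form will give $q_l=\sum_i\mu_{l,i}a_i\in\sum_i\cC a_i$.

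The main technical obstacle I anticipate is the bookkeeping at the level of $\cQ$: verifying that substitutions like $x\to xb_k^{(j)}$ together with right-multiplication by $c_k^{(j)}$ and summation over $k$ produce a genuine identity in $\cQ$, and that expressions such as $r_i^{(j)}=\sum_k b_k^{(j)} q_i c_k^{(j)}$ — in which $q_i\in\cQ$ rather than $\cR$ — arise correctly from these operations. Once this is justified, the combination of Lemmas \ref{Lel}, \ref{5SL1}, and \ref{5SL2} slots together cleanly and handles all indices simultaneously.
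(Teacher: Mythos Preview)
Your argument is correct and follows essentially the same route as the paper's proof: both use the sandwich operators from Lemma~\ref{Lel} on each side to collapse the identity to one of the form $\alpha_l x q = p x \alpha_j$, invoke Lemma~\ref{5SL1} to extract a central scalar, and then feed the result back through Lemma~\ref{5SL2}. The only difference is organizational: the paper isolates one $q_j$ at a time (left sandwich first, then right sandwich on the resulting identity) and appeals to symmetry for the $p_i$, whereas you set up all the sandwich operators in parallel and record the outputs as a matrix $(\mu_{l,j})$ of central elements, which lets you read off every $p_j$ and $q_l$ simultaneously. Your concern about the bookkeeping is unfounded: since $b_k^{(j)},c_k^{(j)}\in\cR\subset\cS\subset\cQ$, the substitutions and multiplications take place entirely inside $\cQ$, and the expressions $r_i^{(j)}=\sum_k b_k^{(j)}q_i c_k^{(j)}$ are perfectly well-defined elements of $\cQ$.
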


\begin{proof} 
Let $f\in M(\cR)$. Arguing similarly to the proof of Lemma \ref{5SL2}, we see that the given identity implies 
$\sum_{i=0}^{n}(f(p_i)xa_{i} + f(a_i) x q_i) = 0$ for all $x\in \cS$. 
By Lemma \ref{Lel}, we can choose $f$ so that the last identity can be written as
\begin{equation}\label{ee1}
\sum_{i=0}^{n}p_i'xa_{i} + axq_0  = 0 \quad\mbox{for all}\quad x\in \cS,
\end{equation}
where $0\ne a\in \cR$ and $p_i'\in \cQ$. For any $g\in M(\cR)$, this yields $\sum_{i=0}^{n}p_i'xg(a_i) + axg(q_0) = 0$ for all $x\in \cS$. Invoking Lemma \ref{Lel} again, we see that, for any fixed $i$, we can choose $g$ so that the last identity reduces to $axq = p_i'x b$ with $q\in \cQ$ and $0\ne b\in \cR$. Now Lemma \ref{5SL1} tells us that $p_i' = \lambda_i a$ for some $\lambda_i\in \cC$. Accordingly, \eqref{ee1} can be written as $ax(q_0 + \sum_{i=0}^n \lambda_i a_i) = 0$ for all $x\in \cS$. This yields $q_0 = - \sum_{i=0}^n \lambda_i a_i \in \sum_{i=0}^n \cC a_i$ by Lemma \ref{5SL2}. In a similar way, we obtain the same conclusion for the other $q_i$ and for all $p_i$.
\end{proof}

\begin{lemma} \label{Lbasic}
Let $c$ be a nonzero element of $\cR$. If $h\colon\cS\to \cQ$ is an linear map satisfying $h(xcy) = h(x)yc$ for all $x,y\in \cS$, then there exists $q\in \cQ$ such that $h(x)= qxc$ for all $x\in \cS$. 
\end{lemma}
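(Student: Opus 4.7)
The plan is to construct $q \in \cQ$ explicitly as an $\cH$-linear endomorphism of $U \otimes \cH$ and then verify the formula $h(x) = qxc$. The construction exploits the fact that $\cS\cdot c$ acts transitively on $U \otimes \cH$ starting from a single well-chosen vector.

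First I would choose $z_0 \in U$ with $y_0 \bydef c z_0 \neq 0$ and $f_0 \in \Pi$ with $f_0(y_0) = 1$, and set $w \bydef z_0 \otimes 1 \in U \otimes \cH$. A direct check shows that, for every $u \otimes k \in U \otimes \cH$, the element $T_{u \otimes f_0, k} \in \cS$ sends $c(w) = y_0 \otimes 1$ to $u \otimes k$; in particular $\cS c(w) = U \otimes \cH$, so every $v \in U \otimes \cH$ has the form $y c(w)$ for some $y \in \cS$.

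The central step is the following well-definedness claim: if $s \in \cS$ satisfies $sc(w) = 0$, then $h(s)(w) = 0$. Setting $y' \bydef T_{z_0 \otimes f_0, 1} \in \cS$, one verifies $y'c(w) = w$, so the hypothesis gives $h(s)(w) = h(s)\, y'c(w) = h(scy')(w)$. Using that $sc \in \cQ$ is $\cH$-linear together with $y'(u \otimes k) = f_0(u)\, z_0 \otimes k$, a direct computation shows $scy' = 0$ as an operator on $U \otimes \cH$, hence $h(s)(w) = 0$. This allows the formula $q(v) \bydef h(y)(w)$, for any $y \in \cS$ with $yc(w) = v$, to define a linear map $U \otimes \cH \to U \otimes \cH$. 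For its $\cH$-linearity, one notes that the idempotent $e \bydef y_0 \otimes f_0 \in \cR$ equals $c \cdot (z_0 \otimes f_0)$, so in $\cS$ one has $e \otimes k = c \cdot ((z_0 \otimes f_0) \otimes k)$; applying the hypothesis to the product $y \cdot c \cdot ((z_0 \otimes f_0) \otimes k)$ and evaluating at $w$ yields $q(vk) = q(v) k$, placing $q$ in $\cQ$.

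Finally, for any $x \in \cS$ and any $v \in U \otimes \cH$, pick $y \in \cS$ with $yc(w) = v$; then $xc(v) = xcyc(w) = (xcy)c(w)$, so by the hypothesis $q(xc(v)) = h(xcy)(w) = h(x)\, yc(w) = h(x)(v)$, proving $h(x) = qxc$. I expect the main obstacle to be the well-definedness step, which is the only place where the structural hypothesis $h(xcy) = h(x)yc$ is used substantively; once that is in place, the rest is a straightforward identification.
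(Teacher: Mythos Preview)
Your argument is correct, and it takes a genuinely different route from the paper's proof. The paper first constructs an auxiliary map $\varphi\colon\cS\to\cQ$ by the formula $\varphi\bigl(\sum_i x_i c y_i\bigr)=\sum_i h(x_i)y_i$ (well-definedness here requires Lemma~\ref{5SL2}), checks that $\varphi(xy)=\varphi(x)y$ for all $x,y\in\cS$, and then shows that any such right $\cS$-linear map is left multiplication by some $q\in\cQ$, using a full basis $\{v_i\}$ of $U$ and the idempotents $\veps_i$. In contrast, you work pointwise on $U\otimes\cH$: you choose a single vector $w=z_0\otimes 1$ with $\cS c(w)=U\otimes\cH$, define $q$ directly by $q(yc(w))=h(y)(w)$, and establish well-definedness via the explicit identity $scy'=0$ whenever $sc(w)=0$.

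Your approach is more elementary in that it avoids the appeal to Lemma~\ref{5SL2} and does not require choosing a basis of $U$; the price is that the argument is tied to the concrete module $U\otimes\cH$ and the specific elements $z_0,f_0$, whereas the paper's factorization through $\varphi$ isolates a reusable right-$\cS$-module statement. Both proofs ultimately exploit the same structural fact---that operators of rank one in $\cR$ let one transport information from a single vector to all of $U\otimes\cH$---but you do this directly at the level of vectors while the paper does it at the level of operators.
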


\begin{proof}
Since $\cR$ is simple, the ideal of $\cR$ generated by $c$ is equal to $\cR$.  This readily implies that the ideal of $\cS$ generated by $c$ is equal to $\cS$.  Let us define $\varphi\colon\cS\to \cQ$ by $\varphi(\sum_i x_icy_i) = \sum_i h(x_i)y_i$, $x_i,y_i\in \cS$. To see that $\varphi$ is well-defined, assume that $\sum_i x_icy_i =0$ for some $x_i,y_i\in \cS$. Then $0 = h(\sum_i x_icy_i) = \sum_i h(x_icy_i) = \sum_i h(x_i)y_ic$. Lemma \ref{5SL2} tells us that then $\sum_i h(x_i)y_i = 0$ as desired. Obviously, $\varphi$ is a linear map satisfying $\varphi(xy) = \varphi(x)y$ for all $x,y\in \cS$. If there is $q\in \cQ$ such that $\varphi(x) = qx$ for every $x\in \cS$, then it will follow that $qxcy= \varphi(xcy) = h(x)y$ for all $x,y\in \cS$, yielding the the conclusion of the lemma. Let us therefore prove that such $q$ indeed exists.

Choose a basis $\{v_i\,|\,i\in I\}$ of $U$. For each $i$ pick $w_i\in\Pi$ such that $(w_i,v_i) =1$, and denote by $\veps_i$ the operator in $\cR$ given by $\veps_i(u)=(w_i,u) v_i$. We now define $q\in \cQ$ as the $\cH$-linear map determined by 
\[
q(v_i\otimes 1) = \varphi(\veps_i)(v_i\otimes 1)\quad\mbox{for all}\quad i\in I. 
\]
Now choose arbitrary $v\in U$, $w\in \Pi$, $h\in \cH$ and $i\in I$. Let $t_i\in \cR$ be defined by
$t_i(u) = (w,u) v_i$. We have
\begin{align*}
\varphi(T_{t_i,h})(v\otimes 1) &= \varphi(\veps_i T_{t_i,h})(v\otimes 1) =  \varphi(\veps_i)T_{t_i,h}(v\otimes 1) \\
&= (w,v) \varphi(\veps_i)(v_i\otimes h) = (w,v) q(v_i\otimes h) = (q T_{t_i,h})(v\otimes 1).
\end{align*}
Thus, $\varphi(T_{t_i,h}) = q T_{t_i,h}$. Since the elements of the form $T_{t_i,h}$ linearly span $\cS$, this shows that $\varphi(x) = qx$ for all $x\in \cS$.
\end{proof}

\subsection{Functional identities} 

Now we briefly cover the necessary background on functional identities. We refer the reader to the book \cite{FIbook} for details.

Let $\Q$ be a unital ring with center $\mathfrak{C}$ and let $\R$ be a nonempty subset of $\Q$. (We are actually interested in the situation where $\Q$ is the algebra $\cQ$ and $\R$ is its subalgebra $\cS$, but the definitions that follow make sense in this abstract setting.) For any positive integer $k$,  $\R^k$ will denote the Cartesian product of $k$ copies of $\R$. Let us fix a positive integer $m$. Given $x_1,\ldots,x_m\in \R$ and $1\le i < j \le m$, we will write
\begin{align*}
\ov{x}_m &= (x_1,\ldots,x_m)\in\R^{m},\\
\ov{x}_{m}^{i}&= (x_1,\ldots,x_{i-1},x_{i+1},\ldots,x_m)\in\R^{m-1},\\
\ov{x}_{m}^{ij}= \ov{x}_{m}^{ji} &=
(x_1,\ldots,x_{i-1},x_{i+1},\ldots,
x_{j-1},x_{j+1}\ldots,x_m)\in\R^{m-2}.
\end{align*} 
For functions $F\colon\R^{m-1}\to \Q$ and $p\colon\R^{m-2}\to \Q$, we will write $F^i$ instead of $F(\ov{x}_{m}^{i})$, and $p^{ij}$ instead of $p(\ov{x}_{m}^{ij})$. Let $a\in \Q$ be such that $a\R\subset \R$. For a function $H\colon\R^m\to \Q$, we will write
\[
H(x_ka)\quad\mbox{for}\quad H(x_1,\ldots,x_{k-1},x_ka,x_{k+1},\ldots,x_m),
\]
and, for $F\colon\R^{m-1}\to\Q$, we will write
\[
F^i(x_ka)\quad\mbox{for}\quad
\left\{\begin{array}{ll}
F(x_1,\ldots,x_{i-1},x_{i+1},\ldots,x_{k-1},x_ka,x_{k+1},\ldots,x_m)&\mbox{if}\quad i < k,\\
F(x_1,\ldots,x_{k-1},x_ka,x_{k+1},\ldots,x_{i-1},x_{i+1},\ldots,x_m)&\mbox{if}\quad i > k.
\end{array}\right.
\]
We define $H(ax_k)$ and $F^i(ax_k)$ in a similar way. 

If $m=1$ then a map $F^i$ (where necessarily $i=1$) should be understood as a constant, i.\,e., it can be identified with an element of $\Q$. Similarly, if $m=2$ then a map $p^{ij}$ (where necessarily $i=1$ and $j=2$) is just an element of $\Q$.

Throughout, $I$ and $J$  will be subsets of $\{1,2,\ldots,m\}$. Let 
\[
E_i,F_j\colon\R^{m-1}\to\Q,\,\, i\in I,\,\, j\in J,
\] 
be arbitrary functions. The basic functional identity is 
\[
\sum_{i\in I}E_i(\ov{x}_m^i)x_i+ \sum_{j\in J}x_jF_j(\ov{x}_m^j) = 0\quad\mbox{for all}\quad \ov{x}_m\in \R^m.
\]
Using the abbreviations introduced above, we can write this as
\begin{equation}\label{fi1}
\sum_{i\in I}E_i^i x_i+ \sum_{j\in J}x_jF_j^j = 0\quad\mbox{for all}\quad \ov{x}_m\in \R^m.
\end{equation}
A slightly weaker functional identity
\begin{equation}\label{fi2}
\sum_{i\in I}E_i^i x_i+ \sum_{j\in J}x_jF_j^j \in \mathfrak{C}\quad\mbox{for all}\quad \ov{x}_m\in \R^m
\end{equation}
is also important. The cases where $I=\varnothing$ or $J=\varnothing$ are not excluded; here we follow the convention that the sum over $\varnothing$ is $0$.

When studying a functional identity, the goal is to describe the functions involved. A natural situation when \eqref{fi1} --- and hence also \eqref{fi2} --- holds is the following: there exist maps
\begin{align*}
p_{ij}\colon & \R^{m-2}\to \Q,\quad i\in I,\, j\in J,\, i\not=j,\\
\lambda_k\colon & \R^{m-1}\to \mathfrak{C},\quad k\in I\cup J,
\end{align*}
such that, for all $\ov{x}_m\in \R^m$,
\begin{align}\label{fi3}
&E_i^i =  \sum_{\ontop{j\in J,}{j\not=i}}x_jp_{ij}^{ij}+\lambda_i^i,\quad i\in I,\qquad\mbox{and}\qquad
F_j^j =  -\sum_{\ontop{i\in I,}{i\not=j}}p_{ij}^{ij}x_i-\lambda_j^j,\quad j\in J,\\
& \mbox{where}\quad \lambda_k=0\quad\mbox{if}\quad k\not\in I\cap J.\nonumber
\end{align} 
We call \eqref{fi3} a {\em standard solution} of \eqref{fi1} and \eqref{fi2}. One usually wants to show that \eqref{fi1} and \eqref{fi2} have only standard solutions. Let $d$ be a positive integer. We say that $\R$ is a {\em $d$-free subset} of $\Q$ if, 
for all $m\ge 1$ and all $I,J\subseteq \{1,2,\ldots,m\}$, the following two conditions are satisfied:
\begin{enumerate}
\item[(a)] If $\max\{|I|,|J|\}\le d$, then (\ref{fi1}) implies (\ref{fi3}).
\item[(b)] If $\max\{|I|,|J|\}\le d-1$, then (\ref{fi2}) implies (\ref{fi3}).
\end{enumerate}
If (a) holds, then (b) is equivalent to the following: 
\begin{enumerate}
\item[(b$'$)] If $\max\{|I|,|J|\}\le d-1$, then (\ref{fi2}) implies (\ref{fi1}).
\end{enumerate}

The $d$-freeness of sets is usually proved by establishing a more general property called the $(t;d)$-freeness. Let us therefore define this auxiliary concept. We need to introduce some more notation. Let $t$ be a fixed element in $\Q$ such that $t\R\subseteq \R$. This assumption is not necessary for the general definition of $(t;d)$-freeness, but it will make it a bit simpler. We warn the reader that the definition that we are about to give is subject to this condition (which will be satisfied in the case we are interested in). We set $t^0=1$.
Let $a$ and $b$ be nonnegative integers, and let $E_{iu}:\R^{m-1}\to \Q$, $i\in I$, $0\le u\le a$, and $F_{jv}:\R^{m-1}\to \Q$, $j\in j$, $0\le v\le b$, be arbitrary functions. Consider the functional identity
\begin{equation}\label{5S3}
\sum_{i\in I}\sum_{u=0}^aE_{iu}^ix_it^u+\sum_{j\in J}\sum_{v=0}^bt^vx_jF_{jv}^j=0\quad\mbox{for all}\quad \ov{x}_m\in \R.
\end{equation}
Note that \eqref{5S3} coincides with \eqref{fi1} if $a=b=0$. A standard solution of \eqref{5S3} is defined as any solution of the form 
\begin{align}\label{5S4}
&E_{iu}^i =  \sum_{\ontop{j\in J,}{j\not=i}}\sum_{v=0}^bt^vx_jp_{iujv}^{ij}+\sum_{v=0}^b\lambda_{iuv}^it^v,\nonumber\\
&F_{jv}^j =  -\sum_{\ontop{i\in I,}{i\not=j}}\sum_{u=0}^ap_{iujv}^{ij}x_it^u-\sum_{u=0}^a\lambda_{juv}^jt^u,\\
& \mbox{where}\quad \lambda_{kuv}=0\quad\mbox{if}\quad k\not\in I\cap J,\nonumber
\end{align}
for all $\ov{x}_m\in \R^m$, where $p_{iujv}:\R^{m-2}\to \Q$ and $\lambda_{kuv}:\R^{m-1}\to \mathfrak{C}$ are arbitrary functions. 
We say that $\R$ is a {\em $(t;d)$-free subset} of $\Q$ if the following holds:
\begin{enumerate}
\item[(c)]If
$\max\{|I| + a,|J|+ b\}\le d$, then (\ref{5S3}) implies (\ref{5S4}).
\end{enumerate}
If $\R$ is a $(t;d)$-free subset of $\Q$ for some $t\in \Q$ (such that $t\R\subseteq \R)$, then it is also a $d$-free subset of $\Q$ --- see \cite[Lemmas 3.12 and 3.13]{FIbook}. This result is easy to prove, but it is of great importance.

\subsection{Applications to algebras of linear transformations}\label{ssALALT} 

We now return to our algebras $\cR=\fpu$, $\cS=\cR\otimes \cH$ and $\cQ= {\rm End}_\cH(U\otimes \cH)$. 
In what follows we assume that $t$ is a fixed element in $\cR$ (hence $t\cS\subseteq \cS$). The goal is to show that, under a mild condition,
$\cS$ is a $(t;d)$-free subset of $\cQ$. This condition depends on the {\em degree of algebraicity} of $t$, which we denote by $\deg(t)$. 

We can actually prove slightly more: the algebra $\cR =\fpu$ can be replaced by any larger algebra $\cA$ contained in End$(U)$. Clearly, $\cA\otimes \cH$ can be imbedded into $\cQ$ in exactly the same fashion as we have imbedded $\cR\otimes \cH$ into $\cQ$, so, by abuse of notation, we will write $\cS=\cR\otimes \cH\subset \cA\otimes \cH\subset \cQ$.

Using the lemmas of Subsection \ref{ssPR}, our goal can be achieved by making appropriate modifications in the standard proof of 
$(t;d)$-freeness of rings, based on the notion of fractional degree --- see \cite[Section 5.1]{FIbook}. Nevertheless, we will give the details for completeness. 

\begin{theorem} \label{MainT}
Let $\cA$ be any algebra such that $\fpu\subset\cA\subset\End(U)$. Let $t\in \fpu$. If $d\ge 1$ is such that $d\le \deg(t)$, then $\cA\otimes\cH$ is a $(t;d)$-free subset of $\cQ = \End_\cH(U\otimes \cH)$. In particular, $\cA\otimes\cH$ is a $d$-free subset of $\cQ$.
\end{theorem}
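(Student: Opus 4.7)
The plan is to proceed by induction on $m$, using Lemma~\ref{Lglavna} as the main engine. Since $t \in \fpu \subseteq \cR$, every power $t^u$ lies in $\cR \cup \{1\}$, and the hypothesis $d \le \deg(t)$ combined with $a, b \le d - 1$ will guarantee that $1, t, t^2, \ldots, t^{\max(a,b)}$ form a linearly independent subset. This is the only place where the bound $d \le \deg(t)$ should enter the argument. Note that replacing $\cR = \fpu$ by the larger $\cA$ changes nothing in the substitution side of Lemma~\ref{Lglavna}, which still ranges over $x \in \cS = \cR \otimes \cH$.

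For the base case $m = 1$, the identity \eqref{5S3} takes the shape
\[
\sum_{u=0}^{a} E_{u}\, x_1 t^u + \sum_{v=0}^{b} t^v x_1 F_{v} = 0 \quad\mbox{for all}\quad x_1 \in \cS,
\]
with constants $E_u, F_v \in \cQ$. I would invoke Lemma~\ref{Lglavna} with the $a_i$'s being the linearly independent powers of $t$, concluding that each $E_u$ and each $F_v$ lies in $\sum_w \cC t^w$. Substituting these representations back into the identity and appealing to Lemma~\ref{5SL2} will match up the scalar coefficients and yield the standard form \eqref{5S4}, with all $p_{iujv} = 0$ since no cross-terms are available in a single variable.

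For the inductive step $m \ge 2$, I may assume $1 \in I \cup J$ (otherwise $x_1$ is absent and the identity reduces directly to one in fewer variables). Freezing $x_2, \ldots, x_m$, the identity becomes
\[
\sum_{u=0}^{a} E_{1u}^{1}\, x_1 t^u + \sum_{v=0}^{b} t^v x_1 F_{1v}^{1} + C = 0,
\]
where $C \in \cQ$ collects the terms with $i, j \ne 1$. Applying this at two independent points $x_1, x_1'$ and subtracting kills $C$, placing the identity into base-case shape in the two fresh variables $x_1 - x_1'$; Lemma~\ref{Lglavna} then produces representations
\[
E_{1u}^{1} = \sum_{j \in J,\, j\ne 1} \sum_{v=0}^{b} t^v x_j\, p_{1u, jv}^{1j} + \sum_{v=0}^{b} \lambda_{1 u v}^{1}\, t^v,
\]
and a dual expression for $F_{1v}^{1}$, with new parametric functions depending on $x_2, \ldots, x_m$. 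Plugging these back into \eqref{5S3} cancels the $i = 1$ and $j = 1$ contributions and produces a functional identity of the same shape with $I$ and $J$ replaced by $I \setminus \{1\}$ and $J \setminus \{1\}$, to which the induction hypothesis applies.

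The hard part will be the combinatorial bookkeeping needed to verify that the $p$-functions extracted by freezing at different positions agree on re-expansion, so that the final standard solution is genuinely symmetric in the form prescribed by \eqref{5S4}; this is carried out exactly as in \cite[Section~5.1]{FIbook}, the only substantive replacement being that the prime-ring technical lemmas used there are furnished in our non-unital setting by Lemmas~\ref{5SL2}, \ref{5SL1}, \ref{Lbasic}, and above all Lemma~\ref{Lglavna}. The concluding ``in particular'' assertion --- $d$-freeness of $\cA \otimes \cH$ --- is then immediate from \cite[Lemmas~3.12 and 3.13]{FIbook}, which assert that $(t;d)$-freeness entails $d$-freeness.
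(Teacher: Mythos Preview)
Your inductive step has a genuine gap. When you freeze $x_2,\ldots,x_m$ and write the identity as
\[
\sum_{u=0}^{a} E_{1u}^{1}\, x_1 t^u + \sum_{v=0}^{b} t^v x_1 F_{1v}^{1} + C = 0,
\]
the term $C$ is \emph{not} a constant in $x_1$: for each $i\ne 1$ in $I$ the summand $E_{iu}^{i}x_i t^u$ has $x_i$ frozen but $E_{iu}^{i}=E_{iu}(\ov{x}_m^i)$ is an arbitrary function of $x_1$ (and likewise for the $F_{jv}^j$ with $j\ne 1$). Subtracting the values at $x_1$ and $x_1'$ therefore does not kill $C$; it leaves the residue $C(x_1)-C(x_1')$, which still mixes the unknown functions with the frozen variables. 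So you never reach an identity to which Lemma~\ref{Lglavna} applies. Even if that step could be repaired, Lemma~\ref{Lglavna} would only tell you that $E_{1u}^{1}\in\sum_w\cC t^w$ for each frozen choice of $x_2,\ldots,x_m$; it cannot by itself produce the $t^v x_j\,p_{1u,jv}^{1j}$ terms you claim, because the lemma knows nothing about the variable $x_j$.

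The paper's proof (following \cite[Section~5.1]{FIbook}) uses a different device that you are missing: instead of freezing variables, one substitutes $x_1\mapsto x_1 t$ (or $x_1\mapsto t x_1$) and forms $H(x_1 t)-H(\ov{x}_m)t$. Because $E_{1u}^{1}$ does not depend on $x_1$, the $i=1$ terms cancel exactly, and what remains is an identity of the same shape with $I$ replaced by $I\setminus\{1\}$ and $a$ replaced by $a+1$, preserving the bound $|I|+a\le d\le\deg(t)$. This drives an induction on $|I|$ (for one-sided identities) and then on $|I|+|J|$ (for the general case), with Lemma~\ref{Lglavna} entering only at the genuine base case $|I|=|J|=1$, $I=J$, and Lemma~\ref{Lbasic} handling the base case $I\ne J$. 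Your base case $m=1$ is fine and coincides with the paper's treatment of $|I|=|J|=1$, $I=J$; it is the reduction mechanism that is wrong.
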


\begin{proof} 

One of the fundamental theorems of the theory of functional identities states that if a set is $(t;d)$-free, then any larger set is also $(t;d)$-free \cite[Theorem 3.14]{FIbook}. Therefore, we may assume without loss of generality that $\cA$ is equal to $\cR = \fpu$. Thus, all we have to show is that $\cS=\cR\otimes \cH$ is a $(t;d)$-free subset of $\cQ$, i.e., that \eqref{5S3} implies \eqref{5S4}, where $\R=\cS$, $\Q=\cQ$ and $\deg(t)\geq\max\{|I|+a,|J|+b\}$. We first consider the situation where only one summation in \eqref{5S3} appears, i.e.,  either $I=\varnothing$ or $J=\varnothing$.

\smallskip

{\bf Claim 1}. If $\deg(t)\geq |I|+a$ and 
\[
\sum_{i\in I}\sum_{u=0}^aE^i_{iu}x_it^u=0\quad\mbox{for all}\quad\ov{x}_m\in \cS^m,
\]
then all $E_{iu}=0$.

\smallskip

We prove this claim by induction on $|I|$. Suppose that $|I|=1$, say $I=\{1\}$. Then we have
\[
\sum_{u=0}^aE^1_{1u}x_1t^u = 0\quad\mbox{for all}\quad x_1\in \cS.
\]
Lemma \ref{5SL2} implies that each $E_{1u}=0$. Now suppose that $|I|>1$, say $1,2\in I$. Set
\[
H(\ov{x}_m)=\sum_{i\in I}\sum_{u=0}^aE^i_{iu}x_it^u.
\]
Then we have
\begin{align*}
0 &= H(x_1t)-H(\ov{x}_m)t\\
&= \sum_{\ontop{i\in I,}{i\not=1}}E_{i0}^i(x_1t)x_i
+\sum_{\ontop{i\in I,}{i\not=1}}\sum_{u=1}^a\left(E_{iu}^i(x_1t)-E_{i,u-1}^i\right)x_it^u
-\sum_{\ontop{i\in I,}{i\not=1}}E_{ia}^ix_it^{a+1}.
\end{align*}
We have arrived at the same type of identity as the original one, but with $I\setminus{\{1\}}$ playing the role of $I$ and $a+1$ playing the role of $a$. The induction hypothesis is therefore applicable. Consequently, $E_{ia}=0$ and $E_{iu}^i(x_1t)-E_{i,u-1}^i=0$ for all $i\not=1$, $u=0,1,\ldots,a$. Hence each $E_{iu}=0$ for $i\not=1$. Switching the roles of $x_1$ and $x_2$, we obtain $E_{1u}=0$ for every $u$.

\smallskip

{\bf Claim 2}. If $\deg(t)\geq |J|+b$ and
\[
\sum_{j\in J}\sum_{v=0}^bt^vx_jF^j_{jv}=0\quad\mbox{for all}\quad \ov{x}_m\in \cS^m, 
\]
then all $F_{jv}=0$.

\smallskip

The proof is similar to the proof of Claim 1. We are now ready to handle the general case.

\smallskip

{\bf Claim 3}. If $\deg(t)\geq\max\{|I|+a,|J|+b\}$ and \eqref{5S3} holds for all $\ov{x}_m\in \cS^m$, then \eqref{5S4} holds for all $\ov{x}_m\in \cS^m$.

\smallskip

The proof of this final claim will be longer. Let us begin by observing that it suffices to prove that the functions appearing in only one of the two summations in \eqref{5S3} have the desired form. Indeed, assuming that all the $E_{iu}$'s are of the form \eqref{5S4} (in
particular, $\lambda_{iuv} =0$ for $i\notin J$), it follows from \eqref{5S3} that
\[
\sum_{j\in J}\sum_{v=0}^bt^vx_j
\left[F_{jv}^j +\sum_{\ontop{i\in I,}{i\not=j}}\sum_{u=0}^ap_{iujv}^{ij}x_it^u+\sum_{u=0}^a\lambda_{juv}^jt^u\right]=0,
\]
and hence Claim 2 implies that all the $F_{jv}$'s are of the form \eqref{5S4} as well. Similarly, if all the $F_{jv}$'s
are of the form \eqref{5S4}, then Claim 1 implies that all the $E_{iu}$'s are also of the form \eqref{5S4}.

We proceed by induction on $|I|+|J|$. Claims 1 and 2 cover the cases where $|I|=0$ or $|J|=0$. Therefore, the first case
that has to be considered is $|I|=1=|J|$. We have to consider separately two subcases: when $I = J$ and when $I\ne J$.

In the first subcase, we may assume that $I=J=\{1\}$, so we have
\[
\sum_{u=0}^aE_{1u}^1x_1t^u+\sum_{v=0}^bt^vx_1F_{1v}^1=0.
\]
Fixing $x_2,x_3\ldots,x_m$, we may apply Lemma \ref{Lglavna} to conclude that
\[
E_{1u}(x_2,\ldots,x_m)=\sum_{v=0}^b\lambda_{1uv}(x_2,\ldots,x_m)t^v
\]
for some $\lambda_{1uv}(x_2,\ldots,x_m)\in \cC$, $0\leq u\leq a$, $0\leq v\leq b$. In other words, the $E_{1u}$'s are of the form \eqref{5S4}, and hence the $F_{1v}$'s are of the form \eqref{5S4} as well.

In the second subcase, we may assume that $I=\{2\}$ and $J=\{1\}$. Thus,
\begin{equation}\label{5ME11}
D(\ov{x}_m) \bydef \sum_{u=0}^aE_{2u}^2x_2t^u+\sum_{v=0}^bt^vx_1F_{1v}^1=0.
\end{equation}
We claim that the $F_{1v}$'s are linear in $x_2$. Indeed, if we replace $x_2$ by $\alpha x_2'+\beta x_2''$ ($\alpha,\beta\in \FF$) in \eqref{5ME11}, then we obtain
\[
\sum_{v=0}^bt^v x_1\left[F_{1v}^1(\alpha x_2'+\beta x_2'')-\alpha F_{1v}^1(x_2')-\beta F_{1v}^1(x_2'')\right]=0,
\]
where we have simplified the notation by neglecting $x_3,\ldots,x_m$. Claim 2 now implies that $F_{1v}^1(\alpha x_2'+\beta x_2'')=\alpha F_{1v}^1(x_2')+\beta F_{1v}^1(x_2'')$, as desired.  Fix $0\le u\le a$. Since $\deg(t)>a$, by Lemma \ref{Lel} there exist
$c_\ell,d_\ell\in \cR$, $\ell=1,2,\ldots,n$, such that $\sum_{\ell=1}^n c_\ell t^w d_\ell=0$ for $w\ne u$ and 
$c=\sum_{\ell=1}^n c_\ell t^ud_\ell$ is a nonzero element of $\cR$. Now \eqref{5ME11} implies
\begin{equation}\label{5S6}
0 = \sum_{\ell=1}^n D(x_2c_\ell)d_\ell = E_{2u}^2x_2c+\sum_{v=0}^bt^vx_1H_{1v}^1  
\end{equation}
where $H_{1v}^1 =H_{1v}(x_2,\ldots,x_m)=\sum_{\ell=1}^n F_{1v}(x_2c_\ell,x_3,\ldots,x_m)d_\ell$, and hence the $H_{1v}$'s are linear in $x_2$. Replacing $x_2$ by $x_2cy$ in \eqref{5S6} yields
\[
E_{2u}^2x_2cyc+\sum_{v=0}^bt^vx_1H_{1v}(x_2cy,x_3,\ldots,x_m) =0.
\]
On the other hand, \eqref{5S6} implies that 
\[
E_{2u}^2x_2cyc + \sum_{v=0}^bt^vx_1H_{1v}(x_2,x_3,\ldots,x_m)yc = 0.
\] 
Comparing the last two equations, we get
\[
\sum_{v=0}^b t^vx_1\left[H_{1v}(x_2cy,x_3,\ldots,x_m) - H_{1v}(x_2,x_3,\ldots,x_m)yc\right] = 0.
\]
Lemma \ref{5SL2} now tells us that 
\[
H_{1v}(x_2cy,x_3,\ldots,x_m) = H_{1v}(x_2,x_3,\ldots,x_m)yc
\]
for all $v=0,1,\ldots,b$ and for all $y,x_2,x_3,\ldots,x_m\in \cS$. We are now in a position to apply Lemma \ref{Lbasic}. 
Therefore, for any fixed $x_3,\ldots,x_m$ there exists $q_v(x_3,\ldots,x_m)\in \cQ$ such that
\[
H_{1v}(\ov{x}_m^1) = q_v(\ov{x}_m^{12})x_2c.
\]
Setting $p_v = -q_v$ and going back to \eqref{5S6}, we now have
\[
\left[E_{2u}^2-\sum_{v=0}^bt^vx_1p_v^{12}\right]x_2c = 0.
\]
Using Lemma \ref{5SL2} again, we conclude that $E_{2u}^2=\sum_{v=0}^bt^vx_1p_v^{12}$. This means  that all the $E_{2u}$'s are of the form 
\eqref{5S4}, and hence the same holds true for all the $F_{1v}$'s. The proof of the case $|I|=1=|J|$ is complete.

We may now assume $|I|+|J|>2$. Without loss of generality, assume $|J|\geq 2$, say $1,2\in J$. We have
\[
H(\ov{x}_m) \bydef \sum_{i\in I}\sum_{u=0}^aE_{iu}^ix_it^u+\sum_{j\in J}\sum_{v=0}^bt^vx_jF_{jv}^j = 0.
\]
Computing $H(tx_1)-tH(\ov{x}_m)$, we obtain 
\[
\begin{split}
&\sum_{i\in I}\sum_{u=0}^aG_{iu}^ix_it^u + \sum_{\ontop{j\in J,}{j\not=1}}x_jF_{j0}^j(tx_1)\\
&+\sum_{\ontop{j\in J,}{j\not=1}}\sum_{v=1}^bt^vx_j\left(F_{jv}^j(tx_1)-F_{j,v-1}^j\right)
-\sum_{\ontop{j\in J,}{j\not=1}}t^{b+1}x_jF_{jb}^j=0
\end{split}
\]
for appropriate $G_{iu}$'s. This is the same type of identity, but with $J\setminus\{1\}$ playing the role of $J$ and $b+1$ playing the role of $b$. Applying the induction hypothesis, we obtain
\begin{align*}
F_{jb}^j & =  -\sum_{\ontop{i\in I,}{i\not=j}}\sum_{u=0}^aq_{iuj\,b+1}^{ij}x_it^u-\sum_{u=0}^a\mu_{ju\,b+1}^jt^u,\quad j\not=1\\
F_{jv}^j(tx_1)-F_{j,v-1}^j & =  -\sum_{\ontop{i\in I,}{i\not=j}}\sum_{u=0}^aq_{iujv}^{ij}x_it^u-\sum_{u=0}^a\mu_{juv}^jt^u,\quad
j\not=1,\; v=1,2,\ldots,b,
\end{align*}
and $\mu_{juv}=0$ for $j\notin I$. Beginning with $F_{jb}$ and proceeding recursively, we see that the above equations yield
\[
F_{jv}^j=-\sum_{\ontop{i\in I,}{i\not=j}}\sum_{u=0}^a p_{iujv}^{ij}x_it^u-\sum_{u=0}^a\lambda_{juv}^jt^u,\quad j\not=1,
\]
for appropriate $p_{iujv}$ and $\lambda_{juv}$, with $\lambda_{juv}=0$ for $j\notin I$. In a similar fashion, by computing $H(tx_2)-tH(\ov{x}_m)$, we obtain
\[
F_{1v}^1=-\sum_{\ontop{i\in I,}{i\not=1}}\sum_{u=0}^a p_{iu1v}^{i1}x_it^u-\sum_{u=0}^a\lambda_{1uv}^1t^u,
\]
where $\lambda_{1uv}=0$ if $1\notin I$. We have shown that all the $F_{jv}$'s are of the form \eqref{5S4}. Consequently, the same holds for all the $E_{iu}$'s.
\end{proof}

If $U$ is finite-dimensional, then $\cA = \fpu = \End(U)\cong M_n(\FF)$ and hence $\cA\otimes \cH \cong M_n(\cH)$. For this algebra, our result is not new; in fact, a stronger result \cite[Corollary 2.22]{FIbook} can be proved with somewhat lesser effort. The case of infinite- dimensional $U$ is the one we are interested in. Then $\cR=\fpu$ contains elements $t$ whose degree of algebraicity is arbitrary large, so we obtain the following: 

\begin{corollary} \label{MainC}
Let $\cA$ be any algebra such that $\fpu \subset \cA\subset \End(U)$. If $U$ is infinite-dimensional, then, for any $d\ge 1$, there exists $t\in\fpu$ such that $\cA\otimes \cH$ is a  $(t;d)$-free subset of $\cQ = \End_\cH(U\otimes \cH)$. In particular, $\cA\otimes \cH$ is a $d$-free subset of $\cQ$ for any $d\ge 1$.
\end{corollary}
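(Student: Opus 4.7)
The plan is to derive the corollary directly from Theorem \ref{MainT}, so the only real task is to produce, for every $d\ge 1$, an element $t\in\fpu$ whose degree of algebraicity over $\FF$ is at least $d$; then Theorem \ref{MainT} gives $(t;d)$-freeness of $\cA\otimes\cH$ and, as noted in Subsection \ref{ssPR} (citing \cite[Lemmas 3.12 and 3.13]{FIbook}), $(t;d)$-freeness automatically implies $d$-freeness.

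To construct such a $t$, I would exploit the fact that $U$ is infinite-dimensional. Pick linearly independent vectors $v_1,\ldots,v_d\in U$. Because $\Pi\subset U^\ast$ is total, a standard argument (essentially \cite[Lemma 5.7]{Bar}, already invoked after Corollary \ref{graded_Litoff}) produces $f_1,\ldots,f_d\in\Pi$ dual to $v_1,\ldots,v_d$, in the sense that $(f_i,v_j)=\delta_{ij}$. Set
\[
t \;\bydef\; \sum_{i=1}^{d-1} v_{i+1}\otimes f_i \;\in\;\fpu.
\]
A direct computation shows $t(v_j)=v_{j+1}$ for $1\le j\le d-1$ and $t$ vanishes on a complement of $\lspan{v_1,\ldots,v_d}$ that includes $v_d$, so $t$ acts as a single nilpotent Jordan block on $\lspan{v_1,\ldots,v_d}$ and as zero elsewhere. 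Hence $t^{d-1}(v_1)=v_d\ne 0$ while $t^d=0$, so the minimal polynomial of $t$ (viewed inside the unital algebra $\End(U)\subset\cQ$) is exactly $x^d$, giving $\deg(t)=d$.

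With this $t$ in hand, Theorem \ref{MainT} applies and tells us that $\cA\otimes\cH$ is a $(t;d)$-free subset of $\cQ$; the ``in particular'' statement about $d$-freeness is then immediate. I do not expect any real obstacle: once the right $t$ is written down, everything follows formally from Theorem \ref{MainT}. The only point requiring a moment's care is ensuring that the biorthogonal family $\{f_i\}\subset\Pi$ exists — but this is guaranteed by totality of $\Pi$, which is the whole point of working with the pair $(U,\Pi)$ rather than with $U$ alone.
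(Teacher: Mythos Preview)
Your proposal is correct and follows the same approach as the paper, which simply observes that $\fpu$ contains elements of arbitrarily large degree of algebraicity and then invokes Theorem \ref{MainT}; you just make this observation explicit by writing down a nilpotent Jordan block. One small wording slip: a complement of $\lspan{v_1,\ldots,v_d}$ cannot contain $v_d$ --- what you mean is that $t$ vanishes on $\bigcap_{i=1}^{d-1}\ker f_i$, which is a complement of $\lspan{v_1,\ldots,v_{d-1}}$ and does contain $v_d$; this does not affect the argument, since $t^{d-1}(v_1)=v_d\ne 0$ and $t^d=0$ follow directly from the formula for $t$.
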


\subsection{Lie homomorphisms}  

We are now ready to describe Lie homomorphisms of certain Lie subalgebras of $\cA\otimes \cH$ where $\cA$ is as in Theorem \ref{MainT} and $\cH$ is a unital {\em commutative} algebra. In the case of finite-dimensional $U$, this was done in \cite{BB}. Therefore, here we will assume that $U$ is infinite-dimensional. It should be mentioned, however, that our proofs work as long as the dimension of $U$ is big enough. Our method is similar to that in \cite{BB} and relies on the results on functional identities from \cite{FIbook}.  

We say that a map $\sigma$ from an associative algebra $\cA'$ to a unital associative algebra $\cA''$ is a {\em direct sum of a homomorphism and the negative of an antihomomorphism} if there exist central idempotent $e_1$ and $e_2$ in $A''$ with $e_1+e_2=1$ such that $x\mapsto e_1\sigma(x)$ is a homomorphism and $x\mapsto e_2\sigma(x)$ is the negative of an antihomomorphism. Maps with this property are clearly Lie homomorphisms; the nontrivial question is whether all Lie homomorphisms can be described in terms of such maps.

To state our results, we will need the following notation. We set $\cA^\sharp = \cA + \FF 1$; thus, $\cA^\sharp = \cA$ if $1\in \cA$.
If $X$ is a subset of an associative algebra, then by $\langle X\rangle$ we denote the subalgebra generated by $X$.

\begin{theorem}\label{T1}
Let $\cA$ be any algebra satisfying $\fpu \subset \cA\subset \End(U)$, where $U$ is infinite-dimensional, and let $\cL$ be a noncentral Lie ideal of $\cA$. If $\cH$ is a unital commutative associative algebra, $\cM$ is a Lie ideal of some associative algebra, and $\rho\colon\cM\to \cL\otimes \cH$ is a surjective Lie homomorphism, then there exist a direct sum of a homomorphism
and the negative of an antihomomorphism $\sigma\colon\langle \cM \rangle \to \cA^\sharp\otimes \cH$ and a linear map 
$\tau\colon\cM\to 1\otimes \cH$ such that $\tau([\cM,\cM])=0$ and $\rho(x) = \sigma(x) + \tau(x)$ for all $x\in \cM$.
\end{theorem}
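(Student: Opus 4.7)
The plan is to apply the general functional-identity machinery for Lie homomorphisms, in the spirit of \cite[Chapter 6]{FIbook} and the unital case treated in \cite{BB}. The essential new ingredient that makes this work in the present non-unital setting is Corollary \ref{MainC}: $\cA \otimes \cH$ is a $d$-free subset of $\cQ = \End_\cH(U \otimes \cH)$ for every $d \geq 1$. Working inside the ambient algebra $\cQ$ provides both an identity element and the center $\cC$ (identified with $\cH$ since $\cH$ is commutative), both of which are indispensable for extracting the ``direct sum of a homomorphism and the negative of an antihomomorphism.''

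I would first observe that $\cA$ is prime (since it contains the simple ideal $\fpu$), so by a standard Herstein-type argument the noncentral Lie ideal $\cL$ of $\brac{\cA}$ contains a nonzero associative ideal $\cK$ of $\cA$. Hence $\cK \otimes \cH \subseteq \cL \otimes \cH$ is a nonzero ideal of $\cA \otimes \cH$, giving enough ``room'' in the image of $\rho$ to apply the $d$-freeness results. Next, I would use the Lie identity $\rho([x,y]) = [\rho(x), \rho(y)]$, iterated on various commutator expressions (permissible because $\cM$ is a Lie ideal of its ambient associative algebra), to produce a functional identity in $\cQ$ of the form
\[
\sum_{i \in I} E_i^i\, x_i + \sum_{j \in J} x_j\, F_j^j \in \cC,
\]
whose coefficients are polynomial expressions in $\rho$ and its arguments. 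Corollary \ref{MainC} forces this identity to have only standard solutions; the usual recursive extraction then yields additive maps $\sigma_1$ (a homomorphism) and $\sigma_2$ (an antihomomorphism) from $\langle \cM \rangle$ to $\cA^\sharp \otimes \cH$, together with a central-valued map $\tau \colon \cM \to 1 \otimes \cH$, such that $\rho(x) = \sigma_1(x) - \sigma_2(x) + \tau(x)$ for all $x \in \cM$. The split $\sigma = e_1 \sigma_1 - e_2 \sigma_2$ is produced by a pair of orthogonal central idempotents $e_1, e_2 \in 1 \otimes \cH$ arising from the standard way to separate a sum of a homomorphism and the negative of an antihomomorphism. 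Finally, applying $\rho([x,y]) = [\rho(x),\rho(y)]$ while using that $\sigma$ is itself a Lie homomorphism (because $e_1 + e_2 = 1$ and $e_1 e_2 = 0$) and that $\tau$ is central gives $\tau([\cM, \cM]) = 0$.

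The main obstacle is the non-unitality of both $\cA$ and $\cM$, which prevents a direct appeal to the results of \cite{BB}. We overcome this by carrying out the entire analysis inside $\cQ$: the functional identities can be formulated and solved there, the center $\cC$ supplies the scalars, and the unit of $\cQ$ is responsible for the idempotents $e_1, e_2$ landing in $1 \otimes \cH \subset \cA^\sharp \otimes \cH$ --- which is precisely why the target of $\sigma$ must be $\cA^\sharp \otimes \cH$ rather than $\cA \otimes \cH$. A secondary technical matter is the consistent multiplicative extension of $\sigma_1$ and $\sigma_2$ from $\cM$ to the subalgebra $\langle \cM \rangle$, which reduces to checking an auxiliary cocycle-type identity and is handled, once more, by Corollary \ref{MainC}.
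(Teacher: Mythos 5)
Your high-level strategy is in the right spirit---work inside $\cQ$, use the $d$-freeness coming from Corollary \ref{MainC}, and conclude that $\rho$ is essentially a homomorphism or antihomomorphism up to a central map---but the proposal both overshoots and undershoots what the paper actually does. The paper's proof does not reconstruct the functional-identity analysis from scratch as you sketch: after establishing $d$-freeness of the relevant subset, it passes to the quotient Lie ring $\overline{\cQ}=\brac{\cQ}/\cC$, defines $\alpha(x)=\overline{\rho(x)}$, and invokes the Lie-map structure theorem \cite[Theorem 6.19]{FIbook} as a black box to produce $\sigma$ directly on $\langle\cM\rangle$. The map $\tau=\sigma-\rho$ then lands in $\cC$ automatically, so the ``direct sum of a homomorphism and the negative of an antihomomorphism'' and the idempotent decomposition come for free from the cited theorem rather than from a by-hand extraction of standard solutions. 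The step you describe as a ``secondary technical matter'' (extending $\sigma_1,\sigma_2$ multiplicatively to $\langle\cM\rangle$ via a cocycle identity) is in fact the hard core of \cite[Theorem 6.19]{FIbook}, and your sketch does not supply it.

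The more substantive gap is the $d$-freeness step itself. Corollary \ref{MainC} gives $d$-freeness of $\cA\otimes\cH$, but what the argument needs is $d$-freeness of the \emph{image of $\rho$}, namely $\cL\otimes\cH$ (or at least of the subset to which \cite[Theorem 6.19]{FIbook} is applied). Your argument that $\cL$ contains a nonzero associative ideal $\cK$ of $\cA$ and therefore $\cK\otimes\cH$ ``gives enough room'' does not by itself establish $d$-freeness of $\cL\otimes\cH$; you would still need a transfer result, and a detour through associative ideals is unnecessary. The paper goes directly: since $\cH$ is commutative, $\cL\otimes\cH$ is a Lie ideal of $\cA\otimes\cH$, and \cite[Corollary 3.18]{FIbook} transfers $(t;d)$-freeness from a set to a Lie ideal containing $t$ (here $t\in\cL$ is chosen with $\deg(t)\ge d$, which exists by \cite[Lemma C.5]{FIbook} because $U$ is infinite-dimensional). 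You should replace your Herstein-type detour by this direct transfer, and then reduce to \cite[Theorem 6.19]{FIbook} rather than redoing the functional-identity extraction by hand.
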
 
 
\begin{proof}
Given any $d\ge 1$, we can find $t\in\cL$ with $\deg(t)\ge d$ (see \cite[Lemma C.5]{FIbook}). Since $\cH$ is commutative, $\cL\otimes \cH$ is a Lie ideal of $\cA\otimes\cH$. Therefore, we infer from Theorem \ref{MainT} together with \cite[Corollary 3.18]{FIbook} that $\cL\otimes\cH$ is a $d$-free subset of $\cQ$ for every $d\ge 1$.
 
Since $\cH$ is commutative, the center $\cC$ of $\cQ$ equals $1\otimes\cH$. Let $\overline{\cQ}$ be the quotient Lie algebra $\brac{\cQ}/\cC$. For any $q\in \cQ$, we set $\overline{q} = q + \cC\in \overline{\cQ}$. Define $\alpha\colon\cM\to\overline{\cQ}$ by $\alpha(x) = \overline{\rho(x)}$. Then $\alpha$ is a Lie homomorphism whose range is equal to $\ov{\cL\otimes \cH}$. Since  $\cL\otimes \cH$ is $d$-free for every $d$, in particular for $d=7$, we may use \cite[Theorem 6.19]{FIbook} to conclude that there exists a direct sum of a homomorphism and the negative of an antihomomorphism $\sigma\colon\langle \cM \rangle \to \cQ$ such that $\alpha(x) = \overline{\sigma(x)}$ for all $x\in \cM$, i.e., $\overline{\rho(x)} =  \overline{\sigma(x)}$ for all $x\in \cM$. Hence $\tau(x) \bydef \sigma(x)-\rho(x)$ lies in $\cC$. Therefore, $\sigma(\cM)\subset (\cL + \FF 1)\otimes \cH$. This clearly implies that $\sigma$ maps $\langle \cM\rangle$ into $\cA^\sharp\otimes \cH$. Finally, both $\rho$ and $\sigma$ are Lie homomorphisms, so 
\begin{align*}
\tau([x,y])& = \sigma([x,y]) - \rho([x,y]) = [\sigma(x),\sigma(y)] - [\rho(x),\rho(y)]\\
& = [\rho(x)+\tau(x),\rho(y)+\tau(y)] - [\rho(x),\rho(y)] = 0
\end{align*}
for all $x,y\in \cM$. 
\end{proof}

If $\cA$ is any associative algebra with an involution $\vphi$, then by $\sks(\cA,\vphi)$ we denote the Lie algebra of skew-symmetric elements in $\cA$: 
\[
\sks(\cA,\vphi)=\{a\in\cA\;|\;\vphi(a) = -a\}.
\]

\begin{theorem}\label{T2}
Let $\cA$ be any algebra satisfying $\fpu \subset \cA\subset \End(U)$, where $U$ is infinite-dimensional and the characteristic of the ground field is different from $2$. Assume that $\cA$ has an involution $\vphi$ and let $\cL$ be a noncentral Lie ideal of $\sks(\cA,\vphi)$. If $\cH$ is a unital commutative associative algebra, $\cU$ is an associative algebra with involution $*$, $\cM$ is a Lie ideal of $\sks(\cU,*)$, and $\rho\colon\cM\to\cL\otimes \cH$ is a surjective Lie homomorphism, then there exist a homomorphism
$\sigma\colon\langle \cM \rangle \to \cA^\sharp\otimes \cH$ and a linear map $\tau\colon \cM\to 1\otimes \cH$ such that $\tau([\cM,\cM])=0$ and $\rho(x) = \sigma(x) + \tau(x)$ for all $x\in \cM$.
\end{theorem}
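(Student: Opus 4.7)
The plan is to mimic the proof of Theorem \ref{T1} as closely as possible, with two adjustments specific to the skew-symmetric setting: first, we must upgrade the $d$-freeness statement so that the \emph{target} $\cL\otimes\cH$ is a $d$-free subset of $\cQ$; second, when we invoke the Herstein-type theorem for the quotient Lie homomorphism $\alpha\colon\cM\to\overline{\cQ}$, we must use the version suited to Lie ideals of skew-symmetric elements, whose conclusion gives a plain homomorphism rather than the direct sum of a homomorphism and the negative of an antihomomorphism.

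First I would establish that $\cL\otimes\cH$ is a $d$-free subset of $\cQ$ for every $d\ge 1$. Since $\chr\FF\neq 2$ and $\cH$ is commutative, the involution $\vphi\otimes\id$ on $\cA\otimes\cH$ satisfies $\sks(\cA\otimes\cH,\vphi\otimes\id)=\sks(\cA,\vphi)\otimes\cH$, and $\cL\otimes\cH$ is a Lie ideal of this skew-symmetric Lie algebra. By Corollary \ref{MainC}, $\cA\otimes\cH$ is $d$-free in $\cQ$ for every $d$. Combining this with the transfer results in \cite[Ch.~5]{FIbook} on $d$-freeness passing to (large) sets of skew-symmetric elements and from there to noncentral Lie ideals (e.g., analogues of Corollary 3.18 used in Theorem \ref{T1}), we conclude that $\cL\otimes\cH$ itself is a $d$-free subset of $\cQ$ for every $d\ge 1$; the hypothesis that $U$ is infinite-dimensional ensures that the degree of algebraicity needed for each $d$ can be realized inside $\cL$, using \cite[Lemma C.5]{FIbook}.

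Next I would form the quotient Lie algebra $\overline{\cQ}=\brac{\cQ}/\cC$, where $\cC=1\otimes\cH$, and define the Lie homomorphism $\alpha\colon\cM\to\overline{\cQ}$ by $\alpha(x)=\overline{\rho(x)}$, whose image lies in $\overline{\cL\otimes\cH}$. Because $\cM$ is a Lie ideal of $\sks(\cU,*)$ and the target $\cL\otimes\cH$ is $7$-free (say), I would invoke the Herstein-type theorem for Lie maps on skew elements, that is, the skew analogue of \cite[Theorem 6.19]{FIbook} (see e.g.\ \cite[Theorems 6.11 and 6.21]{FIbook}). Its conclusion yields a \emph{homomorphism} $\sigma\colon\langle\cM\rangle\to\cQ$ (no antihomomorphism summand, precisely because the negative of an antihomomorphism becomes a homomorphism when pre-composed with the ambient involution and restricted to skew elements) such that $\overline{\rho(x)}=\overline{\sigma(x)}$ for all $x\in\cM$.

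Finally, setting $\tau(x)\bydef\sigma(x)-\rho(x)$ gives $\tau(\cM)\subset\cC=1\otimes\cH$, and exactly as in Theorem \ref{T1} this forces $\sigma(\cM)\subset(\cL+\FF 1)\otimes\cH$, hence $\sigma(\langle\cM\rangle)\subset\cA^\sharp\otimes\cH$; the identity
\[
\tau([x,y])=[\sigma(x),\sigma(y)]-[\rho(x),\rho(y)]=[\rho(x)+\tau(x),\rho(y)+\tau(y)]-[\rho(x),\rho(y)]=0
\]
then shows $\tau([\cM,\cM])=0$. The main obstacle is the first step: verifying that the $d$-freeness established in Corollary \ref{MainC} for the ambient $\cA\otimes\cH$ genuinely transfers to the noncentral Lie ideal $\cL\otimes\cH$ of skew-symmetric elements. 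This requires that the transfer machinery of \cite[Chs.~3 and~5]{FIbook}, originally formulated for prime or simple rings, remain applicable to our $\cS=\cR\otimes\cH$ with $\cR=\fpu$; once this is in place, the rest of the argument is a purely formal adaptation of the proof of Theorem \ref{T1}.
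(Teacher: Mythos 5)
Your plan is essentially correct and matches the paper's proof: both establish $d$-freeness of $\cL\otimes\cH$ by pushing the $(t;\cdot)$-freeness of $\cA\otimes\cH$ through the skew-symmetric elements and then to the Lie ideal, then apply the skew-symmetric version of the Herstein-type Lie map theorem to get a homomorphism $\sigma$, and finish exactly as in Theorem~\ref{T1}. The one place you are imprecise is the transfer chain: the paper picks $t\in\cL$ with $\deg(t)\ge 2d+3$ (via \cite[Lemma C.6]{FIbook}, not C.5, so that $t$ lies in the noncentral Lie ideal of skew-symmetric elements), applies Theorem~\ref{MainT} to get that $\cA\otimes\cH$ is $(t;2d+3)$-free, then uses \cite[Theorem 3.28]{FIbook} to drop to $(t;d+1)$-freeness of $\sks(\cA\otimes\cH,\vphi)$ and \cite[Corollary 3.18]{FIbook} to reach $(t;d)$-freeness of $\cL\otimes\cH$; the Herstein-type theorem invoked is \cite[Theorem 6.18]{FIbook} with $d=9$ rather than $d=7$, though since your argument yields $d$-freeness for all $d$ this numerical slip is harmless.
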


\begin{proof}
Extend $\vphi$ to $\cA\otimes \cH$ by setting $\vphi(x\otimes h) = \vphi(x)\otimes h$. Then $\cA\otimes \cH$ is a Lie ideal of $\sks(\cA\otimes \cH,\vphi) = \sks(\cA,\vphi)\otimes \cH$.
 
Given any $d\ge 1$, we can find $t\in \cL$ with $\deg(t)\ge 2d+3$ (see \cite[Lemma C.6]{FIbook}). Theorem \ref{MainT} tells us that $\cA\otimes \cH$ is a $(t;2d+3)$-free subset of $\cQ$. Now we infer from \cite[Theorem 3.28]{FIbook} that $\sks(\cA\otimes \cH,\vphi)$ is a $(t;d+1)$-free subset of $\cQ$. Using \cite[Corollary 3.18]{FIbook}, we conclude that $\cL\otimes \cH$ is a $(t;d)$-free subset of $\cQ$ and hence $d$-free. 

Let $\overline{\cQ}$ and $\overline{q}$ have the same meaning as in the proof of Theorem \ref{T1}.
Define $\alpha\colon\cM\to \overline{\cQ}$ by $\alpha(x) = \overline{\rho(x)}$. Since $\cL\otimes \cH$ is $d$-free for every $d$, in particular for $d=9$, and $\alpha(\cM) = \overline{\cL\otimes \cH}$, we can use \cite[Theorem 6.18]{FIbook} to conclude that there is a homomorphism $\sigma\colon\langle \cM \rangle \to \cQ$ such that $\alpha(x) = \overline{\sigma(x)}$ for all $x\in \cM$. Then the argument from the end of the proof of Theorem \ref{T1} shows that $\tau= \rho - \sigma$ is a map from $\cM$ into $1\otimes \cH$ vanishing on $[\cM,\cM]$ and that the range of $\sigma$ lies in $\cA^\sharp\otimes\cH$.
\end{proof}


\section{Gradings on Lie algebras of finitary linear transformations}\label{sLie}

Throughout this section, $\FF$ is an algebraically closed field of characteristic different from $2$, and $G$ is an abelian group. Our goal  is to classify $G$-gradings on the infinite-dimensional simple Lie algebras $\fsl(U,\Pi)$, $\fso(U,\Phi)$ and $\fsp(U,\Phi)$ of finitary linear transformation over $\FF$. We are going to transfer the classification results for the associative algebras $\fpu$ in Section \ref{sSTGPA} to the above Lie algebras using the results on Lie homomorphisms in Section \ref{sFILH}. 

We denote by $\cH$ the group algebra $\FF G$, which is a commutative and cocommutative Hopf algebra. A $G$-grading on an algebra $\cU$ is equivalent to a (right) $\cH$-comodule structure $\rho\colon\cU\to\cU\ot\cH$ that is also a homomorphism of algebras. Indeed, given a $G$-grading on $\cU$, we set $\rho(x)=x\ot g$ for all $x\in\cL_g$ and extend by linearity. Conversely, given $\rho$, we obtain a $G$-grading on $\cU$ by setting $\cU_g=\{x\in\cU\;|\;\rho(x)=x\ot g\}$. We can extend $\rho$ to a homomorphism $\cU\ot\cH\to\cU\ot\cH$ by setting $\rho(x\ot h)=\rho(x)(1\ot h)$. It is easy to see that this extended homomorphism is surjective.

\subsection{Special linear Lie algebras}\label{ssSLA}

Let $\cL=\fsl(U,\Pi)$ where $U$ is infinite-di\-men\-sional and let $\cR=\fpu$. Suppose $\cL$ is given a $G$-grading and let $\rho\colon\cL\ot\cH\to\cL\ot\cH$ be the Lie homomorphism obtained by extending the comodule structure map. Then $\cL=[\cR,\cR]$ is a noncentral Lie ideal of $\cR$ and $\cM\bydef\cL\ot\cH$ is a Lie ideal of $\cR\ot\cH$. Moreover, $\langle \cL\rangle=\cR$ implies $\langle \cM\rangle=\cR\ot\cH$, and $[\cL,\cL]=\cL$ implies $[\cM,\cM]=\cM$. Applying Theorem \ref{T1} and observing that $\tau=0$, we conclude that $\rho$ extends to a map $\rho'\colon\cR\ot\cH\to\cR\ot\cH$ which is a sum of a homomorphism and the negative of an antihomomorphism. Thus, there are central idempotents $e_1$ and $e_2$ in $\cH$ (which can be identified with the center of $\cR^\sharp\ot\cH$) with $e_1+e_2=1$ such that the composition of $\rho'$ and the projection $\cR\ot\cH\to\cR\ot e_1\cH$ is a homomorphism, while the composition of $\rho'$ and the projection $\cR\ot\cH\to\cR\ot e_2\cH$ is the negative of an antihomomorphism. 

If $\psi$ is any automorphism of $\cL$, it can be extended to a map $\psi'\colon\cR\to\cR$ that is a homomorphism or the negative of an antihomomorphism (use \cite[Theorem 6.19]{FIbook} or our Theorem \ref{T1} with $\cH=\FF$). Clearly, $\psi'$ is surjective. Since $\cR$ is simple, we conclude that $\psi'$ is an automorphism or the negative of an antiautomorphism. We claim that $\psi$ cannot admit extensions of both types. Indeed, assume that $\psi'$ is an automorphism of $\cR$ and $\psi''$ is the negative of an antiautomorphism of $\cR$ such that both restrict to $\psi$. Let $\sigma=(\psi')^{-1}\psi''$. Then $\sigma$ is the negative of an antiautomorphism of $\cR$ that restricts to identity on $\cL$. Hence, for any $x\in\cL$ and $r\in\cR$, we have $[x,r]=\sigma([x,r])=[\sigma(x),\sigma(r)]=[x,\sigma(r)]$. It follows that $r-\sigma(r)$ belongs to the center of $\cR$, which is zero, so $\sigma$ is the identity map --- a contradiction. We have shown that, for any automorphism $\psi$ of $\cL$, there is a unique extension $\psi'\colon\cR\to\cR$ that is either an antiautomorphism or the negative of an antiautomorphism.

Now any character $\chi\in\wh{G}$ acts as an automorphism $\psi$ of $\cL$ defined by $\psi(x)=\chi(g)x$ for all $x\in\cL_g$. Denote this $\psi$ by $\eta(\chi)$, i.e., $\eta(\chi)=(\id_\cR\ot\chi)\rho$. Clearly, $\eta$ is a homomorphism from $\wh{G}$ to $\Aut(\cL)$. Define $\eta'(\chi)=\eta(\chi)'$. It follows from the uniqueness of extension that $\eta'$ is a homomorphism from $\wh{G}$ to the group $\overline{\Aut}(\cR)$ consisting of all automorphisms and the negatives of antiautomorphisms of $\cR$. We can regard $\chi$ as a homomorphism of algebras $\cH\to\FF$ and define $\eta''(\chi)=(\id_\cR\ot\chi)\rho'$. Then $\eta''(\chi)$ is a map $\cR\to\cR$ that restricts to $\eta(\chi)$ on $\cL$. Clearly, $\chi(e_1)$ is either $1$ or $0$, and then $\chi(e_2)$ is either $0$ or $1$, respectively. An easy calculation shows that if $\chi(e_1)=1$, then $\eta''(\chi)$ is a homomorphism, and if $\chi(e_1)=0$, then $\eta''(\chi)$ is the negative of an antihomomorphism. We conclude that $\eta''(\chi)=\eta'(\chi)$. Setting $h=e_1-e_2$, we see that, for any $\chi\in\wh{G}$,
\begin{equation}\label{eq:characterize_h}
\chi(h)=\left\{\begin{array}{ll}
\phantom{+}1 & \mbox{if}\quad \eta'(\chi)\in\Aut(\cR);\\
-1 & \mbox{if}\quad \eta'(\chi)\notin\Aut(\cR).
\end{array}\right.
\end{equation}
It is well known that any idempotent of $\cH=\FF G$ is contained in $\FF K$ for some finite subgroup $K\subset G$ such that $\chr{\FF}$ does not divide the order of $K$. Pick $K$ such that $e_1\in\FF G_0$, then also $h\in\FF K$. Since any character of $K$ extends to a character of $G$, equation \eqref{eq:characterize_h} implies that $(\chi_1\chi_2)(h)=\chi_1(h)\chi_2(h)$ for all $\chi_1,\chi_2\in\wh{K}$, so $h$ is a group-like element of $(\FF\wh{K})^*=\FF K$. It follows that $h\in K$. Clearly, the order of $h$ is at most $2$. Since $\chr{\FF}\ne 2$, the characterization given by \eqref{eq:characterize_h} shows that the element $h$ is uniquely determined by the given $G$-grading on $\cL$. It follows that $e_1$, $e_2$ and $\rho'$ are also uniquely determined.

If $h$ has order $1$, then $e_1=1$ and $e_2=0$, so the map $\rho'\colon\cR\to\cR\ot\cH$ is a homomorphism of associative algebras. Since $\cL$ generates $\cR$, it immediately follows that $\rho'$ is a comodule structure. This makes $\cR$ a $G$-graded algebra such that the given grading on $\cL$ is just the restriction of the grading on $\cR$, i.e., $\cL_g=\cR_g\cap\cL$ for all $g\in G$. Such gradings on $\cL$ will be referred to as {\em grading of Type I}. We can use Corollary \ref{lfd_abelian_G} to obtain all such gradings.

If $h$ has order $2$, then both $e_1$ and $e_2$ are nontrivial, so $\rho'$ is not a homomorphism of associative algebras, which means that the algebra $\cR$ does not admit a $G$-grading that would restrict to the given grading on $\cL$. Such gradings on $\cL$ will be referred to as {\em grading of Type II (with distinguished element $h$)}. Let $\bG=G/\langle h\rangle$ and $\overline{\cH}=\FF\bG$. Denote the quotient map $G\to\bG$ by $\pi$ and extend it to a homomorphism of Hopf algebras $\cH\to\overline{\cH}$. Since $\pi(e_2)=0$, the map $\overline{\rho}\bydef(\id_\cR\ot\pi)\rho$ is a homomorphism of associative algebras, so $\overline{\rho}\colon\cR\to\cR\ot\overline{\cH}$ is a comodule structure, which makes $\cR$ a $\bG$-graded algebra. The restriction of this grading to $\cL$ is the coarsening of the given $G$-grading induced by $\pi\colon G\to\bG$, i.e., $\cR_\bg\cap\cL=\cL_g\oplus\cL_{gh}$ for all $g\in G$, where $\bg=\pi(g)$. The original grading can be recovered as follows.

Fix a character $\chi\in\wh{G}$ satisfying $\chi(h)=-1$ and let $\psi=\eta(\chi)$. Then we get
\[
\cL_g=\{x\in\cL_\bg\;|\;\psi(x)=\chi(g)x\}.
\]
Indeed, by definition of $\psi$, $\cL_g$ is contained in the right-hand side. Conversely, suppose $x\in\cL_\bg$ satisfies $\psi(x)=\chi(g)x$. Write $x=y+z$ where $y\in\cL_g$ and $y\in\cL_{gh}$. Then we have $\chi(g)(y+z)=\psi(x)=\chi(g)(y-z)$. It follows that  $z=0$ and so $x\in\cL_g$. 

By \eqref{eq:characterize_h}, the extension $\psi'$ of $\psi$ is not an automorphism, so $\psi'=-\vphi$ where $\vphi$ is an antiautomorphism of $\cR$. Since $\psi$ leaves the components $\cL_\bg$ invariant and $\cL$ generates $\cR$, it follows that $\vphi$ leaves the components $\cR_\bg$ invariant. Since $\vphi^2=(\psi')^2=\eta'(\chi^2)$ and $\chi^2$ can be regarded as a character of $\bG$, we obtain $\vphi^2(r)=\chi^2(\bg)r$ for all $r\in\cR_\bg$. Set
\begin{equation}\label{eqant}
\cR_g=\{r\in\cR_{\bg}\;|\;\vphi(r)=-\chi(g)r\}.
\end{equation}
Then $\cR_\bg=\cR_g\oplus\cR_{gh}$ and hence $\cR=\bigoplus_{g\in G}\cR_g$. Since $\psi'$ is not an automorphism of $\cR$, this is not a $G$-grading on the associative algebra $\cR$. It is, however, a $G$-grading on the Lie algebra $\brac{\cR}$ (corresponding to the comodule structure $\rho'$). The $\bG$-grading on $\cR$ and the antiautomorphism $\vphi$ completely determine the $G$-grading on $\cL$:
\begin{equation}\label{eq:grading_type_II}
\cL_g=\{x\in\cR_\bg\cap\cL\;|\;\vphi(x)=-\chi(g)x\}.
\end{equation}

Conversely, suppose we have a $\bG$-grading on $\cR$ and an antiautomorphism $\vphi$ of the $\bG$-graded algebra $\cR$ satisfying the following {\em compatibility condition}:
\begin{equation}\label{eq:chi_compatibility}
\vphi^2(r)=\chi^2(\bg)r\quad\mbox{for all}\quad r\in\cR_{\bg},\, \bg\in\bG. 
\end{equation}
Since $-\vphi$ is an automorphism of $\brac{\cR}$, equation \eqref{eqant} gives a $G$-grading on $\brac{\cR}$ that refines the given $\bG$-grading. Since $\cL=[\cR,\cR]$ is a $G$-graded subalgebra of $\brac{\cR}$, we see that \eqref{eq:grading_type_II} defines a Type II grading on $\cL$ with distinguished element $h$. Note that the compatibility condition implies that $\vphi$ acts as involution on the identity component of the $\bG$-grading of $\cR$, so Theorem \ref{lfd_2} tells us that $(\cR,\vphi)$ is isomorphic to some $\frF(\bG,\cD,\tV,\tB,\bg_0)$.

\begin{proposition}\label{lfd_chi}
The graded algebra $\cR=\frF(\bG,\cD,\tV,\tB,\bg_0)$ with its antiautomorphism $\vphi$ satisfies the compatibility condition \eqref{eq:chi_compatibility} if and only if $\pi\colon G\to\bG$ splits over the support $\bT$ of $\cD$ and there exists $\mu_0\in\FF^\times$ such that $\tB$ satisfies the symmetry condition \eqref{eq:def_mu} where $\mu_A$, for all $A\in\bG/\bT$, is given by
\begin{equation}\label{eq:mu_for_chi}
\mu_A=\left\{
\begin{array}{ll}
\mu_0\chi^{-2}(A)\beta(\tau(A)) & \mbox{if}\quad \bg_0 A^2=\bT;\\
\mu_0\chi^{-2}(A) & \mbox{if}\quad \bg_0 A^2\ne \bT;
\end{array}\right.
\end{equation}
where we regard $\chi^2$ as a character of $\bG/\bT$ (since $\chi^2$ is trivial on $\bT$).
\end{proposition}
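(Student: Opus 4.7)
The plan is to translate \eqref{eq:chi_compatibility} into a condition on the scalars $\lambda_A\in\FF^\times$ by which $Q$ acts on the isotypic components $V_A$ of $V$, and then pass to the $\mu_A$ via \eqref{eq:lambda_and_mu}. Using \eqref{eq:vphi_square}, the compatibility becomes $Q^{-1}rQ=\chi^2(\bg)r$ for every $r\in\cR_{\bg}$. Since $Q$ acts as the scalar $\lambda_A$ on $V_A$ and any $r\in\cR_{\bg}$ decomposes into pieces mapping $V_B$ into $V_{\bg B}$, a direct calculation shows this is equivalent to the scalar identity
\[
\lambda_B=\chi^2(\bg)\,\lambda_{\bg B}
\]
for every $B\in\bG/\bT$ and every $\bg\in\bG$ with $V_B$ and $\cR_{\bg}$ nonzero.

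Specializing to $\bg\in\bT$, for which $\bg B=B$, immediately forces $\chi^2|_{\bT}=1$. I would then identify this with the splitting of $\pi$ over $\bT$: if $\chi^2|_{\bT}=1$ then $\chi|_{\pi^{-1}(\bT)}$ is a $\{\pm 1\}$-valued character sending $h$ to $-1$, and its kernel is a subgroup of $\pi^{-1}(\bT)$ complementary to $\langle h\rangle$, providing a section $\bT\to G$; conversely, any section yields such a $\{\pm 1\}$-valued character on $\pi^{-1}(\bT)$ nontrivial on $\langle h\rangle$, which extends to a character of $G$ because characters of subgroups of abelian groups always extend. Once $\chi^2$ descends to $\bG/\bT$, rearranging $\lambda_B=\chi^2(AB^{-1})\lambda_A$ (for $A=\bg B$) shows that $\chi^2(A)\lambda_A$ is independent of $A$, hence equal to a common constant $\mu_0\in\FF^\times$. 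Substituting $\lambda_A=\mu_0\chi^{-2}(A)$ into \eqref{eq:lambda_and_mu} and using $\beta(\tau(A))^2=1$ recovers exactly the formula \eqref{eq:mu_for_chi}.

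For the converse, starting from the splitting (so that $\chi^2$ descends to $\bG/\bT$) together with the symmetry \eqref{eq:def_mu} with $\mu_A$ given by \eqref{eq:mu_for_chi}, I would define $Q$ on $V_A$ to be the scalar $\mu_0\chi^{-2}(A)$ and check, by reversing the previous computations, that the antiautomorphism $\vphi$ determined via Theorem \ref{antiauto_graded_simple} by the weakly Hermitian form $B$ with this operator $Q$ satisfies \eqref{eq:chi_compatibility}. The main technical point will be the interplay between the fixed character $\chi$ and the splitting condition in the equivalence ``$\chi^2|_{\bT}=1 \Leftrightarrow \pi$ splits over $\bT$'': the necessity direction uses that our fixed $\chi$ already provides a splitting, while the sufficiency direction uses that, given a splitting, the expressions involving $\chi^2$ on cosets in $\bG/\bT$ are unambiguous.
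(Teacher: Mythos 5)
Your calculation is essentially the paper's, stated concretely rather than abstractly: where the paper invokes the uniqueness clause of Theorem \ref{isomorphism_graded_simple} to compare the isomorphism of triples giving $\vphi^2$ (namely $(\id,Q^{-1},Q^*)$) with the one giving the action of $\chi^2$, concluding that they coincide precisely when they differ by a scalar $\mu_0\in\cD_e=\FF^\times$, you arrive at the same two conditions $\chi^2|_{\bT}=1$ and $\lambda_A\chi^2(A)=\mu_0$ by working out $Q^{-1}rQ=\chi^2(\bg)r$ directly on the isotypic components $V_A$. The subsequent translation to $\mu_A$ via \eqref{eq:lambda_and_mu} together with $\beta(\tau(A))^2=1$ is then identical.

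The one place where your writeup has a genuine flaw is the equivalence ``$\chi^2|_{\bT}=1\Leftrightarrow\pi$ splits over $\bT$.'' The forward direction (the kernel of $\chi|_{\pi^{-1}(\bT)}$ furnishes a complement to $\langle h\rangle$) is fine, but your justification of the reverse direction --- that ``given a splitting, the expressions involving $\chi^2$ on cosets in $\bG/\bT$ are unambiguous'' --- is circular, since that unambiguity is exactly the statement $\chi^2|_{\bT}=1$ to be proved; and the remark about extending a $\pm1$-valued character from $\pi^{-1}(\bT)$ to $G$ is beside the point, since $\chi$ is fixed in advance and may not be replaced. What is actually needed is: $\bT$ is an elementary abelian $2$-group (because $\cD$ admits an involution), so a splitting makes $H\bydef\pi^{-1}(\bT)\cong\bT\times\langle h\rangle$ an elementary abelian $2$-group, whence $\chi(t)^2=\chi(t^2)=1$ for all $t\in H$, giving $\chi^2|_{\bT}=1$. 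This is precisely the equivalence the paper asserts without elaboration. With this correction your argument matches the paper's.
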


\begin{proof}
Equation \eqref{eq:vphi_square} tells us that, in terms of Theorem \ref{isomorphism_graded_simple}, $\vphi^2$ is determined by the isomorphism $(\id,Q^{-1},Q^*)$ from $(\cD,V,W)$ to itself. On the other hand, the action of $\chi^2$ on $\cR$ is determined by the isomorphism $(\psi_0,\psi_1,\psi_2)$ from $(\cD,V,W)$ to itself, where $\psi_0$, $\psi_1$ and $\psi_2$ are the actions of $\chi^2$ on $\cD$, $V$ and $W$, respectively, arising from their $\bG$-gradings. We conclude that \eqref{eq:chi_compatibility} holds if and only if $\psi_0=\id$ and $\psi_1=\mu_0 Q^{-1}$ for some $\mu_0\in\FF^\times$. The first condition means that $\chi^2(\bt)=1$ for all $\bt\in\bT$, which is equivalent to $H\bydef\pi^{-1}(\bT)$ being isomorphic to $\bT\times\langle h\rangle$. The second condition means that $\lambda_{A}\chi^2(A)=\mu_0$ for all $A$, which is equivalent to \eqref{eq:mu_for_chi} in view of \eqref{eq:lambda_and_mu}.
\end{proof}

We note that, since $\mu_A\mu_{\bg_0^{-1}A^{-1}}=1$ for all $A$, the scalar $\mu_0$ satisfies
\[
\mu_0^2=\chi^{-2}(\bg_0)
\]
and hence can take only two values.

To state the classification of $G$-gradings on the Lie algebras $\fsl(U,\Pi)$, we introduce the model $G$-graded algebras $\frA^\mathrm{(I)}(G,T,\beta,\tV,\tW)$ and $\frA^{\mathrm{(II)}}(G,H,h,\beta,\tV,\tB,\bg_0)$. 

Let $T\subset G$ be a finite subgroup with a nondegenerate alternating bicharacter $\beta$. The Lie algebra $\frA^\mathrm{(I)}(G,T,\beta,\tV,\tW)$ is just the commutator subalgebra of the $G$-graded associative algebra $\cR=\mathfrak{F}(G,T,\beta,\tV,\tW)$ (introduced before Corollary \ref{lfd_abelian_G}). By Theorem \ref{lfd}, it is isomorphic to $\fsl(U,\Pi)$ where $U=\tV^\ell$, $\Pi=\tW^\ell$, $\ell^2=|T|$, and the bilinear form $\Pi\times U\to\FF$ is given by \eqref{eq:two_bilinear_forms}.

Let $H\subset G$ be a finite elementary $2$-subgroup, $h\ne e$ an element of $H$, and $\beta$ a nondegenerate alternating bicharacter on $H/\langle h\rangle$. Fix a charcter $\chi\in\wh{G}$ with $\chi(h)=-1$. Let $\bG=G/\langle h\rangle$ and $\bT=H/\langle h\rangle$. The Lie algebra $\frA^{\mathrm{(II)}}(G,H,h,\beta,\tV,\tB,\bg_0)$ is the commutator subalgebra of the Lie algebra $\brac{\cR}$ with a $G$-grading defined by refining the $\bG$-grading as in \eqref{eqant} where $\cR$ is the $\bG$-graded associative algebra $\mathfrak{F}(\bG,\bT,\beta,\tV,\tB,\bg_0)$ with antiautomorphism $\vphi$ (introduced before Theorem \ref{lfd_2}) and the bilinear form $\tB$ on $\tV$ satisfies the symmetry condition \eqref{eq:def_mu} with $\mu_A$ given by \eqref{eq:mu_for_chi} for some $\mu_0\in\FF^\times$. By Theorem \ref{lfd_2}, $\frA^{\mathrm{(II)}}(G,H,h,\beta,\tV,\tB,\bg_0)$ is isomorphic to $\fsl_U(U)$ where $U=\tV^\ell$, $\ell^2=|H|/2$, and the bilinear form $U\times U\to\FF$ is given by \eqref{eq:two_bilinear_forms_2}.

The definition of $\frA^{\mathrm{(II)}}(G,H,h,\beta,\tV,\tB,\bg_0)$ depends on the choice of $\chi$. However, regardless of this choice, we obtain the same collection of graded algebras as $(\tV,\tB)$ ranges over all possibilities allowed by the chosen $\chi$. We assume that a choice of $\chi$ is fixed for any element $h\in G$ of order $2$.

Now we can state our main result about gradings on special Lie algebras of finitary linear operators on an infinite-dimensional vector space.

\begin{theorem}\label{tmsla}
Let $G$ be an abelian group and let $\FF$ be an algebraically closed field, $\chr{\FF}\ne 2$. If a special Lie algebra $\cL$ of finitary linear operators on an infinite-dimensional vector space over $\FF$ is given a $G$-grading, then $\cL$ is isomorphic as a graded algebra to some $\frA^\mathrm{(I)}(G,T,\beta,\tV,\tW)$ or $\frA^{\mathrm{(II)}}(G,H,h,\beta,\tV,\tB,\bg_0)$.
No $G$-graded Lie algebra with superscript \emph{(I)} is isomorphic to one with superscript \emph{(II)}. Moreover,
\begin{itemize}
\item $\frA^\mathrm{(I)}(G,T,\beta,\tV,\tW)$ and $\frA^\mathrm{(I)}(G,T',\beta',\tV',\tW')$ are isomorphic if and only if $T'=T$ and either  $\beta'=\beta$ and $(\tV',\tW')\sim(\tV,\tW)$, or $\beta'=\beta^{-1}$ and $(\tV',\tW')\sim(\tW,\tV)$ as in Definition \ref{df:datum_finitary_ab}.
\item $\frA^{\mathrm{(II)}}(G,H,h,\beta,\tV,\tB,\bg_0)$ and $\frA^{\mathrm{(II)}}(G,H',h',\beta',\tV',\tB',\bg'_0)$ are isomorphic if and only if $H'=H$, $h'=h$, $\beta'=\beta$, and $(\tV',\tB',\bg'_0)\sim(\tV,\tB,\bg_0)$ as in Definition \ref{df:datum_finitary_2}.
\end{itemize}
\end{theorem}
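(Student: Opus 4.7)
The plan is to invoke the framework developed in Subsection \ref{ssSLA}, specialized to $\cL=\fsl(U,\Pi)$ with associated associative algebra $\cR=\fpu$. First, given a $G$-grading on $\cL$, I would extend its comodule structure $\rho\colon\cL\to\cL\otimes\cH$ via Theorem \ref{T1} to a map $\rho'\colon\cR\otimes\cH\to\cR\otimes\cH$, decomposing it via central idempotents $e_1,e_2\in\cH$ into a homomorphism and the negative of an antihomomorphism. The element $h\bydef e_1-e_2$ is a uniquely determined group-like element of order at most $2$, as shown in Subsection \ref{ssSLA}, and whether its order is $1$ or $2$ partitions the classification into Types I and II. If $h=e$, then $\rho'$ is an associative algebra homomorphism, $\cR$ inherits a $G$-grading restricting to that of $\cL$, and Corollary \ref{lfd_abelian_G} yields the model $\frA^{(\mathrm{I})}(G,T,\beta,\tV,\tW)$. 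If $h$ has order $2$, then, setting $\bG=G/\langle h\rangle$, the algebra $\cR$ inherits a $\bG$-grading together with an antiautomorphism $\vphi$ satisfying the compatibility condition \eqref{eq:chi_compatibility}; Proposition \ref{lfd_chi} combined with Theorem \ref{lfd_2} then gives $\frA^{(\mathrm{II})}(G,H,h,\beta,\tV,\tB,\bg_0)$. The disjointness of Types I and II is immediate because $h$ is an invariant of the grading.

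For the isomorphism criterion, I would take a $G$-graded Lie isomorphism $\psi\colon\cL\to\cL'$ and form its unique extension $\psi'\colon\cR\to\cR'$, which by the argument of Subsection \ref{ssSLA} is either an associative isomorphism or the negative of an associative antiisomorphism. Since $\cL$ generates $\cR$, this $\psi'$ automatically preserves the $G$-grading in Type I and the $\bG$-grading in Type II.

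In Type I, if $\psi'$ is an associative isomorphism, Corollary \ref{lfd_abelian_G} applied to $\cR$ and $\cR'$ yields the first branch of the criterion. If instead $\psi'$ is the negative of an antiisomorphism, then $-\psi'$ is a $G$-graded associative antiisomorphism $\cR\to\cR'$, equivalently a $G$-graded isomorphism $\cR\to(\cR')^{\mathrm{op}}$. A short computation with the twisted group algebras underlying these models (the defining relation $X_tX_{t'}=\beta(t,t')X_{t'}X_t$ passes to the opposite algebra with $\beta$ replaced by $\beta^{-1}$, while the roles of $V$ and $W$ are interchanged) identifies $\mathfrak{F}(G,T',\beta',\tV',\tW')^{\mathrm{op}}$ with $\mathfrak{F}(G,T',(\beta')^{-1},\tW',\tV')$, yielding the second branch. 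The converse direction is immediate from $\cL=[\cR,\cR]$.

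In Type II, I would reduce the two possibilities for $\psi'$ to a single clean picture. If $\psi'$ is an associative isomorphism, then, since $\psi$ commutes with the action of the fixed character $\chi$ (with $\chi(h)=-1$) on source and target, the uniqueness of extensions forces $\psi'\circ(-\vphi)=(-\vphi')\circ\psi'$, i.e., $\psi'\circ\vphi=\vphi'\circ\psi'$. If $\psi'=-\tau$ for an antiisomorphism $\tau\colon\cR\to\cR'$, then the composition $\tau\circ\vphi$ is a genuine associative isomorphism whose restriction to $\cL$ equals $\psi\circ\eta(\chi)|_{\cL}$, itself a $G$-graded Lie isomorphism $\cL\to\cL'$; a parallel uniqueness argument shows that $\tau\circ\vphi$ intertwines $\vphi$ and $\vphi'$. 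Hence, after possibly replacing $\psi$ by $\psi\circ\eta(\chi)|_{\cL}$, the extension is always a $\bG$-graded associative isomorphism intertwining $\vphi$ and $\vphi'$, so Theorem \ref{lfd_2} supplies the conditions $T=T'$ (whence $H=H'$), $\beta=\beta'$, and $(\tV,\tB,\bg_0)\sim(\tV',\tB',\bg'_0)$. The converse follows from the defining formula \eqref{eq:grading_type_II}. The main obstacle I anticipate is the Type II bookkeeping: carefully tracking the interaction of the two possible extension types of $\psi'$ with the antiautomorphism $\vphi$, so as to reduce uniformly to the clean case of a $\bG$-graded associative isomorphism intertwining the antiautomorphisms.
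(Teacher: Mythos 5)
Your proposal is correct and follows essentially the same route as the paper: reduce to the associative side via the unique extension of a graded Lie isomorphism to an isomorphism or the negative of an antiisomorphism of $\cR$, use uniqueness of the grading extension and of the character action to pass to $\bG$-graded (anti)isomorphisms preserving $\vphi$, and then invoke Corollary \ref{lfd_abelian_G} (Type I) and Theorem \ref{lfd_2} (Type II), with the opposite-algebra correspondence $\cD\leftrightarrow\cD^{\mathrm{op}}$, $\tV\leftrightarrow\tW$, $\beta\leftrightarrow\beta^{-1}$ accounting for the second branch in Type I. The only cosmetic difference is in Type II: you normalize the anti-isomorphism subcase by pre-composing $\psi$ with $\eta(\chi)$, whereas the paper instead observes directly that $(\cR,\vphi)\cong(\cR^{\mathrm{op}},\vphi)$ via $\vphi$ itself; these are equivalent maneuvers.
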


\begin{proof}
The first assertion is already proved. Suppose that $\cL$ and $\cL'$ are $G$-graded Lie algebras of the above form. If we have an isomorphism $\cL\to\cL'$ of graded algebras, then it extends to an isomorphism or the negative of an antiisomorphism $\cR\to\cR'$ of the corresponding associative algebras, so we obtain an isomorphism $\psi\colon\cR\to\cR'$ or $\psi\colon\cR^{\mathrm{op}}\to\cR'$. If $\cL$ is of Type I, then $\cR$ has a $G$-grading that restricts to the given grading on $\cL$. Clearly, $\psi$ sends this $G$-grading on $\cR$ to a $G$-grading on $\cR'$ that restricts to the given grading on $\cL'$, hence $\cL'$ is also of Type I. Since the gradings on $\cR$ and $\cR'$ are uniquely determined by their restrictions to $\cL$ and $\cL'$, respectively, we conclude that $\psi$ is an isomorphism of graded algebras. Applying Corollary \ref{lfd_abelian_G} and observing that passing from $\cR$ to $\cR^{\mathrm{op}}$ is equivalent to  replacing $\cD$ by $\cD^{op}$ and switching $\tV$ and $\tW$ (see the discussion at the beginning of Subsection \ref{ssASF}), we get the condition in the first item above. The sufficiency of this condition is obvious.

It remains to consider the case where both $\cL$ and $\cL'$ are of Type II. Since the action of any character of $G$ extends uniquely from $\cL$ to $\cR$ (respectively, from $\cL'$ to $\cR'$), $\psi$ preserves the $\wh{G}$-action, so the characterization of $h$ given by \eqref{eq:characterize_h} shows that $h=h'$. Hence both $\cR$ and $\cR'$ have $\bG$-gradings, where $\bG=G/\langle h\rangle$, and $\psi$ is an isomorphism of $\bG$-graded algebras. Moreover, since the antiautomorphisms $\vphi$ and $\vphi'$ are determined by the action of the same character, we have $\vphi'=\psi\circ\vphi\circ\psi^{-1}$. Note that $(\cR,\vphi)$ is isomorphic to $(\cR^{\mathrm{op}},\vphi)$ by virtue of $\vphi$ itself. By Theorem \ref{lfd_2}, it follows that $H/\langle h\rangle=H'/\langle h\rangle$, $\beta=\beta'$ and $(\tV',\tB',\bg'_0)\sim(\tV,\tB,\bg_0)$, which yields the condition in the second item above. The sufficiency of this condition is obvious.
\end{proof}

\subsubsection{Graded bases} 

The Lie algebra $\cL=\frA^\mathrm{(I)}(G,T,\beta,\tV,\tW)$, is the commutator subalgebra of $\cR=\mathfrak{F}(G,T,\beta,\tV,\tW)$. As a vector space over $\FF$, this latter can be written as $\tV\ot\cD\ot\tW$ --- see Subsection \ref{ssGSAMI}. Recall that, for $A,A'\in G/T$, we have $(\tW_A,\tV_{A'})=0$ unless $A'=A^{-1}$, and $\tW_{A^{-1}}$ is a total subspace of  $\tV_A^\ast$. The action of the tensor $v\ot d\ot w\in\tV_A\ot\cD\ot\tW_{(A')^{-1}}$ on $u\ot d'\in\tV_{A''}\ot\cD$ is given by $(v\ot d\ot w)(u\ot d')=v\ot d(w,u)d'=(w,u)(v\ot (dd'))$, which is zero unless $A''=A'$. Also, the degree of the element $v\ot d\ot w$ in the $G$-grading equals $\gamma(A)(\deg d)\gamma(A')^{-1}$. Computing the commutators of such elements and using our standard notation $X_t$ for the basis elements of $\cD$, we find that $\cL$ is spanned by the elements of the form $v\ot X_t\ot w$ where $(w,v)=0$ or $t=e$. Note that since $\tW$ is a total subspace of $\tV^\ast$, there exist $A_0\in G/T$, $v_0\in \tV_{A_0}$ and $w_0\in\tW_{A_0^{-1}}$ such that $(w_0,v_0)=1$. Then $\FF(v_0\ot 1\ot w_0)\oplus\cL=\cR$. Given bases for the vector spaces $\tV_A$ and $\tW_A$ for all $A\in G/T$  such that the basis for $\tV_{A_0}$ includes $v_0$ and the basis for $\tW_{A_0^{-1}}$ includes $w_0$, we obtain a graded basis for $\cL=\frA^\mathrm{(I)}(G,T,\beta,\tV,\tW)$ consisting of the following two sets: first, the elements of the form $v\ot 1\ot w-(w,v)(v_0\ot 1\ot w_0)$ with $v\ne v_0$ or $w\ne w_0$, and, second, the elements of the form $v\ot X_t\ot w$ with $t\ne e$, where $v$ and $w$ range over the given bases of $\tV$ and $\tW$, respectively. Hence, for $g\ne e$, a basis for $\cL_g$ consists of the elements
\[
E^{\mathrm{(I)}}_{g,v,w}\bydef v\ot X_t\ot w
\]
where $v$ ranges over a basis of $\tV_A$, $w$ ranges over a basis of $\tW_{gA^{-1}}$, and $A$ ranges over $G/T$, while $t=g\gamma(A)^{-1}\gamma(g^{-1}A)$. For $g=e$, we take the elements
\[
E^{\mathrm{(I)}}_{g,v,w}-(w,v)E^{\mathrm{(I)}}_{g,v_0,w_0}
\]
and discard the zero obtained when $v=v_0$ and $w=w_0$.

In the case of $\cL=\frA^{\mathrm{(II)}}(G,H,h,\beta,\tV,\tB,\bg_0)$, we have to start with the $\bG$-grading on  $\cR=\mathfrak{F}(\bG,\bT,\beta,\tV,\tB,\bg_0)$ where $\bG=G/\langle h\rangle$ and $\bT=H/\langle h\rangle$. Now $\cR$ can be written as $\tV\ot\cD\ot\tV$ where the action of the tensor $v\ot d\ot w\in\tV_A\ot\cD\ot\tV_{\bg_0^{-1}(A')^{-1}}$ on $u\ot d'\in\tV_{A''}\ot\cD$ is given by $(v\ot d\ot w)(u\ot d')=v\ot dB(w,u)d'$, which is zero unless $A''=A'$. Here we are using the notation of Subsection \ref{ssAAIIC} --- in particular, the $\vphi_0$-sesquilinear form $B$ given by \eqref{eq:B_and_tB_1} and \eqref{eq:B_and_tB_2} --- and we have identified $w\in\tV$ with $f_w\in W$ given by $f_w(u)=B(w,u)$ for all $u\in V$. 
The degree of the element $v\ot d\ot w$ in the $\bG$-grading equals 
$\gamma(A)(\deg d)\bg_0\gamma(\bg_0^{-1}(A')^{-1})=\gamma(A)(\deg d)\tau(A')\gamma(A')^{-1}$ where, as before,  $\tau(A')=\bg_0\gamma(A')^2$ if $\bg_0(A')^2=\bT$, and we have set $\tau(A')=\ov{e}$ if $\bg_0(A')^2\ne\bT$. Also, \eqref{eq:vphi_on_CF} yields $\vp(v\ot d\ot w)=Q^{-1}w\ot\vp_0(d) v$. 
Taking $d=X_t$ and recalling that $\vp_0(X_t)=\beta(t)X_t$, we obtain 
\[
\vp(v\ot X_t\ot w)=\beta(t)(Q^{-1}(w)\ot X_t\ot v).
\]

With our fixed character $\chi\colon G\to\FF^\times$ satisfying $\chi(h)=-1$, the $G$-grading on the vector space $\cR$ is given by \eqref{eqant}. 
Taking into account that $\vp^2(r)=\chi(g)^2 r$ for any $r\in\cR_\bg$, we can write
\[
r=\frac{1}{2}\left(r-\frac{1}{\chi(g)}\vp(r)\right)+\frac{1}{2}\left(r+\frac{1}{\chi(g)}\vp(r)\right)
\]
where
\[
\frac{1}{2}\left(r-\frac{1}{\chi(g)}\vp(r)\right)\in \cR_g\quad\mbox{and}\quad
\frac{1}{2}\left(r+\frac{1}{\chi(g)}\vp(r)\right)\in \cR_{gh}.
\]
Since $\chi(gh)=-\chi(g)$, the second expression is identical to the first one if $g$ is replaced by $gh$.
Substituting $r=v\ot X_t\ot w$ and the above expression for $\vp(v\ot X_t\ot w)$, we find that $\cR_g$ is spanned by the elements
\begin{equation*}
v\ot X_t\ot w-\frac{\beta(t)}{\chi(g)}Q^{-1}w\ot X_t\ot v,
\end{equation*}
where $v\in\tV_A$, $w\in\tV_{\bg_0^{-1}(A')^{-1}}$, and $\gamma(A)t\tau(A')\gamma(A')^{-1}=\bg$.
The eigenvalues of $Q$ are given by \eqref{eq:lambda_and_mu}, and $\mu_{\bg_0^{-1}(A')^{-1}}=\mu_{A'}^{-1}$, so we can rewrite the above spanning elements as follows:
\begin{equation*}
v\ot X_t\ot w-\mu_{A'}\beta(t)\beta(\tau(A'))\chi^{-1}(g)(w\ot X_t\ot v).
\end{equation*}
Finally, recalling that $\mu_{A'}$ is determined in Proposition \ref{lfd_chi} and $A'=\bg^{-1}A$, we conclude that $\cR_g$ is spanned by the elements 
\begin{equation}\label{eq:basis_AII}
E^{\mathrm{(II)}}_{g,v,w}\bydef v\ot X_t\ot w-\dfrac{\mu_0\beta(t)\chi(g)}{\chi^2(A)}(w\ot X_t\ot v)
\end{equation}
where $v$ ranges over a basis of $\tV_A$, $w$ ranges over a basis of $\tV_{\bg_0^{-1}\bg A^{-1}}$, and $A$ ranges over $\bG/\bT=G/H$, while $t=\bg\gamma(A)^{-1}\gamma(\bg^{-1}A)\tau(\bg^{-1}A)$. If we discard zeros among the elements $E^{\mathrm{(II)}}_{g,v,w}$, the remaining ones form a basis for $\cR_g$. This is also a basis for $\cL_g$ unless $g=e$ or $g=h$. For these two cases, bases can be obtained using the same idea as for Type I, namely, subtracting a suitable scalar multiple of $E^{\mathrm{(II)}}_{g,v_0,w_0}$, where  $v_0\in\tV_{A_0}$ and $w_0\in\tV_{\bg_0^{-1}A_0^{-1}}$ are such that $\tB(w_0,v_0)=1$. Specifically, for $g=e$ and $g=h$, we replace the elements \eqref{eq:basis_AII} by
\[
E^{\mathrm{(II)}}_{g,v,w}-\beta(t_0)\beta(t)\tB(w,v)E^{\mathrm{(II)}}_{g,v_0,w_0}
\]
where $t=\tau(A)$ and $t_0=\tau(A_0)$.

\subsubsection{Countable case} 

In the case $\cL=\Sl(\infty)$, we can express the classification of $G$-gradings in combinatorial terms. Here $\cR=M_\infty(\FF)$, whose $G$-gradings are classified in Corollary \ref{gradings_on_M_infinity}. Namely, $\cR$ is isomorphic to $\frF(G,\cD,\kappa)$, which we can write as $\frF(G,T,\beta,\kappa)$ because $G$ is abelian. Recall that the function $\kappa\colon G/T\to\{0,1,2,\ldots,\infty\}$ has a finite or countable support; here $|\kappa|\bydef\sum_{A\in G/T}\kappa(A)$ must be infinite. The $G$-grading on $\frF(G,T,\beta,\kappa)$ restricts to $\cL$, and we denote the resulting $G$-graded Lie algebra by $\frA^\mathrm{(I)}(G,T,\beta,\kappa)$. In this way we obtain all Type I gradings on $\cL$. 

For Type II, we can use Corollary \ref{gradings_on_M_infinity_2}, with $G$ replaced by $\bG=G/\langle h\rangle$ and with the function $\mu\colon A\mapsto\mu_A$ ($A\in\bG/\bT$) satisfying  \eqref{eq:mu_for_chi}. With the other parameters fixed, there are at most two such functions. Indeed, if there is $A_0\in\bG/\bT$ such that $\bg_0 A_0^2=\bT$ and $\kappa(A_0)$ is finite and odd (forcing $\mu_{A_0}=1$), then there is at most one admissible function $\mu$, defined by \eqref{eq:mu_for_chi} with $\mu_0=\chi^2(A_0)\beta(\tau(A_0))$. Such function exists if and only if all $A_0$ of this kind produce the same value $\mu_0$. If there is no $A_0$ of this kind, then there are exactly two admissible functions $\mu$, defined by \eqref{eq:mu_for_chi} where $\mu_0$ satisfies $\mu_0^2=\chi^{-2}(\bg_0)$. Denote by $\frA^\mathrm{(II)}(G,H,h,\beta,\kappa,\mu_0,\bg_0)$ the $G$-graded Lie algebra obtained by restricting to $\cL$ the refinement \eqref{eqant} of the $\bG$-grading on $\frF(G,\bT,\beta,\kappa,\mu,\bg_0)$. 

Recall that $\kappa^g$ is defined by $\kappa^g(gA)=\kappa(A)$ for all $A\in G/T$. Set $\wt{\kappa}(A)=\kappa(A^{-1})$.

\begin{corollary}\label{gradings_on_sl_infinity}
Let $\FF$ be an algebraically closed field, $\chr{\FF}\ne 2$, $G$ an abelian group. Suppose $\cL=\Sl(\infty)$ over $\FF$ is given a $G$-grading. Then, as a graded algebra, $\cL$ is isomorphic to one of $\frA^\mathrm{(I)}(G,T,\beta,\kappa)$ or $\frA^\mathrm{(II)}(G,H,h,\beta,\kappa,\mu_0,\bg_0)$. No $G$-graded Lie algebra with superscript \emph{(I)} is isomorphic to one  with superscript \emph{(II)}. Moreover,
\begin{itemize}
\item $\frA^\mathrm{(I)}(G,T,\beta,\kappa)\cong\frA^\mathrm{(I)}(G,T',\beta',\kappa')$ if and only if $T'=T$ and either $\beta'=\beta$ and $\kappa'=\kappa^g$ for some $g\in G$, or $\beta'=\beta^{-1}$ and $\kappa'=\wt{\kappa}^g$ for some $g\in G$.
\item $\frA^\mathrm{(II)}(G,H,h,\beta,\kappa,\mu_0,\bg_0)\cong\frA^\mathrm{(II)}(G,H',h',\beta',\kappa',\mu'_0,\bg'_0)$ if and only if $H'=H$, $h'=h$, $\beta'=\beta$, and there exists $g\in G$ such that $\kappa'=\kappa^\bg$, $\mu'_0=\mu_0\chi^2(g)$ and $\bg'_0=\bg_0\bg^{-2}$.\qed
\end{itemize}
\end{corollary}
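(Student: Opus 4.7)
The plan is to specialize Theorem \ref{tmsla} to the countable-dimensional case by combining it with the combinatorial classifications of gradings on $M_\infty(\FF)$ (Corollary \ref{gradings_on_M_infinity}) and of gradings with a compatible antiautomorphism (Corollary \ref{gradings_on_M_infinity_2}), together with the compatibility condition of Proposition \ref{lfd_chi}. Since $\cL=\Sl(\infty)$ has countable dimension, we may take $\cR=M_\infty(\FF)$, and all the relevant graded data take the combinatorial form already discussed in the paper.

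\emph{Existence.} By Theorem \ref{tmsla}, the given $G$-grading on $\cL$ is isomorphic to a model graded algebra of Type I or Type II. In the Type I case, the grading comes by restriction from a $G$-grading on $\cR$, and by Corollary \ref{gradings_on_M_infinity} the latter is isomorphic to $\frF(G,T,\beta,\kappa)$ for a suitable triple $(T,\beta,\kappa)$, hence $\cL\cong\frA^{\mathrm{(I)}}(G,T,\beta,\kappa)$. In the Type II case, one gets a $\bG$-grading on $\cR$ together with a grading-preserving antiautomorphism $\vphi$ satisfying the compatibility condition \eqref{eq:chi_compatibility}; by Corollary \ref{gradings_on_M_infinity_2}, $(\cR,\vphi)$ is isomorphic to some $\frF(\bG,\bT,\beta,\kappa,\mu,\bg_0)$ with $\bT=H/\langle h\rangle$. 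Proposition \ref{lfd_chi} forces $\chi^2$ to be trivial on $\bT$ (so $H\subset G$ is an elementary $2$-subgroup and $\pi$ splits over $\bT$) and pins down $\mu$ via \eqref{eq:mu_for_chi} in terms of a single scalar $\mu_0\in\FF^\times$. This produces a model algebra $\frA^{\mathrm{(II)}}(G,H,h,\beta,\kappa,\mu_0,\bg_0)$.

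\emph{Separation and isomorphism conditions.} Theorem \ref{tmsla} already tells us that a Type I and a Type II graded Lie algebra are never isomorphic, and gives the isomorphism criterion in terms of the data $(\tV,\tW)$ or $(\tV,\tB,\bg_0)$. It remains to translate those criteria into combinatorial language. For Type I, $(\tV,\tW)\sim(\tV',\tW')$ in the sense of Definition \ref{df:datum_finitary_ab} says there is $g\in G$ with $\dim\tV'_{gA}=\dim\tV_A$ for all $A$, which is precisely $\kappa'=\kappa^g$; swapping the roles of $\tV$ and $\tW$ interchanges $\kappa$ with $\wt\kappa(A)=\kappa(A^{-1})$ and replaces $\beta$ by $\beta^{-1}$ (this is the effect of passing to $\cR^{\mathrm{op}}$, as recorded in Subsection \ref{ssASF}). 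For Type II, Definition \ref{df:datum_finitary_2} yields $\bg'_0=\bg_0\bg^{-2}$, $\dim\tV'_{\bg A}=\dim\tV_A$ (i.e.\ $\kappa'=\kappa^\bg$) and $\mu'_{\bg A}=\mu_A$ for all $A$; substituting \eqref{eq:mu_for_chi} into the last equation and using $\tau(\bg A)=\tau(A)$ (after choosing the transversal compatibly), the factors $\beta(\tau(\cdot))$ cancel and we are left with $\mu'_0\chi^{-2}(\bg A)=\mu_0\chi^{-2}(A)$, i.e.\ $\mu'_0=\mu_0\chi^2(g)$. Conversely, from combinatorial data satisfying these conditions one recovers an equivalence of triples in the above senses, so the criteria are also sufficient.

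\emph{Main obstacle.} The one point that requires genuine care is the Type II isomorphism criterion: the function $\mu$ is not free data but is constrained by \eqref{eq:mu_for_chi} and tied to the character $\chi$, so one has to verify that the shift $A\mapsto\bg A$ in Definition \ref{df:datum_finitary_2} produces \emph{exactly} a multiplication of $\mu_0$ by $\chi^2(\bg)$, with no residual dependence on the choice of transversal or on the case split $\bg_0 A^2=\bT$ vs.\ $\bg_0A^2\ne\bT$ in \eqref{eq:mu_for_chi}. This is essentially a direct calculation using $\bg'_0=\bg_0\bg^{-2}$ and $\tau(\bg A)=\tau(A)$; once it is in place, the rest of the statement follows mechanically from Theorem \ref{tmsla} and Corollaries \ref{gradings_on_M_infinity} and \ref{gradings_on_M_infinity_2}.
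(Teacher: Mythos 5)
Your proof is correct and takes essentially the paper's intended route: the corollary is stated with an immediate \qed because it is a mechanical specialization of Theorem \ref{tmsla}, Corollaries \ref{gradings_on_M_infinity} and \ref{gradings_on_M_infinity_2}, and Proposition \ref{lfd_chi} to the countable-dimensional case. Your ``main obstacle'' paragraph correctly identifies the one genuine computation — verifying that $\mu_A=\mu'_{\bg A}$ under \eqref{eq:mu_for_chi} reduces to $\mu'_0=\mu_0\chi^2(g)$ uniformly across the two cases, using $\bg'_0=\bg_0\bg^{-2}$ and $\tau'(\bg A)=\tau(A)$ — which the paper leaves implicit.
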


\subsection{Orthogonal and symplectic Lie algebras}\label{ssOSLA}

In the case $\cL=\fso(U,\Phi)$ or $\cL=\fsp(U,\Phi)$, we deal with simple Lie algebras of skew-symmetric elements in the associative algebra $\cR=\fpu$ with respect to the involution $\vphi$ determined by the nondegenerate form $\Phi$, which is either orthogonal or symplectic. Here $\Pi$ is identified with $U$ by virtue of $\Phi$. We continue to assume that $U$ is infinite-dimensional. Suppose that $\cL$ is given a $G$-grading. Applying Theorem \ref{T2} to the corresponding Lie homomorphism $\rho\colon\cL\ot\cH\to\cL\ot\cH$ and observing that $\tau=0$ (because $\cL=[\cL,\cL]$), we conclude that  $\rho$ extends to a homomorphism of associative algebras $\rho'\colon\cR\ot\cH\to\cR\ot\cH$. Since $\cL$ generates $\cR$, it follows that $\rho'$ is a comodule structure. This gives $\cR$ a $G$-grading that restricts to the given grading on $\cL$, i.e., $\cL_g=\cR_g\cap\cL$ for all $g\in G$. Moreover, since $\vphi$ restricts to the negative identity on $\cL$, the restriction of the map $\vphi\ot\id_\cH$ to $\cL\ot\cH$ commutes with $\rho$, which implies that $\vphi\ot\id_\cH$ commutes with $\rho'$. This means that $\vphi$ is an involution of $\cR$ as a $G$-graded algebra. Theorem \ref{lfd_2} and Proposition \ref{lfd_inv} classify all the pairs $(\cR,\vphi)$ up to isomorphism.

To state the classification of $G$-gradings on the Lie algebras $\fso(U,\Phi)$ and $\fsp(U,\Phi)$, we introduce the model algebras $\frB(G,T,\beta,\tV,\tB,g_0)$ and $\frC(G,T,\beta,\tV,\tB,g_0)$. 

Let $T\subset G$ be a finite elementary $2$-subgroup with a nondegenerate alternating bicharacter $\beta$. Let $\cL$ be the Lie algebra of skew-symmetric elements in the $G$-graded associative algebra with involution $\cR=\mathfrak{F}(G,T,\beta,\tV,\tB,g_0)$ where the bilinear form $\tB$ on $\tV$ satisfies the symmetry condition \eqref{eq:def_mu} with $\mu_A$ given by \eqref{eq:mu_for_inv}. If $\sgn(\vphi)=1$, we will denote $\cL$ by $\frB(G,T,\beta,\tV,\tB,g_0)$. By Theorem \ref{lfd_2}, it is isomorphic to $\fso(U,\Phi)$ where $U=\tV^\ell$, $\ell^2=|T|$, and the bilinear form $\Phi\colon U\times U\to\FF$ is given by \eqref{eq:two_bilinear_forms_2}. If $\sgn(\vphi)=-1$, we will denote $\cL$ by $\frC(G,T,\beta,\tV,\tB,g_0)$. By Theorem \ref{lfd_2}, it is isomorphic to $\fsp(U,\Phi)$ where $U$ and $\Phi$ are as above. 

\begin{theorem}\label{tsosp}
Let $G$ be an abelian group and let $\FF$ be an algebraically closed field, $\chr{\FF}\ne 2$. If an orthogonal or symplectic Lie algebra $\cL$ of finitary linear operators on an infinite-dimensional vector space over $\FF$ is given a $G$-grading, then $\cL$ is isomorphic as a graded algebra to some $\frB(G,T,\beta,\tV,\tB,g_0)$ in the orthogonal case or $\frC(G,T,\beta,\tV,\tB,g_0)$ in the symplectic case. The $G$-graded algebras with parameters $(T,\beta,\tV,\tB,g_0)$ and $(T',\beta',\tV',\tB',g'_0)$ are isomorphic if an only if $T'=T$, $\beta'=\beta$, and $(\tV',\tB',g'_0)\sim(\tV,\tB,g_0)$ as in Definition \ref{df:datum_finitary_2}.
\end{theorem}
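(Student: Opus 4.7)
My plan is to follow the sketch in the paragraph preceding the statement: lift the given $G$-grading on $\cL$ to a compatible graded structure on the associative algebra $\cR$ via the functional-identity machinery, show that the involution $\vphi$ respects this grading, and then invoke Theorem \ref{lfd_2} together with Proposition \ref{lfd_inv}. Concretely, for existence, I would encode the $G$-grading on $\cL$ as the $\cH$-linear surjective Lie homomorphism $\rho\colon\cL\otimes\cH\to\cL\otimes\cH$ extending the corresponding comodule map. With $\cA=\cR$, $\cU=\cR\otimes\cH$ equipped with $\vphi\otimes\id_\cH$, the Lie ideal $\cL\otimes\cH$ of $\sks(\cU,\vphi\otimes\id_\cH)=\cL\otimes\cH$ allows Theorem \ref{T2} to apply. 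The map $\tau$ produced there vanishes because $\cL$ is simple and so $[\cL\otimes\cH,\cL\otimes\cH]=\cL\otimes\cH$. Hence $\rho$ agrees with the restriction of an associative homomorphism $\sigma$; since $\cL$ generates $\cR$ as an associative algebra, the domain of $\sigma$ is all of $\cR\otimes\cH$. A short calculation using $(x\otimes 1)(y\otimes h)=xy\otimes h$ and the $\cH$-linearity of $\rho$ shows that $\sigma$ is $\cH$-linear, and the counit and coassociativity axioms for the associated comodule structure follow by comparing algebra homomorphisms on the generating set $\cL\otimes\cH$. The two associative antihomomorphisms $\sigma\circ(\vphi\otimes\id_\cH)$ and $(\vphi\otimes\id_\cH)\circ\sigma$ agree on $\cL\otimes\cH$ because $\vphi|_\cL=-\id$, so they agree everywhere and $\vphi$ preserves the grading on $\cR$. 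Theorem \ref{lfd_2} and Proposition \ref{lfd_inv} then exhibit $(\cR,\vphi)$ as isomorphic to $\frF(G,\cD,\tV,\tB,g_0)$ with $\mu_A$ given by \eqref{eq:mu_for_inv}; restricting to skew-symmetric elements produces the isomorphism with $\frB(G,T,\beta,\tV,\tB,g_0)$ if $\sgn(\vphi)=1$ or $\frC(G,T,\beta,\tV,\tB,g_0)$ if $\sgn(\vphi)=-1$.

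For the isomorphism criterion, sufficiency is immediate because any isomorphism of $G$-graded algebras with involution restricts to an isomorphism of the skew-symmetric parts. For necessity, suppose $\psi\colon\cL\to\cL'$ is an isomorphism of $G$-graded Lie algebras. Theorem \ref{T2} applied with $\cH=\FF$ extends $\psi$ to an associative homomorphism $\tilde\psi\colon\cR\to(\cR')^\sharp$; its image contains the generating subalgebra $\cL'$ of $\cR'$, and $\cR$ is simple, so $\tilde\psi$ is in fact an isomorphism $\cR\to\cR'$. The antihomomorphisms $\tilde\psi\circ\vphi$ and $\vphi'\circ\tilde\psi$ coincide on $\cL$ (both equal $-\psi$) and hence on all of $\cR$, so $\tilde\psi$ intertwines the two involutions. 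Similarly, the $G$-grading on $\cR$ (respectively $\cR'$) is uniquely determined by its restriction to the generating subalgebra $\cL$ (respectively $\cL'$) through the construction sketched above, so $\tilde\psi$ is automatically graded. Hence $(\cR,\vphi)\cong(\cR',\vphi')$ as $G$-graded algebras with involution, and Theorem \ref{lfd_2} yields $T=T'$, $\beta=\beta'$, and $(\tV,\tB,g_0)\sim(\tV',\tB',g'_0)$ in the sense of Definition \ref{df:datum_finitary_2}.

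The hard part will be verifying that the associative extension $\sigma$ produced by Theorem \ref{T2} is genuinely $\cH$-linear and corresponds to a comodule structure, rather than merely an abstract algebra homomorphism. This relies on the uniqueness of an associative extension of a Lie homomorphism defined on the generating non-central Lie ideal $\cL\otimes\cH$, which is a consequence of the $d$-freeness machinery underlying Theorem \ref{T2}. Once this uniqueness is in place, $\cH$-linearity, the comodule axioms, and the compatibility with $\vphi$ all follow by exhibiting two natural candidate maps in each case and observing that they agree on the generating ideal.
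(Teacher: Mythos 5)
Your proposal is correct and follows essentially the same route as the paper: apply Theorem \ref{T2} (with $\tau=0$ since $\cL=[\cL,\cL]$) to lift the grading to the associative algebra $\cR$ together with the involution $\vphi$, classify $(\cR,\vphi)$ via Theorem \ref{lfd_2} and Proposition \ref{lfd_inv}, and obtain the isomorphism criterion by extending a graded Lie isomorphism to a graded isomorphism of associative algebras with involution. The only minor remark is that your final paragraph somewhat overstates the role of $d$-freeness-based uniqueness: the $\cH$-linearity, comodule axioms, and compatibility with $\vphi$ all follow from the elementary fact that two algebra (anti)homomorphisms on $\cR\otimes\cH$ agreeing on the generating Lie ideal $\cL\otimes\cH$ must agree everywhere.
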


\begin{proof}
The first assertion is already proved. Suppose that $\cL$ and $\cL'$ are the $G$-graded Lie algebras as above with parameters $(T,\beta,\tV,\tB,g_0)$ and $(T',\beta',\tV',\tB',g'_0)$, respectively. If we have an isomorphism $\cL\to\cL'$ of graded algebras, then it extends to an isomorphism $\psi\colon\cR\to\cR'$ of the corresponding associative algebras with involution (use \cite[Theorem 6.18]{FIbook} or our Theorem \ref{T2} with $\cH=\FF$). Since the gradings on $\cR$ and $\cR'$ are uniquely determined by their restrictions to $\cL$ and $\cL'$, respectively, we conclude that $\psi$ is an isomorphism of graded algebras. It remains to invoke Theorem \ref{lfd_2}. 
\end{proof}

\subsubsection{Graded bases} 

The calculations which we used to find graded bases in the case of Type II gradings on special linear Lie algebras apply also for orthogonal and symplectic Lie algebras, with the following simplifications: we always deal with the group $G$ (hence we omit bars and $\chi$) and $Q$ is $\id$ in the orthogonal case and $-\id$ in the symplectic case. Thus,  $\frB(G,T,\beta,\tV,\tB,g_0)$ is the set of skew-symmetric elements in $\frF(G,\cD,\tV,\tB,g_0)$, where $\tB$ is an ``orthosymplectic'' form on $\tV$ that determines a ``Hermitian'' form on $V$, as described in Subsection \ref{ssI}. The skew-symmetric elements
\begin{equation}\label{echiO}
v\ot X_t\ot w-\beta(t)(w\ot X_t\ot v)
\end{equation}
span $\cL=\frB(G,T,\beta,\tV,\tB,g_0)$, so we obtain a basis for $\cL_g$ by letting $A$ range over $G/T$, $v$ over a basis of $\tV_A$, and $w$ over a basis of $\tV_{g_0^{-1}gA^{-1}}$, while $t=g\gamma(A)^{-1}\gamma(g^{-1}A)\tau(g^{-1}A)$, and taking the nonzero elements given by \eqref{echiO}. 

In a similar way, $\frC(G,T,\beta,\tV,\tB,g_0)$ is the set of skew-symmetric elements in $\frF(G,\cD,\tV,\tB,g_0)$, where $\tB$ is an ``orthosymplectic'' form on $\tV$ that determines a ``skew-Hermitian'' form on $V$. The skew-symmetric elements
\begin{equation}\label{echiS}
v\ot X_t\ot w+\beta(t)(w\ot X_t\ot v)
\end{equation}
span $\cL=\frC(G,T,\beta,\tV,\tB,g_0)$, and we obtain a basis for $\cL_g$ as above, but using \eqref{echiS} instead of \eqref{echiO}.

\subsubsection{Countable case}

In the cases $\cL=\So(\infty)$ and $\cL=\Sp(\infty)$, we can express the classification of $G$-gradings in combinatorial terms. Here we have to deal with the pairs $(\cR,\vp)$ where $\cR=M_\infty(\FF)$ is endowed with a $G$-grading and an involution $\vp$ that respects this grading. Such pairs are classified in Corollary \ref{gradings_on_M_infinity_inv}. Namely, $(\cR,\vphi)$ is isomorphic to some $\frF(G,T,\beta,\kappa,\delta,g_0)$ where $|\kappa|$ must be infinite and $\delta=\sgn(\vphi)$. 

We denote by $\frB(G,T,\beta,\kappa,g_0)$ and $\frC(G,T,\beta,\kappa,g_0)$ the $G$-graded Lie algebras of skew-symmetric elements (with respect to $\vphi$) in $\frF(G,T,\beta,\kappa,\delta,g_0)$ where $\delta=1$ and $\delta=-1$, respectively.

\begin{corollary}\label{gradings_on_sop_infinity}
Let $\FF$ be an algebraically closed field, $\chr{\FF}\ne 2$, $G$ an abelian group. Let $\cL$ be $\So(\infty)$, respectively $\Sp(\infty)$, over $\FF$. Suppose $\cL$ is given a $G$-grading. Then, as a graded algebra, $\cL$ is isomorphic to some $\frB(G,T,\beta,\kappa,g_0)$, respectively $\frC(G,T,\beta,\kappa,g_0)$. No $G$-graded Lie algebra $\frB(G,T,\beta,\kappa,g_0)$ is isomorphic to $\frC(G,T',\beta',\kappa',g_0')$. Moreover,
$\frB(G,T,\beta,\kappa,g_0)$ and $\frB(G,T',\beta',\kappa',g_0')$ are isomorphic if and only if  $T'=T$, $\beta'=\beta$, and there exists $g\in G$ such that $g'_0=g_0 g^{-2}$ and $\kappa'=\kappa^g$. The same holds for $\frC(G,T,\beta,\kappa,g_0)$ and $\frC(G,T',\beta',\kappa',g_0')$. \qed
\end{corollary}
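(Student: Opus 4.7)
The plan is to deduce this corollary directly from Theorem \ref{tsosp} by reducing the invariants $(\tV,\tB,g_0)$ and the equivalence relation of Definition \ref{df:datum_finitary_2} to the purely combinatorial data $(\kappa,g_0)$ in the countable case. Since $\fso(U,\Phi)$ and $\fsp(U,\Phi)$ come from $\cR=M_\infty(\FF)$ equipped with involutions of opposite sign, the combinatorial classification of $(\cR,\vphi)$ provided by Corollary \ref{gradings_on_M_infinity_inv} will be the workhorse.

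First, given a $G$-grading on $\cL=\So(\infty)$ or $\Sp(\infty)$, Theorem \ref{tsosp} presents $\cL$ as the skew-symmetric elements in some $\frF(G,\cD,\tV,\tB,g_0)$ with involution $\vphi$ of sign $\delta=1$ or $\delta=-1$, respectively. In the countable case, Corollary \ref{gradings_on_M_infinity_inv} tells us that up to isomorphism the pair $(\tV,\tB)$ is encoded by the function $\kappa\colon G/T\to\{0,1,2,\ldots,\infty\}$, $\kappa(A)\bydef\dim\tV_A$, with $|\kappa|=\infty$. This is because $\mu_A$ is forced by $\delta$ through \eqref{eq:mu_for_inv}, and over an algebraically closed field of characteristic $\ne 2$, two nondegenerate symmetric (respectively, skew-symmetric) inner-product spaces of the same finite or countable dimension are isomorphic; likewise pairs $(\tV_A,\tV_{g_0^{-1}A^{-1}})$ in nondegenerate duality are determined up to isomorphism by $\dim\tV_A$. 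This yields the existence of the data $(T,\beta,\kappa,g_0)$ presenting $\cL$ as $\frB(G,T,\beta,\kappa,g_0)$ in the orthogonal case and $\frC(G,T,\beta,\kappa,g_0)$ in the symplectic case.

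Next, the $\frB$ versus $\frC$ dichotomy. If $\psi\colon\frB(\ldots)\to\frC(\ldots)$ were a graded Lie isomorphism, by Theorem \ref{tsosp} it would extend to an isomorphism of the ambient associative algebras with involution; but this is impossible because an involution of sign $+1$ cannot be intertwined with one of sign $-1$ (equivalently, $\fso(\infty)$ and $\fsp(\infty)$ are not isomorphic as abstract Lie algebras in the present characteristic, so already their underlying Lie algebras are non-isomorphic). Hence no $\frB$ is isomorphic to any $\frC$.

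Finally, for the isomorphism criterion within a single type, I apply Theorem \ref{tsosp}: the graded algebras with parameters $(T,\beta,\tV,\tB,g_0)$ and $(T',\beta',\tV',\tB',g'_0)$ are isomorphic iff $T'=T$, $\beta'=\beta$, and $(\tV,\tB,g_0)\sim(\tV',\tB',g'_0)$ in the sense of Definition \ref{df:datum_finitary_2}. Since $\delta$ is the same on both sides, the values $\mu_A$ are given by the same formula \eqref{eq:mu_for_inv} (so the condition $\mu_A=\mu'_{gA}$ becomes automatic, using $\kappa(A)=\kappa'(gA)$ and $\tau(A)=\tau(gA)$ once $g'_0=g_0g^{-2}$). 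The surviving content of $(\tV,\tB,g_0)\sim(\tV',\tB',g'_0)$ is therefore the existence of $g\in G$ with $g'_0=g_0g^{-2}$ together with the isomorphisms of inner product spaces $\tV_A\cong\tV'_{gA}$ (when $g_0A^2=T$) or of dual pairs (when $g_0A^2\ne T$), both of which reduce in the countable algebraically closed setting to the dimension equalities $\kappa(A)=\kappa'(gA)$, i.e., $\kappa^g=\kappa'$. The most delicate step, and the one I expect to require the closest verification, is precisely this translation: making sure that the constraint on $\mu$ and the symmetry $\kappa(A)=\kappa(g_0^{-1}A^{-1})$ (forced by nondegeneracy of $\tB$) are preserved and that no extra discrete invariant survives once $\delta$ is fixed.
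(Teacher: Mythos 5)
Your proof is correct and follows essentially the same route as the paper: the corollary is meant to fall out directly from Theorem~\ref{tsosp} combined with the combinatorial classification of graded matrix algebras with involution in Corollary~\ref{gradings_on_M_infinity_inv}, which is exactly what you do, including the key observation that once $\delta=\sgn(\vphi)$ is fixed the function $\mu$ is forced by \eqref{eq:mu_for_inv} (together with the check $\tau(A)=\tau'(gA)$ when $g_0'=g_0g^{-2}$), so the equivalence of Definition~\ref{df:datum_finitary_2} collapses to $\kappa'=\kappa^g$ and $g_0'=g_0g^{-2}$. Only a small notational slip: in the parenthetical you write $\tau(A)=\tau(gA)$ where you mean $\tau(A)=\tau'(gA)$ (the $\tau$ computed from $g_0'$), and the $\kappa(A)=\kappa'(gA)$ inside that same parenthetical is not actually used for $\mu_A=\mu'_{gA}$; neither affects the correctness of the argument.
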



\end{document}